\newtheorem{satz}{Theorem}
\newtheorem{cor}[satz]{Corollary}
\newtheorem{rem}[satz]{Remark}
\newtheorem{defi}[satz]{Definition}
\newtheorem{lem}[satz]{Lemma}
\newtheorem{example}[satz]{Example} 
\newtheorem{examples}[satz]{Examples} 
\newtheorem{prop}[satz]{Proposition}
\def\cx{{\mathcal X}}
\def\N{{\mathbb N}}
\def\R{{\mathbb R}} 
\def\Rd{{\mathbb R}^d}
\def\dist{{\rm dist \, }}
\def\ud {{\rm d}}
\def\nat{{\mathbb N}}
\def\real{{\mathbb R}}
\newcommand{\calk}{{\mathcal K}}
\DeclareMathOperator{\supp}{supp}
\newcommand{\be}{\begin{equation}}
\newcommand{\ee}{\end{equation}}
\newcommand{\beq}{\begin{eqnarray}}
\newcommand{\beqq}{\begin{eqnarray*}}
\newcommand{\eeq}{\end{eqnarray}}
\newcommand{\eeqq}{\end{eqnarray*}}
\title{Properties of Kondratiev spaces} 
\author{Stephan Dahlke\thanks{The work of this author has been supported by Deutsche Forschungsgemeinschaft (DFG), grant Da360/22--1.},
\quad Markus Hansen \thanks{The work of this author has been supported by the ERC, Starting Grant 306274 (HDSPCONTR).},
\quad Cornelia Schneider \thanks{The work of this author has been supported by Deutsche Forschungsgemeinschaft (DFG), grant SCHN 1509/1-1.}\\ 
\quad \& \:  Winfried Sickel} 
\begin{document}

\maketitle

\begin{abstract}  
In this paper, we investigate Kondratiev spaces on domains of polyhedral type. In particular, we will be concerned 
with necessary and sufficient conditions for  continuous and compact embeddings and in addition we shall deal 
with pointwise multiplication in these spaces. 
\end{abstract}

\noindent
{\bf AMS Subject Classification:} 
46E35, %
65C99\\ 

\noindent
{\bf Key Words:} 
Kondratiev spaces, weighted Sobolev spaces,   smooth cones, polyhedral cones, dihedral domains,  domains of polyhedral type, 
continuous and compact embeddings, pointwise multipliers.

\tableofcontents


\section{Introduction}


Since the midsixties  scales of weighted Sobolev spaces have become
popular in the
study of regularity of solutions to elliptic PDEs
on polygonal and polyhedral domains in $\R^2$ and $\R^3$, respectively.
Let us mention the pioneering work of Kondratiev \cite{Kon1}, \cite{Kon2},
see also the survey of Kondratiev and Oleinik \cite{Kon3}.
Later on these types of spaces, partly more general, have been considered by  many
people. Let us mention just a few:
Babu\v{s}ka, Guo \cite{BabuskaGuo}, Bacuta, Mazzucato, Nistor, Zikatanov
\cite{BMNZ},
Dauge \cite{Dauge}, Kozlov, Maz'ya, Rossmann \cite{KMR}, \cite{KMR2},
Kufner, S\"andig \cite{Ku}, Maz'ya, Rossmann \cite{MR},  Mazzucato, Nistor
\cite{NistorMazzucato}, and Nazarov, Plamenevskii \cite{NP}.
Whereas in the mentioned references  the weight was always chosen to be
a power of the distance to the singular set of the boundary, there are also
publications dealing with the weight being a power of
the distance to the whole boundary. We refer e.g. to Kufner, S\"andig
\cite{Ku},
Triebel \cite[3.2.3]{T78} and Lototsky \cite{Lo}.

Kondratiev spaces  provide a very powerful tool in the context of the
qualitative theory of elliptic and parabolic PDEs, especially on
nonsmooth domains.  In particular, on domains with edges and corners,  these
nonsmooth parts of the boundary induce singularities for the solution and
its derivatives.  By means of Kondratiev spaces it is possible to
describe very precisely the behaviour of these singularities. Moreover,
these specific smoothness spaces allow for certain shift theorems in the
following  sense. Suppose that we are given a second order  elliptic
differential equation  on a polygonal or polyhedral domain. Then, under
certain conditions on the coefficients and on the domain, it turns out that  if
the right--hand side has smoothness $m-1$ in the scale of Kondratiev
spaces, then the solution $u$ of the PDE has smoothness $m+1$. We refer   to    
\cite{BMNZ} and {particularly} to  \cite{MR} for further
information. While for  smooth domains similar statements also hold for
classical smoothness spaces such as Sobolev spaces, the situation is
completely different on the nonsmooth domains we are concerned with here.
{In this case} the singularities at the boundary diminish the Sobolev
regularity. Let us {in this context} mention the famous $H^{3/2}$-theorem proved by Jerison and Kenig   
\cite{JK1}, which says that for the
Poisson equation  there exist Lipschitz domains and right-hand sides $f\in
C^{\infty}$ such that the smoothness  of the corresponding solution is
limited by $3/2$.

The above remarks reflect that Kondratiev spaces have been shown to be an
indispensable tool in the theory of elliptic equations,
in particular, on non-convex polyhedral domains.
In this paper, as a first step, we intend to give a survey on what is
known about
these classes
with respect to continuous and compact embeddings.
As a second step we would like to understand
the mappings $u \mapsto u^n$, $n \in \N$, in the framework of these {scales}.
To do this we will allow a greater generality.
{Moreover}, we will give the final answer under which conditions
Kondratiev spaces form algebras with respect to pointwise multiplication.

There is also an interesting   relationship of Kondratiev spaces {with}
important  issues in numerical analysis.  As is well-known, the
approximation order that can be achieved by adaptive and other nonlinear
methods usually depends on the regularity of the exact solutions in scales
of Besov spaces \cite{DD,DahlkeSickel,DahlkeSickel2,Hansen}. Since there exist a lot of embeddings of
Kondratiev spaces into Besov spaces, cf.  \cite{Hansen, Hansen2},
Besov regularity estimates can very often be traced back to regularity
questions in Kondratiev spaces. Therefore, the results presented in this
paper  will be used in a follow-up paper \cite{DHSS1} in order to look at Besov
regularity of solutions to nonlinear elliptic partial differential equations, e.g.  
\beqq
 -\Delta u (x) + u^n (x) & = &  f(x)\, , \qquad x \in \Omega, \quad n>2,
\\
u(x) &= & 0 \, , \qquad \quad \  x \in \partial \Omega\, .
\eeqq

The paper is organized as follows.
In Section \ref{Kond} we shall give the definition of the scales
$\calk^m_{a,p}(\Omega,M)$.
There we also discuss in detail in which types of domains we are
interested in, cf. Subsection \ref{domains}.
The next section is {then} devoted to the study of necessary and sufficient
conditions for continuous and compact embeddings of Kondratiev spaces.
In Section \ref{pm} we discuss pointwise multipliers for Kondratiev
spaces in great detail. Whereas other parts of the paper have the character of a survey, 
the contents of this section are completely new.
Firstly, we  investigate under which conditions on the parameters
$m,p$, and $a$
a space  $\calk^m_{a,p}(\Omega,M)$ forms an algebra with respect to pointwise
multiplication.
Secondly, under certain conditions on the parameters, we also deal 
with the more general case of products of the form $\calk^{m_1}_{a_1,p_1}(\Omega,M) \, \cdot\,  \calk^{m_2}_{a_2,p_2}(\Omega,M)$.
\\
In almost all cases  the following strategy {is used}.
In a first step we deal with the {corresponding} problem for simplified Kondratiev spaces defined 
on $\calk^{m}_{a,p} (\R^d,\R^\ell_*)$. 
Afterwards, {using} linear and continuous extension operators, we  extend the obtained results 
to Kondratiev spaces defined on smooth cones, nonsmooth cones and specific dihedral domains, see Cases I-III on pages \pageref{smoothconekondratiev}-\pageref{Kondratievdihedral} 
below.
In a third step, by making use of a simple decomposition, we are able to handle 
Kondratiev spaces defined on polyhedral cones, see Case IV on page \pageref{def-norm-poly-cone}.
Furthermore, the   decomposition from Lemma \ref{deco} allows us to extend everything to so-called domains of polyhedral 
type. Not all interesting examples are covered by this notion.
Therefore, we proceed  in the last step with a further generalization, based on Lemma \ref{deco3}.
This yields the results for Kondratiev spaces  defined on  pairs $(D,M)$  of generalized polyhedral type.

Some final remarks concerning the choice of our weighted Sobolev spaces are in order.  
In our setting, see Definition \ref{def-kondratiev} of Subsection \ref{sub1}, all derivatives that 
occur are weighted by some power of  the distance to the singularity set, where the power depends on 
the order $\alpha$ of the corresponding derivative. This is of course not the only possible choice. Indeed, 
several authors worked with the scale  $J_{\gamma}^m$, where the power does {\em not} depend on $\alpha$. 
Let us just mention the work   of Babu\v{s}ka and Guo [3] and Costabel, Dauge and Nicaise \cite{CDN2}  (this list is clearly 
not complete).  It is sometimes claimed that the scale $J_{\gamma}^m$ is more versatile in order to describe the global 
regularity of solutions of PDEs. And indeed, these spaces have  the  advantage that for large enough $m$  they may 
contain all polynomials, which is not true in our case. Consequently,  based on (intersections of) these spaces, Babu\v{s}ka and 
Guo \cite{BabuskaGuo} and Guo  \cite{Guo}  have been able to show exponential convergence of  $hp$-versions of Finite
 Element Methods. However, for our purposes, the Kondratiev scale as introduced in Section 2 is more suitable for 
several reasons. In particular,  (complex) interpolation of these spaces is  much simpler and the desired embeddings into scales of 
Besov spaces also arise very naturally.


\section{Kondratiev spaces}
\label{Kond}


Let us start by collecting some general notation used throughout the paper.

As usual, $\N$ stands for the set of all natural numbers, $\N_0=\mathbb N\cup\{0\}$, and 
$\Rd$, $d\in\N$, is the $d$-dimensional real Euclidean space with $|x|$, for $x\in\Rd$, denoting the Euclidean norm of $x$. 
Let $\N_0^d$, where $d\in\N$, be the set of all multi-indices, $\alpha = (\alpha_1, \ldots,\alpha_d)$ with 
$\alpha_j\in\N_0$ and $|\alpha| := \sum_{j=1}^d \alpha_j$. 

Furthermore, $B_{\varepsilon}(x)$  is the open ball 
of radius $\varepsilon >0$ centred at $x$. 

We denote by  $c$ a generic positive constant which is independent of the main parameters, but its value may change from line to line. 
The expression $A\lesssim B$ means that $ A \leq c\,B$. If $A \lesssim
B$ and $B\lesssim A$, then we write $A \sim B$.  

Given two quasi-Banach spaces $X$ and $Y$, we write $X\hookrightarrow Y$ if $X\subset Y$ and the natural embedding is bounded.

A domain $\Omega$ is an open bounded set in $\R^d$. 
Let $L_p(\Omega)$, $1\leq p\leq \infty$, be the Lebesgue spaces on $\Omega$ as usual.  
Furthermore, for $m \in \N$ and  $1\le p \leq \infty$, we denote by
 $W^{m}_p(\Omega)$ the standard Sobolev space  on the domain $\Omega$ equipped with the norm
\[
\|\, u \, |W^m_{p}(\Omega)\| := \Big(\sum_{|\alpha|\leq m} \int_\Omega |\partial^\alpha u(x)|^p\,dx\Big)^{1/p}\, 
\]
(with the usual modification if $p=\infty$). 
If $p=2$ we shall also write $H^m(\Omega)$ instead of $W^m_2(\Omega)$.


\subsection{Definition and basic properties}
\label{sub1}


\begin{defi} \label{def-kondratiev}
Let $\Omega$ be a  domain in $\R^d$  and let  $M$ be a non-trivial closed subset of its boundary ${\partial \Omega}$.
Furthermore, 
let  $m\in\N_0$ and $a\in\R$. 
We put 
\be\label{gewicht} 
\rho(x):= \min\{1, \dist(x,M)\} \, , \qquad x \in \Omega\, .
\ee
{\rm (i)} Let $1\le p < \infty$.
We define the Kondratiev spaces $\calk^m_{a,p}(\Omega, M)$ as the collection of all measurable functions which 
admit $m$ weak derivatives in $\Omega$ satisfying
\[
\|u|\calk^m_{a,p}(\Omega,M)\| := \Big(\sum_{|\alpha|\leq m}\int_\Omega
|\rho(x)^{|\alpha|-a}\partial^\alpha u(x)|^p\,dx\Big)^{1/p}
<\infty\,.
\]
{\rm (ii)} The space $\calk^m_{a,\infty}(\Omega, M)$ is the collection of all measurable functions which 
admit $m$ weak derivatives in $\Omega$ satisfying
\[
\|u|\calk^m_{a,\infty}(\Omega,M)\| := \sum_{|\alpha|\leq m} \|\, \rho^{|\alpha|-a} \partial^\alpha u \, |L_\infty (\Omega)\|
<\infty\,.
\]
\end{defi}

\begin{rem} 
\rm
{\rm (i)}
Many times  the set $M$ will be the {\em singularity set} $S$ of the domain $\Omega$,  i.e., 
the set of all points $x \in \partial \Omega$ such that  for any $\varepsilon >0$  the set
$\partial \Omega \cap B_{\varepsilon}(x)$ is not smooth. 
\\
{\rm (ii)} We will not distinguish spaces which differ by an equivalent norm.
\\
\end{rem}

\subsection*{Basic properties}

We collect basic properties of Kondratiev spaces that will be useful in what follows.  

\begin{itemize}
\item
$\calk^m_{a,p}(\Omega,M)$ is a Banach space, see \cite{KO1}, \cite{KO2}.

\item The scale of Kondratiev spaces is monotone in $m$ and $a$, i.e.,
\begin{equation}\label{kondr-emb}
\calk^m_{a,p}(\Omega,M) \hookrightarrow \calk^{m'}_{a,p}(\Omega,M) \qquad \mbox{and} \qquad 
\calk^m_{a,p}(\Omega,M) \hookrightarrow \calk^m_{a',p}(\Omega,M)
\end{equation}
if $m' < m$ and $a' < a$.

\item Regularized distance function: There exists a function $\tilde{\varrho}: ~ \overline{\Omega} \to [0,\infty)$, which is infinitely often 
differentiable in $\Omega$, and   
positive constants $A,B,C_\alpha$ such that 
\[
 A \, \rho (x) \le \tilde{\varrho}(x)\le B\, \rho (x) \, , \qquad x \in \Omega \, ,
\]
and, for all $\alpha \in \N_0^d$, 
\[
\Big| \partial^\alpha \, \tilde{\varrho}(x) \Big|\le C_\alpha \rho^{1-|\alpha|} (x)\, , \qquad x \in \Omega \, ,
\]
see Stein \cite[Thm.~VI.2.2]{St70} (the construction given  there is valid for arbitrary closed subsets of $\R^d$).

\item \label{isom-prop}
By using the previous item and replacing $\rho$ by $\tilde{\varrho}$ in the norm of $\calk^m_{a,p}(\Omega,M)$ 
one can prove the following.\\
Let $b \in \R$.
Then the  mapping $T_b : ~ u \mapsto \tilde{\varrho}^b \, u $ yields an isomorphism  of $\calk^m_{a,p}(\Omega,M)$ onto $\calk^m_{a+b,p}(\Omega,M)$. 

\item 
Let $a\ge 0$. Then $\calk^m_{a,p}(\Omega,M) \hookrightarrow L_{p}(\Omega)$.

\item 
A function $\psi:~ \Omega \to \R $ such that the ordinary derivatives $\partial^\alpha \psi$ are  continuous functions on $\Omega$
for all $\alpha$, $|\alpha|\le m$, 
\[
 \| \psi|C^m (\Omega)\|:= \max_{|\alpha|\le m} \, \sup_{x \in \Omega} |\partial^\alpha \psi (x)| < \infty\, , 
\]
is a pointwise multiplier for $\calk^m_{a,p}(\Omega,M)$, i.e., $\psi \,\cdot \,  u \in \calk^m_{a,p}(\Omega,M) $ for all 
$u \in \calk^m_{a,p}(\Omega,M)$.

\item Let $1\le p < \infty$. For a given domain $\Omega$ define
$$\Omega_\delta=\{x\in \Omega: ~ \dist (x,M)>\delta\},\quad\delta>0.$$
We assume that $\Omega_\delta$ has the segment property for all $\delta\leq\delta_0$ for some sufficiently small $\delta_0>0$; further suppose
\[
\lim_{\delta \downarrow 0}\,  |\{x\in \Omega: ~ \dist (x,M)\leq\delta\}|
	=\lim_{\delta \downarrow 0}\,|\Omega\setminus\Omega_\delta|=0\, .
\]
Then $C_*^\infty (\Omega,M)=\{u|_\Omega:u\in C_0^\infty(\R^d\setminus M)\}$ is a dense subset of
$\calk^m_{a,p}(\Omega,M)$.

This is the weighted counterpart of \cite[Theorem 4.7]{EdmundsEvans}. We shall sketch the argument: Due to the assumption on $\Omega$ and $M$, for any given fixed $u\in\calk^m_{a,p}(\Omega,M)$ and
$\varepsilon>0$ by the absolute continuity of the Lebesgue integral we can find a $\delta>0$ such that
$$\sum_{|\alpha|\leq m}\int_{\Omega\setminus\Omega_\delta}
	|\rho(x)^{|\alpha|-a}\partial^\alpha u(x)|^p\,dx
	<\varepsilon.$$
On the other hand, we have
$$\sum_{|\alpha|\leq m}\int_{\Omega_\delta}
	|\rho(x)^{|\alpha|-a}\partial^\alpha u(x)|^p\,dx
	\sim\|u|W^m_p(\Omega_\delta)\|^p,$$
with constants depending only on $\delta$, $m$, $a$ and $p$. The result now follows by applying the unweighted density result \cite[Theorem 4.7]{EdmundsEvans} to the domain $\Omega_\delta$.
\end{itemize}


\subsection{Domains of polyhedral type}
\label{domains}


In the sequel, we will mainly be interested in the case that  $d$ is either $2$ or $3$ and that  $\Omega$ is a bounded domain of polyhedral type. 
The precise definition will be given below in Definition \ref{standard}. 
Essentially, we will consider domains for which the analysis of the associated Kondratiev spaces can be reduced to the 
following four  basic cases:
\begin{itemize}
 \item Smooth cones;
 \item Specific nonsmooth cones;
 \item Specific dihedral domains;
 \item Polyhedral cones.
\end{itemize}
Let $d\ge 2$.
As usual, an infinite  smooth  cone with vertex at the origin is the set
\[
 K:= \{x\in \Rd: \: 0 < |x|< \infty\, , \: x/|x| \in \Omega\}\, , 
\]
where $\Omega$ is a subdomain of the unit sphere $S^{d-1}$ in $\Rd$ with $C^\infty$ boundary.\\
{~}\\

\noindent 
\begin{minipage}{0.4\textwidth}
{\bf Case I:} {\em Kondratiev spaces on smooth cones.}  
Let $K'$ be an infinite  smooth cone contained in $\R^d$ with vertex at the origin 
which is  rotationally invariant with respect to the axis $\{(0,\ldots,0, x_d):~ x_d \in \R\}$. Let  $M:=\{0\}$.
Then we define the truncated cone $K$  by \\
 \[
K:= K'\cap B_1(0)
\] 
and put
\end{minipage}\hfill \begin{minipage}{0.5\textwidth}
\includegraphics[width=6cm]{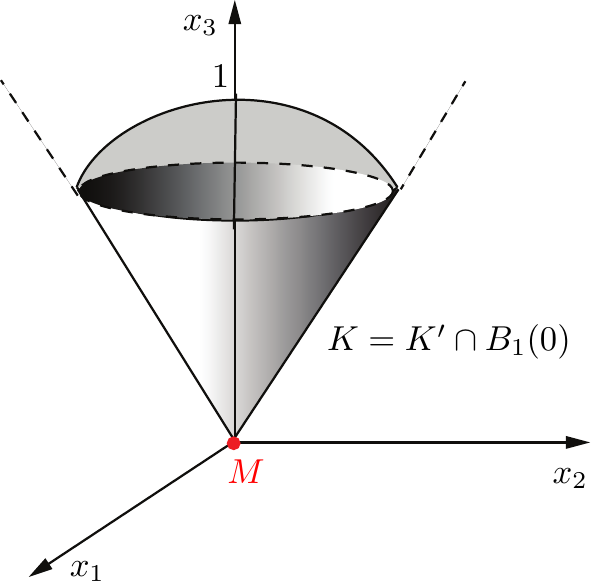}
\end{minipage}
\begin{equation} \label{smoothconekondratiev}
\|u|\calk^m_{a,p}(K,M)\| :=
 \Big(\sum_{|\alpha|\leq m}\int_{K}|\,  |x|^{|\alpha|-a}\partial^\alpha u(x)|^p\,dx\Big)^{1/p} \, .
\end{equation}
There is still one degree of freedom in the choice of the smooth cone, namely the opening angle $\gamma \in (0,\pi)$ of the cone. 
Since this will be unimportant in what follows we will not indicate this in the notation.
Observe that $M$ is just a part of the singular set of the boundary of {the truncated cone} $K$.
\\

\noindent
\begin{minipage}{0.45\textwidth}
{\bf Case II:} {\em Kondratiev spaces on specific nonsmooth cones}. 
Let again $K'$ denote a rotationally  symmetric cone as described in  {\bf Case I} with opening angle $\gamma \in (0,\pi)$. 
Then we define the specific nonsmooth cone $P$  by  $P=K'\cap I$, where $I$ denotes the unit cube
\be\label{ws-01} 
I :=\{x \in \R^d:~0<x_i<1,~i=1,\ldots,d\}.
 \ee
 \end{minipage}\hspace{0.5cm} \begin{minipage}{0.5\textwidth}
\quad \includegraphics[width=6.5cm]{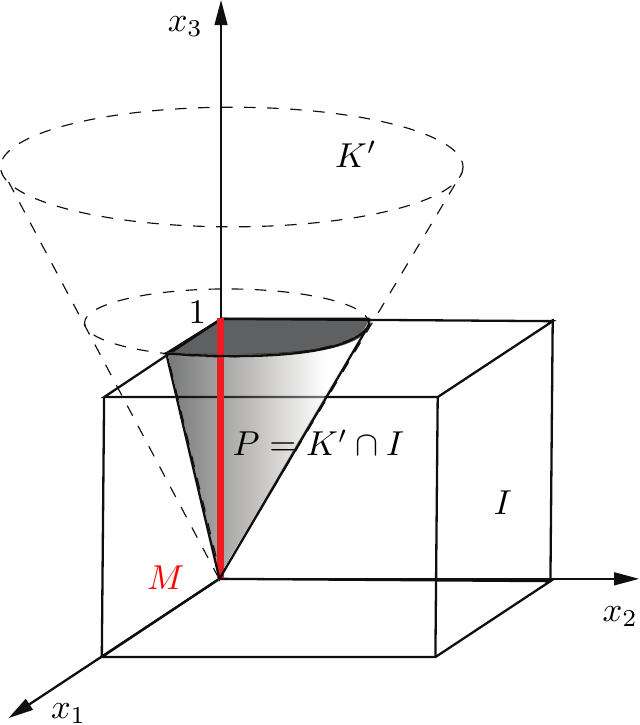}
\end{minipage}
\\
{~}\\
\noindent
In this case, we choose  $\Gamma : = \{x\in \R^d:~ x=(0,\ldots,0, x_d),~0 \le x_d \le 1\}$ and define
\begin{equation} 
\|u|\calk^m_{a,p}(P,\Gamma)\|:=
\Big(\sum_{|\alpha|\leq m}\int_{P}|\,  \rho(x)^{|\alpha|-a}\partial^\alpha u(x)|^p\,dx\Big)^{1/p}\, , 
\end{equation}
where $\rho(x)$ denotes the distance of $x$ to $\Gamma$, i.e.,  $\rho (x) = |(x_1,\ldots,x_{d-1})|.$
Again the opening angle $\gamma$ of the cone $K'$ will be of no importance.
Also in this case the set $\Gamma$ is a proper subset of the singular set of $P$.
\\

\noindent
\begin{minipage}{0.5\textwidth}
{\bf Case III:} {\em Kondratiev spaces on specific dihedral domains}.  
\\
Let $1\le \ell < d$ and let $I$ be the unit cube defined in \eqref{ws-01}.
For $x \in \R^d$ we write 
\[
x=(x',x'') \in \R^{d-\ell}\times \R^{\ell},
\] 
where $x':= (x_1, \ldots \, , x_{d-\ell})$ and
\mbox{$x'':= (x_{d-\ell + 1}, \ldots \, , x_{d})$}. Hence 
$I = I' \times I''$ with the obvious interpretation. Then we choose
 \end{minipage}\hfill \begin{minipage}{0.4\textwidth}
\includegraphics[width=6cm]{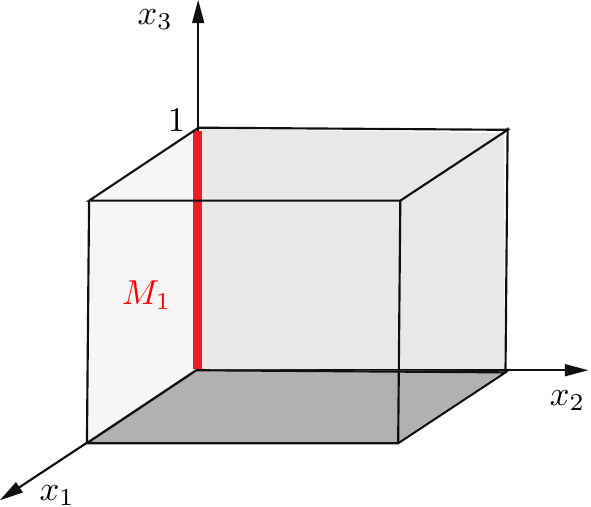}
\end{minipage}
\be\label{ws-15}
M_\ell := \{x \in \partial I:~x_1 = \ldots = x_{d-\ell }=0\} 
\ee
and define
\begin{equation} \label{Kondratievdihedral}
\|u|\calk^m_{a,p}(I, M_\ell) \|
:= \Big(\sum_{|\alpha|\leq m}\int_{I}|\,  |x'|^{|\alpha|-a}\partial^\alpha u(x)|^p\,dx\Big)^{1/p}\, .
\end{equation}

\noindent
\begin{minipage}{0.5\textwidth}
{\bf Case IV:} {\em Kondratiev spaces on  polyhedral cones}. 
Let $ K'$ 
be an infinite cone in $\R^3$, more exactly, in the half space $x_3 >0$ with vertex at the origin. We assume that the boundary $\partial K'$ 
consists of the vertex $x=0$, the edges (half lines) $M_1, \ldots \, , M_n$, and smooth faces $\Gamma_1, \ldots , \Gamma_n $.
This means $\Omega := K' \cap S^{2}$ is a domain of polygonal type on the unit sphere with sides 
$\Gamma_k\cap S^2$.
We put 
\[
Q:= K' \,  \cap \, \{x \in \R^3: ~ 0 < x_3 < 1\}\, .
\]
\end{minipage}\hfill \begin{minipage}{0.45\textwidth}
\quad \includegraphics[width=6cm]{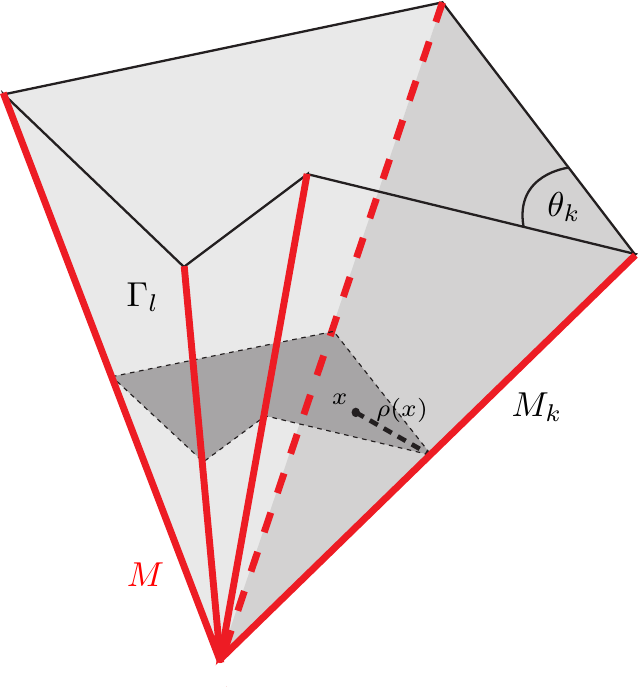}
\end{minipage}\\
{~}\\
 In this case, we choose  $M: = (M_1 \cup \ldots \cup M_n)\cap\overline{Q}$ and define
\begin{equation} \label{def-norm-poly-cone}
\|u|\calk^m_{a,p}(Q,M)\|:=
\Big(\sum_{|\alpha|\leq m}\int_{Q}|\,  \rho(x)^{|\alpha|-a}\partial^\alpha u(x)|^p\,dx\Big)^{1/p}\, ,
\end{equation}
where $\rho(x)$ denotes the distance of $x$ to $M$. 
\\

Based on these four cases, we define the specific domains we will be concerned with in this paper.

\begin{defi} \label{standard} 
Let  $D$ be a domain in $\R^d$, $d\geq 2$, with singularity set $S$. 
Then $D$ is of {\em polyhedral type}, if there exists a finite covering $(U_i)_i$ of bounded open sets 
such that 
$$ 
\overline{D}
	\subset \Big(\bigcup_{i \in \Lambda_1} U_i\Big)
		\cup\Big( \bigcup_{j \in \Lambda_2} U_{j} \Big) 
		\cup\Big( \bigcup_{k \in \Lambda_3} U_{k}\Big)
		\cup\Big( \bigcup_{\ell \in \Lambda_4} U_{\ell}\Big)\,,
$$
where
\begin{itemize}
\item[i)]  $i \in \Lambda_1$ if $U_i$ is a ball and  $\overline{U_i} \cap S = \emptyset$.

\item[ii)] $j\in \Lambda_2$ if there exists a $C^{\infty}$-diffeomorphism 
$\eta_{j}~:~\overline{U_{j}}\longrightarrow \eta_{j}(\overline{U_{j}})\subset \R^d$ such that  
$\eta_{j}(U_{j} \cap D)$ is the  smooth cone $K$ as described in {\bf Case I}. 
Moreover, we assume that  for all $x \in U_{j} \cap D$ the distance to $S$ 
is equivalent to the distance to the point $x^{j} := \eta_{j}^{-1}(0).$

\item[iii)] $k \in \Lambda_3$ if there exists a $C^{\infty}$-diffeomorphism  
$\eta_{k}~:~\overline{U_{k}} \longrightarrow \eta_{k}(\overline{U_{k}})\subset  \R^d$ 
such that $\eta_{k}(U_{k}\cap D)$ is the nonsmooth   cone $P$ as described in {\bf Case II}. 
Moreover, we assume that for all $x\in U_{k}\cap D$ the distance to  
$S$ is equivalent to the distance to the set $\Gamma^{k}:= \eta_{k}^{-1}(\Gamma)$. 

\item[iv)]$ \ell \in \Lambda_{4}$  if there exists a $C^{\infty}$-diffeomorphism 
$\eta_{\ell}: \overline{U_{\ell}} \longrightarrow  \eta_{\ell}(\overline{U_{\ell}}) \subset\R^d$ such that
$\eta_{\ell}(U_{\ell}\cap D)$  is a specific dihedral domain  as described in {\bf Case III}. 
Moreover, we assume that for all $x\in U_{\ell}\cap D$ the distance to  $S$ is equivalent to the distance to 
the set $M^{\ell}:= \eta_{\ell}^{-1}(M_{n})$ for some $n \in \{1, \ldots \, , d-1\}$.
\end{itemize}

\noindent
In particular,  when $d=3$ we permit another type of subdomain: then 
$$ 
\overline{D}
	\subset \Big(\bigcup_{i \in \Lambda_1} U_i\Big)
		\cup\Big( \bigcup_{j \in \Lambda_2} U_{j} \Big) 
		\cup\Big( \bigcup_{k \in \Lambda_3} U_{k}\Big)
		\cup\Big( \bigcup_{\ell \in \Lambda_4} U_{\ell}\Big)
		\cup\Big( \bigcup_{m \in \Lambda_5} U_{m}\Big)\, ,
$$
where
\begin{itemize}
\item[v)]$ m \in \Lambda_{5}$  if there exists a $C^{\infty}$-diffeomorphism 
$\eta_{m}: \overline{U_{m}} \longrightarrow  \eta_{m}(\overline{U_{m}}) \subset\R^3$ such that
$\eta_{m}(U_{m}\cap D)$  is a polyhedral cone  as described in {\bf Case IV}. 
Moreover, we assume that for all $x\in U_{m}\cap D$ the distance to  $S$ is equivalent to the distance to 
the set $M'_m := \eta_{m}^{-1}(M)$.

\end{itemize}
\end{defi}

\begin{rem}\label{rem-def-domain}
\rm
(i)
In the literature many different types of polyhedral domains are considered. As will be discussed below, in our context only 
the Cases I and III are essential. Therefore, our definition coincides with the one of Maz'ya and Rossmann \cite[4.1.1]{MR}. 
Further variants can be found in 
Babu\v{s}ka,  Guo \cite{BabuskaGuo},  Bacuta, Mazzucato, Nistor, Zikatanov \cite{BMNZ} 
and Mazzucato, Nistor \cite{NistorMazzucato}.
\\
(ii)
While the types of polyhedral domains coincide, in \cite{MR} more general weighted Sobolev spaces on those polyhedral 
domains are discussed. Our spaces $\calk^m_{a,p}(D,S)$  coincide with the classes $V^{\ell,p}_{\beta,\delta} (D)$ if $m= \ell$, 
\[
\beta = (\beta_1, \ldots\, , \beta_{d'})= (\ell-a, \ldots \, , \ell-a) \qquad \mbox{and}\qquad    \delta = (\delta_1, \ldots\, , \delta_{d}) = 
( \ell - a, \ldots \, ,\ell - a )\, .
\] 
For the meaning of $d$ and $d'$ we refer to \cite[4.1.1]{MR}.
\end{rem}

The definition contains some redundancies. 
It is rather apparent that Cases II and III essentially coincide. A little less obvious (though via still quite basic geometric 
arguments) it can be seen that also Case IV can be reduced to Cases I and II (see Lemma \ref{deco2} below). However, this simple domain 
covering and the resulting norm decomposition are not applicable to every situation: the  method does not allow the usage of 
a resolution of unity with compactly supported functions within the subdomains as discussed in Lemma \ref{deco} -- while a 
finite cover can be given, the compact supports of the functions from the resolution of unity prevent from getting arbitrarily 
close to the vertex of the polyhedral cone. Alternatively, one has to specifically include  a neighbourhood of that vertex, 
on which the distance function is neither equivalent to the distance to an edge nor to the distance to the vertex. Fortunately, 
it turns out that the results presented in this article can be proven without the decomposition result from Lemma \ref{deco}.
One situation where the usage of a resolution of unity as in Lemma \ref{deco} is necessary arises when considering extension 
operators. This will be discussed in a forthcoming publication \cite{Hansen3}.
Despite this redundancy we still decided to include polyhedral cones in the above definition since they 
represent an important special case and, moreover, a number of results can be proved directly for such cones.  
This makes a reduction to other cases unnecessary and the presentation itself more  accessible.

\begin{examples}

Examples of numerically interesting domains, which are of polyhedral type and are therefore covered by our 
investigations, are $L$-shaped domains and 'the donut' in 2D. Moreover,  
in 3D one could mention the Fichera corner and the {icosahedron}.\\

\begin{minipage}{0.5\textwidth}
\includegraphics[width=5cm]{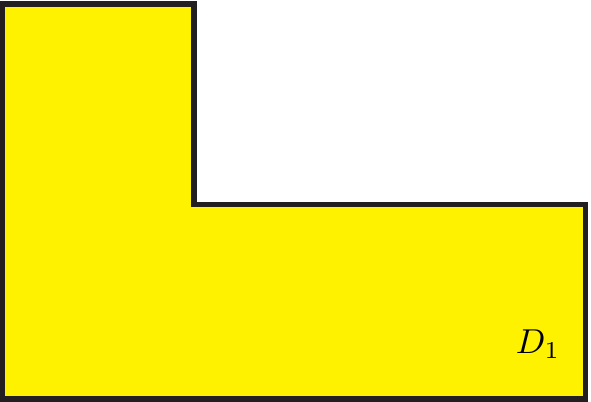}
\end{minipage}\hfill \begin{minipage}{0.5\textwidth}
\includegraphics[width=4.5cm]{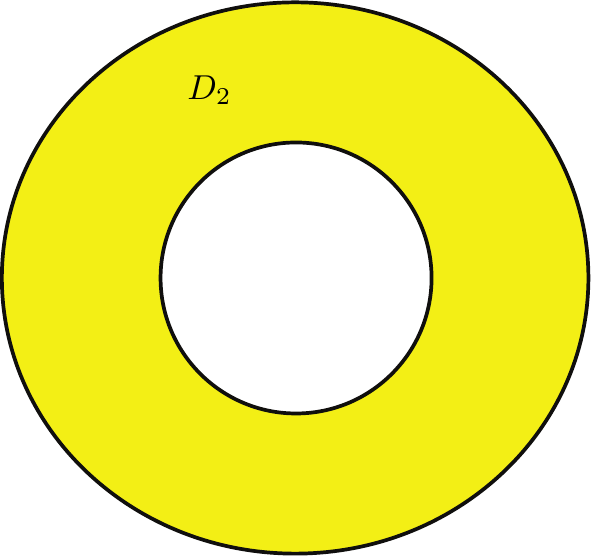}
\end{minipage}\\[0.2cm] 

\begin{minipage}{0.5\textwidth}
\includegraphics[width=5cm]{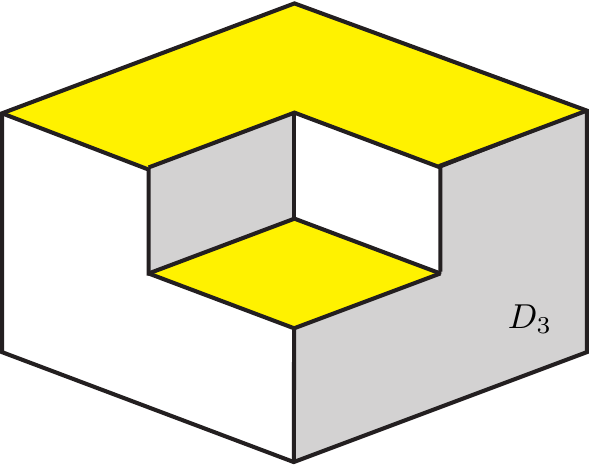}
\end{minipage}\hfill \begin{minipage}{0.5\textwidth}
\includegraphics[width=5cm]{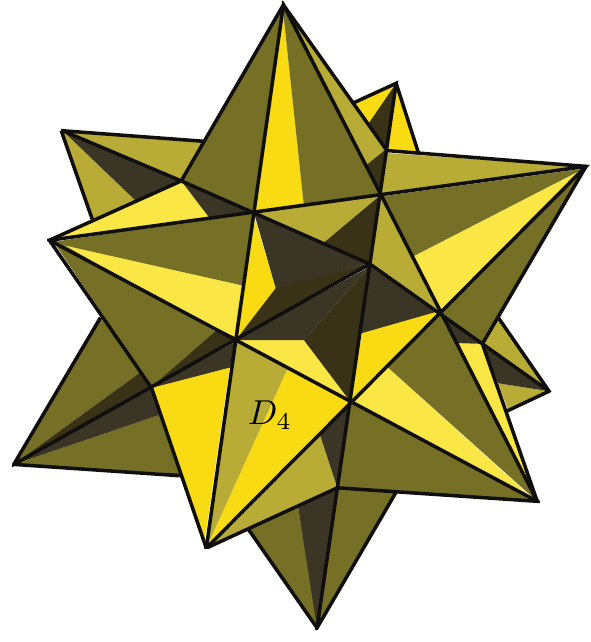}
\end{minipage}\\

Whereas the decomposition of the {icosahedron} into polyhedral cones and a smooth domain is obvious, 
we add a remark concerning $D_3$.
An open neighbourhood of the Fichera corner can be obtained as the image of a diffeomorphic map 
applied to a polyhedral cone (alternatively we could have defined polyhedral cones with an opening angle larger than $\pi$).
Finally, domains with slits and cusps as indicated below are  excluded by Definition \ref{standard} and not of polyhedral type.\\ 

\begin{minipage}{\textwidth}
\begin{center}\includegraphics[width=7cm]{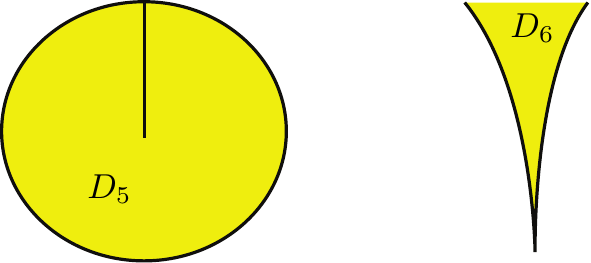}\end{center}
\end{minipage}\\

\end{examples}

\begin{rem}
	In the literature scales of weighted Sobolev spaces are also discussed on far more general domains.
	Exemplary, let us mention the work of Schrohe and Schulze \cite{SS1,SS2}. The authors discuss 
	pseudodifferential operators on manifolds with conical singularities, that is, topological spaces
	$X\times[0,\infty)/X\times\{0\}$ with $X$ being a smooth $n$-dimensional compact manifold. In this
	context, with the help of local coordinates $(x,t)$, spaces $H^{s,\gamma}$ can be defined to contain
	functions for which $t^{\frac{n}{2}-\gamma}(t\partial_t)^k\partial^\alpha_x u(x,t)\in L_2$ for all
	$k+|\alpha|\leq s\in\N$. In case of a smooth cone as in Case I (i.e. $X$ being a smooth submanifold
	of $\mathbb{S}^n$) the coordinate $t$ is equivalent to the weight function $\rho$, the (euclidean)
	distance to the	origin. Moreover, derivatives $\partial^\alpha_x u$ w.r.t. the spherical coordinates
	in $X$ can be expressed in terms of $\rho^{|\beta|}\partial^\beta u$ w.r.t. the standard cartesian
	coordinates. In other words, on smooth cones and for $s\in\N$ those spaces $H^{s,\gamma}$ correspond
	to Kondratiev spaces $\calk^s_{\gamma-\frac{n}{2},p}$. However, note that the general definition of
	the spaces $H^{s,\gamma}$ immediately allows for fractional smoothness parameters, i.e. $s\in\R$.
\end{rem}

We continue with a few well-known properties of Kondratiev spaces.

\begin{lem}\label{diffeo}
Let $D$ be a domain of polyhedral type with singularity set $S$.
The space  $\calk^m_{a,p}(D,S)$ is invariant under $C^\infty$ diffeomorphisms, i.e.,
if $\eta: D \to U$ denotes a $C^\infty$ diffeomorphism, then the function 
$u:~ D \to \R$ belongs to  $\calk^m_{a,p}(D,S)$ if, and only if,  
the function 
$u \circ \eta^{-1}:~ U \to \R$ belongs to  $\calk^m_{a,p}(U,\eta(S))$. Furthermore, 
$\|\, u\, |\calk^m_{a,p}(D,S)\|$ and $\|\, u\circ \eta^{-1}\, |\calk^m_{a,p}(U,\eta(S))\|$ are equivalent.
\end{lem}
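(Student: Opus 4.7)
\medskip

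The plan is to reduce the statement to a combination of three standard ingredients: the chain rule (in the form of Fa\`a di Bruno's formula), change of variables in the integral, and a comparison of the distance functions $\rho_D(x) = \min\{1,\dist(x,S)\}$ and $\rho_U(y) = \min\{1,\dist(y,\eta(S))\}$. Since $\eta$ is a $C^\infty$-diffeomorphism and $D$ is bounded, $\eta$ (and $\eta^{-1}$) extend smoothly to $\overline{D}$ respectively $\overline{U}$, and by compactness all derivatives of $\eta^{\pm 1}$ up to any fixed order, as well as the Jacobian determinant and its inverse, are bounded on the respective closures.

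First I would apply Fa\`a di Bruno's formula to write, for any multi-index $\alpha$ with $|\alpha|\le m$,
\[
\partial^\alpha (u\circ\eta^{-1})(y) = \sum_{1\le |\beta|\le |\alpha|} (\partial^\beta u)(\eta^{-1}(y))\, P_{\alpha,\beta}(y),
\]
where $P_{\alpha,\beta}$ is a polynomial in derivatives of the components of $\eta^{-1}$ up to order $|\alpha|$. By the boundedness of these derivatives on $\overline{U}$, there is a constant $C$ depending only on $m$ and $\eta$ such that $|P_{\alpha,\beta}(y)|\le C$ for all $y\in U$ and all relevant $\alpha,\beta$. Next, performing the substitution $y=\eta(x)$ in the integral defining $\|u\circ\eta^{-1}|\calk^m_{a,p}(U,\eta(S))\|$ introduces the factor $|\det D\eta(x)|^{-1}$, which, again by compactness, is bounded from above and below by positive constants.

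The key geometric step is the equivalence $\rho_U(\eta(x))\sim \rho_D(x)$ for $x\in D$. Since $\eta$ extends to a $C^1$-map on $\overline{D}$, it is Lipschitz on $\overline{D}$ with some constant $L$, so that $\dist(\eta(x),\eta(S))\le L\,\dist(x,S)$; applying the same argument to $\eta^{-1}$ gives the reverse inequality. Truncating by $1$ and using that $D,U$ are bounded so that the distances to $S$, respectively $\eta(S)$, are uniformly bounded, one obtains $\rho_U(\eta(x))\sim\rho_D(x)$ with constants depending only on $\eta$.

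Combining these three ingredients yields, with $y=\eta(x)$,
\[
|\rho_U(y)^{|\alpha|-a}\partial^\alpha(u\circ\eta^{-1})(y)|
\;\lesssim\;\sum_{|\beta|\le|\alpha|} \rho_D(x)^{|\alpha|-a}|(\partial^\beta u)(x)|.
\]
Since $\rho_D(x)\le 1$ and $|\alpha|\ge|\beta|$, we may replace $\rho_D(x)^{|\alpha|-a}$ by $\rho_D(x)^{|\beta|-a}$ on the right-hand side at the cost of a multiplicative constant. Raising to the $p$-th power, integrating, and using the Jacobian bound then gives $\|u\circ\eta^{-1}|\calk^m_{a,p}(U,\eta(S))\|\lesssim\|u|\calk^m_{a,p}(D,S)\|$, and the reverse inequality follows by symmetry, swapping the roles of $\eta$ and $\eta^{-1}$.

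I do not expect a genuine obstacle: the only point that requires a moment of care is the weight comparison near the singularity set, i.e.\ verifying that $\eta$ is truly bi-Lipschitz up to the boundary so that distances to $S$ and $\eta(S)$ are comparable; this however is immediate from the $C^\infty$ assumption on $\eta$ together with the compactness of $\overline{D}$. The case $p=\infty$ is handled by the analogous pointwise argument, omitting the change-of-variables step.
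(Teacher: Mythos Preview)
Your proof is correct and follows essentially the same approach as the paper: Fa\`a di Bruno for the chain rule, the bi-Lipschitz comparison of the distance functions via compactness of $\overline{D}$, and bounded Jacobian for the change of variables. The paper organizes the argument a bit differently---it first treats the standard Cases I--IV, localizes near the singular set via a finite Heine--Borel cover, and invokes the unweighted invariance result of Adams on the complement---whereas your direct global estimate using $\rho_D^{|\alpha|-a}\le\rho_D^{|\beta|-a}$ for $|\alpha|\ge|\beta|$ and $\rho_D\le 1$ achieves the same end more succinctly.
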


\begin{proof}
 For convenience of the reader we give a proof. For unweighted Sobolev spaces such a result is well-known, we refer to 
Adams \cite[Thm.~3.35]{Ada}.
 \\
{\em Step 1.} For the time being  we restrict ourselves to the standard situations described in Case I - Case IV.
Recall that in these specialized situations we do not deal with the distance to the associated singularity set.
We need a common notation. Let $(R,N)$ refer to one of the above four cases.
We shall need a geometrical property of the underlying domain.
Concentrating on Case I - Case IV,  it is  obvious that there exists some $\varepsilon >0$ such that for all $x \in N$ and all 
$y \in B_\varepsilon (x) \cap R$ the lines connecting $x$ and $y$ are contained in $R$. 
Let $\eta = (\eta_1, \ldots , \eta_d)$. For all such pairs $x$ and $y$ it follows 
\[
 |\eta (x) - \eta (y) |\le \Big(\sup_{\xi \in D} \, \max_{j,i=1, \ldots\, d}\,  \Big|\frac{\partial}{\partial \xi_j} \eta_i (\xi)\Big|\Big) \, |x-y|\, .
\]
Of course, $\overline{R}$ and it's image $\overline{U}= \eta (\overline{R})$ are compact.
Hence, there must  exist a constant $C_\eta>0$ such that 
\be\label{ws-05}
 \frac{1}{C_\eta}\, |x-y| \le |\eta (x) - \eta (y) | \le C_\eta \, |x-y| \, , \qquad x,y \in \overline{R}\, .  
\ee
Let  $\tau:= \eta^{-1}$.
The Faa di Bruno formula for derivatives of the composition, cf.  \cite[Theorem 3.4]{CS}, gives us
\be\label{faadibruno1}
\partial^{\alpha} (u\circ \tau)(y) = \sum_{1 \le |\gamma| \le |\alpha|} (\partial^\gamma u )( \tau(y)) 
\, \sum_{\beta^1_1, \ldots \, , \beta^{\gamma_d}_d} \, 
c_{\alpha,\beta^1_1, \ldots \, , \beta^{\gamma_d}_d}\,  \prod_{j=1}^d\prod_{k=1}^{\gamma_j} \partial^{\beta_j^k} \tau_j (y)
\ee
where the second sum runs over all multiindices
$$
	\beta_1^1,\ldots,\beta_1^{\gamma_1},\ldots,\beta_d^1,\ldots,\beta_d^{\gamma_d}
		\in\N^d_0\setminus\{0\}
	\quad\mbox{satisfying}\quad
	\alpha = \sum_{j=1}^d\sum_{k=1}^{\gamma_j}\beta_j^k\,,
$$
with appropriate positive constants $c_{\alpha,\beta^1_1, \ldots \, , \beta^{\gamma_d}_d}$.
We put 
\beqq
\rho_R (y)& := & \min (1, \dist (y, N))\, , \qquad y \in R\, , 
\\
\rho_U (y) & := &  \min (1, \dist (y, \eta(N)))\, , \qquad y \in U\, .
\eeqq 
Let $x \in N$ be fixed.
Hence, the boundedness of the derivatives $\partial^{\beta_j^k}\tau_j $  and a change of coordinates  leads to 
\beq
\label{w-01}
\int_{\eta (B_\varepsilon (x) \cap R)} |\rho_U(y)^{|\alpha|-a} \partial^\alpha (u\circ \tau)(y)|^p\,dy & \lesssim & 
\int_{\eta (B_\varepsilon (x) \cap R)} \Big|\rho_U(y)^{|\alpha|-a}  \sum_{1 \le |\gamma|\le |\alpha|} (\partial^\gamma u)(\tau (y)) \Big|^p\,dy
\nonumber
\\
& \lesssim & \sum_{1 \le |\gamma|\le |\alpha|}
\int_{\eta (B_\varepsilon (x) \cap R)} \Big|\rho_U (y)^{|\alpha|-a}   (\partial^\gamma u)(\tau (y)) \Big|^p\,dy
\nonumber
\\
& \lesssim & \sum_{1 \le |\gamma|\le |\alpha|}
\int_{B_\varepsilon (x) \cap R} \Big|\rho_U (\eta (z))^{|\alpha|-a}   (\partial^\gamma u)(z) \Big|^p\,dz
\, .
\eeq
Applying \eqref{ws-05}  we can replace $\rho_U (\eta (z))$ by $\rho_R (z)$ itself on the right-hand side. 
We define
\[ 
N_{\varepsilon/2}: = \bigcup_{x\in N} \overline{B_{\varepsilon/2}(x)} \, .
\]
Obviously $N_{\varepsilon/2}$ is a compact set which has an open covering given by $\bigcup_{x\in N} {B_{\varepsilon}(x)}$.
The Theorem of Heine-Borel yields the existence of finitely many points $x_1, \ldots \, , x_J$ in $ N$
such that 
\[
 N_{\varepsilon/2} \subset \bigcup_{j=1}^J {B_{\varepsilon}(x_j)} \, .
\]
 Furthermore, let
\[
R_0 := \{x \in R: ~ \dist(x,N) \ge \varepsilon/2 \}\, .
\]
On this set $R_0$ the function $\rho_R$ is equivalent to $1$ and at the same time,
the function $\rho_U$ is equivalent to $1$ on $U_0:= \eta(R_0)$.
Hence,  on this part of $R$,
we may use the invariance with respect to the unweighted case, i.e.,
\be\label{w-02}
 \|\, u\circ \tau\, |\calk^m_{a,p}(U_0,\eta(N))\|\lesssim \| \, u\, |\calk^m_{a,p}(R_0,N)\|\, ,
\ee
see Adams \cite[Thm.~3.35]{Ada}.  Clearly, 
\[
 R \subset \Big(R_0 \cup \bigcup_{j=1}^J {B_{\varepsilon}(x_j)}\Big)\, .
\]
Finally, summing up the inequalities \eqref{w-01} with $x$ replaced by $x_j$ and  taking into account \eqref{w-02}, we get
\[
 \|\, u\circ \tau\, |\calk^m_{a,p}(U,\eta(N))\|\lesssim \| \, u\, |\calk^m_{a,p}(R,N)\|\, .
\]
Interchanging the roles of $\tau$ and $\eta$ we obtain the reverse inequality.
\\
{\em Step 2.} The necessary modifications for the general case are obvious.
\end{proof}

Now we are going to discuss the importance of the existence of an associated decomposition of unity.

\begin{lem}\label{deco}
 Let $D$, $(U_i)_i$, and $\Lambda_j$ with  $j=1, \ldots ,5$,  be as in Definition \ref{standard}. 
Moreover, denote by $S$ the singularity set of $D$ and let  $(\varphi_i)_i$ be a decomposition of 
unity subordinate to our covering, i.e.,
$\varphi_i \in C^\infty$, $\supp \varphi_i \subset U_i$, $0 \le \varphi_i \le 1$ and 
\[
 \sum_{i} \varphi_i (x) = 1\qquad \mbox{for all}\quad x \in \overline{D}.
\]
We put $u_i := u \, \cdot \, \varphi_i$ in $D$. \\
{\rm (i)} {If $u\in \calk^m_{a,p}(D,S)$ then }
\beqq
\|\,  u \, |\calk^m_{a,p}(D,S)\|^ * & : = & \max_{i\in \Lambda_1}\,  \|u_i|W^m_{p}(D \cap U_i)\| + 
  \max_{i\in \Lambda_2}\,  \|u_i (\eta_i^{-1} (\, \cdot\, ))\, | \calk^m_{a,p}(K,\{0\})\| 
\\
 & + &
\max_{i\in \Lambda_3}\,  \|\, u_i (\eta_i^{-1} (\, \cdot\, ))\, | \calk^m_{a,p}(P, \Gamma)\| 
+\max_{i\in \Lambda_4} \, \|\, u_i (\eta_i^{-1} (\, \cdot\, ))\, | \calk^m_{a,p}(I, M_\ell)\|
\\
 & + &
\max_{i\in \Lambda_5}\,  \|\, u_i (\eta_i^{-1} (\, \cdot\, ))\, | \calk^m_{a,p}(Q,M)\| 
\eeqq
generates an equivalent norm on $\calk^m_{a,p}(D,S)$.
\\
{\rm (ii)}
If $u:~D \to \mathbb{C}$ is a function such that the pieces $u_i$ satisfy
\begin{itemize}
 \item   $u_i \in W^m_{p}(D \cap U_i)$, $i\in \Lambda_1$;
  \item   $u_i (\eta_i^{-1} (\, \cdot\, )) \in \calk^m_{a,p}(K,\{0\})$, $i\in \Lambda_2$;
  \item   $u_i (\eta_i^{-1} (\, \cdot\, )) \in \calk^m_{a,p}(P, \Gamma)$, $i\in \Lambda_3$;
   \item   $u_i (\eta_i^{-1} (\, \cdot\, )) \in \calk^m_{a,p}(I, M_\ell)$, $i\in \Lambda_4$;
 \item   $u_i (\eta_i^{-1} (\, \cdot\, )) \in \calk^m_{a,p}(Q, M)$, $i\in \Lambda_5$;
  \end{itemize}
then $u \in  \calk^m_{a,p}(D, S)$ and 
\[
\|\,  u \, |\calk^m_{a,p}(D,S)\|\lesssim  \|\,  u \, |\calk^m_{a,p}(D,S)\|^ *\, .  
\]
\end{lem}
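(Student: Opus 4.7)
The plan is to reduce both (i) and (ii) to the local model situations of Cases I--V via the diffeomorphism invariance from Lemma~\ref{diffeo}, combined with the smooth pointwise multiplier property of the cutoffs $\varphi_i$ recorded among the basic properties of Kondratiev spaces.

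For the upper bound in (i), each $u_i = \varphi_i \cdot u$ satisfies $\|u_i|\calk^m_{a,p}(D,S)\| \lesssim \|u|\calk^m_{a,p}(D,S)\|$ since $\varphi_i \in C^\infty$ is a pointwise multiplier. Because $\supp u_i \subset U_i$, the relevant norm is effectively computed over $U_i \cap D$. For $i \in \Lambda_1$, compactness of $\overline{U_i}$ together with $\overline{U_i} \cap S = \emptyset$ gives $\rho(x) \sim 1$ on $U_i \cap D$, so the Kondratiev norm is equivalent to the unweighted $W^m_p(U_i \cap D)$-norm. For $i \in \Lambda_2, \ldots, \Lambda_5$, the compatibility condition built into Definition~\ref{standard} guarantees that $\rho(x) = \min(1,\dist(x,S))$ is equivalent on $U_i \cap D$ to the model distance function (to the vertex, the axis, the face set $M_\ell$, or the edge union $M$, respectively). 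Applying the diffeomorphism invariance from Lemma~\ref{diffeo} then transfers each piece to its model domain with equivalent norm.

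Conversely, since $\sum_i \varphi_i \equiv 1$ on $\overline{D}$ we have $u = \sum_i u_i$, so
\[
\|u|\calk^m_{a,p}(D,S)\| \le \sum_i \|u_i|\calk^m_{a,p}(D,S)\|.
\]
The covering is finite, and by the equivalences just described, each summand is controlled by the corresponding term in $\|u|\calk^m_{a,p}(D,S)\|^*$. This yields the reverse inequality needed for (i) and simultaneously proves (ii): the hypotheses there supply finiteness of each term on the right-hand side, hence finiteness of $\|u|\calk^m_{a,p}(D,S)\|$ with the stated bound.

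The main obstacle is the careful handling of the weight under the diffeomorphisms $\eta_i$: one must verify that, after localization by $\varphi_i$ and transformation to the model set, the global weight $\min(1,\dist(\cdot,S))$ is really comparable to the model weight used in \eqref{smoothconekondratiev}--\eqref{def-norm-poly-cone}. This is exactly the compatibility assumption of Definition~\ref{standard}, combined with the bi-Lipschitz bound \eqref{ws-05} for $\eta_i$ already exploited in the proof of Lemma~\ref{diffeo}. Once these weight equivalences are in place, both assertions reduce to elementary bookkeeping over the finite covering.
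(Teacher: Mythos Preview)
Your proposal is correct and follows essentially the same route as the paper's proof: localize via the partition of unity, use the $C^m$-multiplier property for the cutoffs, reduce the weight to the model weight on each patch, and invoke Lemma~\ref{diffeo} to transfer to the standard domains; the reverse inequality and part (ii) then follow from $u=\sum_i u_i$ and finiteness of the cover. The only minor difference is that the paper inserts an explicit intermediate step, proving by a compactness/contradiction argument that $\min(1,\dist(x,S))\asymp\min(1,\dist(x,S_i))$ on $U_i\cap D$ with $S_i:=S\cap\overline{U_i\cap D}$, before applying Lemma~\ref{diffeo}; you instead appeal directly to the compatibility clauses in Definition~\ref{standard}, which is equally valid and arguably more direct.
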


\begin{proof}
{\em Step 1.} Proof of (i).
Let $S_i := S\cap \overline{(U_i \cap D)}$. We claim 
\be\label{extra1}
\min (1, \dist (x,S)) \asymp  \min (1, \dist (x,S_i))\, , \qquad x \in   U_i \cap D\, .
\ee
To prove this we argue by contradiction. Let us assume that  there exists a real number $\varepsilon >0$ and a sequence 
$(x_j)_{j=1}^\infty \subset U_1 \cap D$ such that 
\[
 \dist (x_j,   S_1)  \ge \varepsilon 
\qquad \mbox{and}\qquad \lim_{j\to \infty} \dist (x_j,S)=0\, .
\]
Hence, there exists a subsequence $(x_{j_\ell})_\ell $ which is convergent with limit  $x_0$.
Necessarily $x_0 \in S$ and $x_0 \in \overline{U_1 \cap D}$. But this implies $x_0 \in S_1$, which  is a contradiction to our 
previously made assumption. The boundedness of $D$ yields the claim \eqref{extra1}.
Observe that this  implies 
\beq\label{ws-10}
\| \, u \, |\calk^m_{p,a}(D,S)\|  & = &  \Big(\sum_{|\alpha|\leq m} \int_D |\rho(x)^{|\alpha|-a} \sum_i \partial^\alpha u_i (x)|^p\,dx\Big)^{1/p}
\nonumber
\\
& \le & \sum_i \Big(\sum_{|\alpha|\leq m} \int_{U_i \cap D}  |\rho(x)^{|\alpha|-a}  \partial^\alpha u_i (x)|^p\,dx\Big)^{1/p}
\nonumber
\\
 & \lesssim & \sum_i \|\,  u_i\, |\calk^m_{p,a}(U_i \cap D,S_i)\|\, .
\eeq
We split the summation on the right-hand side into the five sums $\sum_{i\in \Lambda_j} $, $j=1, \ldots \, 5$.
In the first case we shall use
\[
 \dist (U_i \cap D,S) \geq c > 0\, , \qquad i \in \Lambda_1.
\]
This yields $c\leq \min_{i \in \Lambda_1} \dist (U_i \cap D,S)$ and consequently, by using $\rho \le 1$,  
\beq
\sum_{i\in \Lambda_1} \|\,  u_i\, |\calk^m_{p,a}(U_i \cap D,S_i)\| & \lesssim &  
\sum_{i\in \Lambda_1} \Big(\sum_{|\alpha|\leq m} \int_{U_i \cap D}  |\partial^\alpha u_i (x)|^p\,dx\Big)^{1/p}
\nonumber
\\
\label{ws-09}
&\lesssim & \sum_{i\in \Lambda_1}  \|u_i|W^m_{p}(D \cap U_i)\| \lesssim \max_{i\in \Lambda_1}  \|u_i|W^m_{p}(D \cap U_i)\|.
\eeq
Concerning the remaining terms we shall apply Lemma \ref{diffeo} and find
\beq
\label{ws-06}
\|\,  u_i\, |\calk^m_{p,a}(U_i \cap D,S_i)\|& \lesssim &
\|\,  u_i(\eta_i^{-1} (\, \cdot\, ))\, |\calk^m_{p,a}(K,{\{0\}})\|\, , \qquad i \in \Lambda_2\, , 
\\
\label{ws-07}
\|\,  u_i\, |\calk^m_{p,a}(U_i \cap D,S_i)\|& \lesssim &
\|\,  u_i(\eta_i^{-1} (\, \cdot\, ))\, |\calk^m_{p,a}(K,P)\|\, , \qquad i \in \Lambda_3\, ,
\\
\label{ws-08b}
\|\,  u_i\, |\calk^m_{p,a}(U_i \cap D,S_i)\|& \lesssim &
\|\,  u_i(\eta_i^{-1} (\, \cdot\, ))\, |\calk^m_{p,a}(I,M_l)\|\, , \qquad i \in \Lambda_4\, ,
\\
\label{ws-08c}
\|\,  u_i\, |\calk^m_{p,a}(U_i \cap D,S_i)\|& \lesssim &
\|\,  u_i(\eta_i^{-1} (\, \cdot\, ))\, |\calk^m_{p,a}(Q,M)\|\, , \qquad i \in \Lambda_5\, .
\eeq
Inserting \eqref{ws-09}-\eqref{ws-08c} into \eqref{ws-10} we have proved 
$\| \, u \, |\calk^m_{p,a}(D,S)\|\lesssim \| \, u \, |\calk^m_{p,a}(D,S)\|^*$.
In view of Lemma \ref{diffeo} the reverse inequality is obvious.
\\
{\em Step 2.} Proof of (ii).
Lemma \ref{diffeo} yields $u_i \in \calk^m_{p,a}(D,S)$ for all $i$. Hence 
$\sum_{i}u_i \in \calk^m_{p,a}(D,S)$. But $\sum_i u_i = u$.
\end{proof}

If we omit the usage of a resolution of unity, we can decompose a polyhedral domain without the explicit inclusion of polyhedral cones.
Thus let the polyhedral cone $Q$ and the set $M$ be as in Case IV,
with edges $M_1,\ldots,M_n$ and vertex in $0$. The angles in the plane $x_3=1$ are denoted by $\theta_1, \ldots , \theta_n$, 
see the figure in Case IV on page \pageref{def-norm-poly-cone}. 

In Case II we discussed the notion of a nonsmooth cone. In what follows  we need a simple modification.
Let $R>0$ denote the radius of the circle in $x_3 = 1$.
Then the nonsmooth cone is given by
\[
P=\{(x_1,x_2, x_3): ~ {x_1,x_2 > 0}, ~ 0 < x_3 < 1, ~ {x_1 ^2+ x_2 ^2} < R^2 \, x_3 ^2\}\, .
\]
Let $0 < \lambda < 4$.
For fixed $x_3$, by using polar coordinates $(r,\varphi)$, the mapping  
\[
 \eta: ~ (r,\varphi)\mapsto (r, \lambda \varphi)
\]
yields a diffeomorphism of $P$ onto the set  
\[
 \eta (P):= {P'}=\{{(r\cos \varphi, r\sin \varphi,x_3)}: 
~ 0 < x_3 < 1, ~ r^2 < R^2 \, x_3 ^2\, , ~ 0 < \varphi < \lambda \pi/2\}\, .
\]
In what follows,  these sets $\eta (P)$ as well as rotated versions will also be called nonsmooth cones.
\\
{~}\\
\begin{minipage}{0.5\textwidth}
Now we continue our construction.
For every edge $M_j$ with angle $\theta_j$ we choose
a nonsmooth cone $P_j$ {with axis $M_j$} as described above with sufficiently small opening angle $\gamma_j$, so
that none of  these cones intersect. 
Next we choose  a further  nonsmooth cone $\widetilde P_j$ with axis $M_j$ and opening angle
$\gamma_j/2$.
Then there exists a smooth cone $\widetilde K$ (i.e., a 
	cone whose intersection with $S^2$ is a smooth subset) such that
\[	
\Big(Q\setminus\bigcup_{j=1}^n P_j\Big) \subset \widetilde K
		\subset \Big(Q\setminus\bigcup_{j=1}^n\overline{\widetilde P_j}\Big) \, .
\]
Clearly, there exists a $C^\infty$-diffeomorphism $\sigma $, $\sigma (0)=0$,  mapping
	$\widetilde K$ onto a rotationally symmetric cone $K$ as in Case I.
\end{minipage}\hfill \begin{minipage}{0.5\textwidth}\quad
\includegraphics[width=7cm]{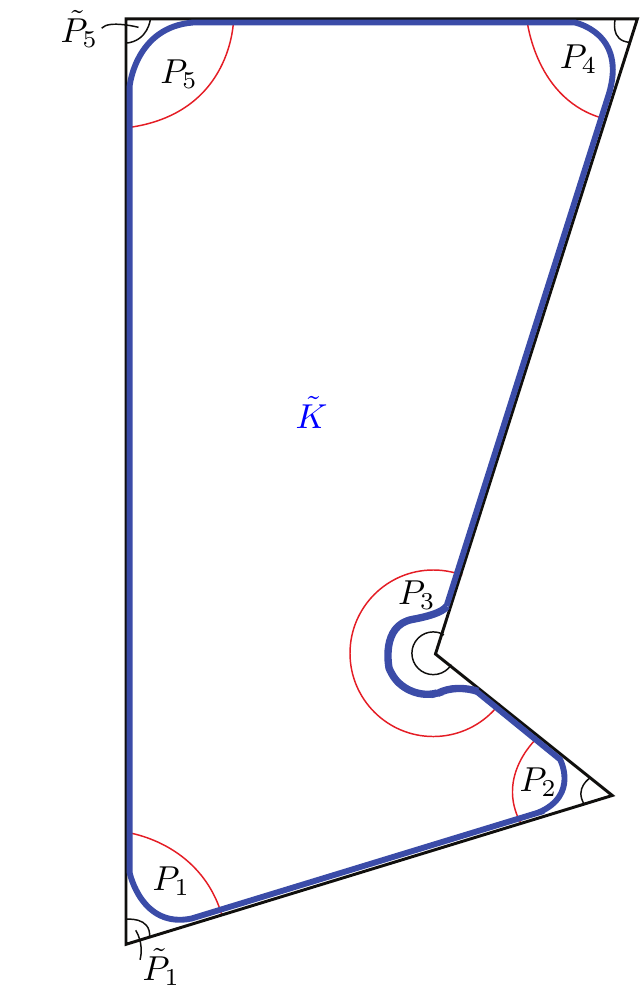}
\end{minipage}

This construction ensures that $\widetilde K$ has a non-empty intersection with each cone
$P_j$; on $P_j\subset Q$ the distance to $M$ is equivalent to the distance to
	$M_j$; and on $\widetilde K$ the distance to $M$ is equivalent to the distance to the
	vertex $0$. Altogether we have a decomposition 
\[
Q=\widetilde K\cup\bigcup_{j=1}^n P_j\, ,
\] 
where each point $x\in Q$ belongs to at most $2$ of the subsets.  Let $\rho(x):= \min (1,\dist (x,M))$. The latter then
immediately implies
\begin{align*}
\|u|\calk^m_{a,p}(Q,M)\|^p
& \sim \,  \sum_{|\alpha|\leq m}\Biggl(\int_{\widetilde K}
\Bigl|\rho(x)^{|\alpha|-a}\partial^\alpha u(x)\Bigr|^p dx +\sum_{j=1}^n\int_{P_j}
\Bigl|\rho(x)^{|\alpha|-a}\partial^\alpha u(x)\Bigr|^p dx\Biggr)
\\
&\sim  \, \|u|\calk^m_{a,p}(\tilde{K},\{0\})\|^p
+ \max_{j=1, \ldots \, n}\,  \|u|\calk^m_{a,p}(P_j,M_j)\|^p\, .
\end{align*}

Using an appropriate  diffeomorphic map we see that the above considerations lead to the following.

\begin{lem}\label{deco2}
Let $P$ be a polyhedral {cone} and let $M$ be the particular set  from Case IV. 
\\
{\rm (i)} Then 
\[
\|\,  u \, |\calk^m_{a,p}(P,M)\|^{**}
 :=   \| u \circ \sigma^{-1} \, | \calk^m_{a,p}(K,\{0\})\| 
+ \max_{j=1,\ldots \, , n} \, \|\, u\,| \calk^m_{a,p}(P_j, M_j)\| 			
\]
generates an equivalent norm on $\calk^m_{a,p}(P,M)$. 
\\
{\rm (ii)} Let $u_0: ~\widetilde K \to \mathbb{C}$ and $u_j:~P_j \to \mathbb{C}$, $j=1, \ldots \, , n$, 
be given functions satisfying $u_0(x)=u_j(x)$ for $x\in \widetilde K \cap P_j$ and
 $u_i(x)=u_j(x)$ for 	$x\in P_i \cap P_j$, respectively, and
\begin{itemize}
		\item   $u_0  \in \calk^m_{a,p}(\widetilde K,\{0\})$,
		\item   $u_j  \in \calk^m_{a,p}(P_j, M_j)$,
			$j=1 , \ldots \, , n$.
\end{itemize}
Then the function $u:P \to\mathbb{C}$,  piecewise-defined   via $u(x)=u_0(x)$, $x\in \widetilde K$,
and  $u(x)=u_j(x)$, $x\in P_j$, $j=1, \ldots \, n$,
satisfies $u \in  \calk^m_{a,p}(P, M)$ and 
\[
\|\,  u \, |\calk^m_{a,p}(P,M)\|\lesssim  \|\,  u \, |\calk^m_{a,p}(P,M)\|^{**}\, . 
\]
\end{lem}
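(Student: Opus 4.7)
The proof of part (i) is essentially contained in the geometric construction preceding the lemma; it remains to organize the estimates and to apply Lemma \ref{diffeo}. The plan is to use the covering $Q = \widetilde K \cup \bigcup_{j=1}^n P_j$ in which each point lies in at most two subsets. Splitting the defining integral of $\|u|\calk^m_{a,p}(P,M)\|^p$ according to this covering yields, up to a factor of at most $2$, the sum of the local integrals over $\widetilde K$ and over the $P_j$'s. On $\widetilde K$ the weight $\rho(x)=\min(1,\dist(x,M))$ is equivalent to $\min(1,|x|)$, since the construction guarantees a positive separation of $\widetilde K$ from the edges $M_j$ except at the vertex. Hence the integral over $\widetilde K$ agrees (up to equivalent constants) with $\|u|\calk^m_{a,p}(\widetilde K,\{0\})\|^p$, which by Lemma \ref{diffeo} applied to $\sigma$ is equivalent to $\|u\circ\sigma^{-1}|\calk^m_{a,p}(K,\{0\})\|^p$. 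On each $P_j$, the construction was arranged so that $\dist(x,M)\sim\dist(x,M_j)$, so the local integral is equivalent to $\|u|\calk^m_{a,p}(P_j,M_j)\|^p$. Passing from the sum of $p$-th powers to a maximum (losing only a factor $n$) and combining these pieces yields the equivalence $\|u|\calk^m_{a,p}(P,M)\|\sim\|u|\calk^m_{a,p}(P,M)\|^{**}$.

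For part (ii) the key point is to verify that the piecewise-defined function $u$ actually admits weak derivatives on all of $P$. Since each $u_j$ (respectively $u_0$) is a Kondratiev function on its open subset, it possesses weak derivatives there. I would first check that the compatibility conditions $u_0=u_j$ on $\widetilde K\cap P_j$ and $u_i=u_j$ on $P_i\cap P_j$ force the classical and distributional derivatives to agree on these (open) overlaps: using the standard argument, for a test function $\varphi\in C_0^\infty(P)$ decompose its support via a subordinate smooth partition of unity relative to the covering $\{\widetilde K, P_1,\dots,P_n\}$ and apply integration by parts on each piece — the boundary contributions cancel pairwise on the overlaps by the compatibility conditions. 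This yields a well-defined weak derivative $\partial^\alpha u$ on $P$ which equals the corresponding $\partial^\alpha u_0$ or $\partial^\alpha u_j$ almost everywhere on each subset.

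Once the weak derivatives are established, the norm estimate is routine: writing
\[
\int_P |\rho(x)^{|\alpha|-a}\partial^\alpha u(x)|^p\,dx
\le \int_{\widetilde K} |\rho(x)^{|\alpha|-a}\partial^\alpha u_0(x)|^p\,dx
+\sum_{j=1}^n \int_{P_j} |\rho(x)^{|\alpha|-a}\partial^\alpha u_j(x)|^p\,dx\, ,
\]
and using again the local equivalences $\rho(x)\sim |x|$ on $\widetilde K$ and $\rho(x)\sim\dist(x,M_j)$ on $P_j$, together with Lemma \ref{diffeo} to transfer the $\widetilde K$-piece to $K$, one obtains
\[
\|u|\calk^m_{a,p}(P,M)\|\lesssim \|u_0\circ\sigma^{-1}|\calk^m_{a,p}(K,\{0\})\|+\sum_{j=1}^n\|u_j|\calk^m_{a,p}(P_j,M_j)\|\lesssim \|u|\calk^m_{a,p}(P,M)\|^{**}.
\]

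The main obstacle is the verification in part (ii) that the weak derivatives exist globally on $P$ rather than merely on each piece, since the subsets $\widetilde K$ and the $P_j$'s overlap rather than tile $P$ and their union has a nontrivial geometry near the vertex; this is precisely where the compatibility hypotheses on the overlaps are used. All remaining steps are straightforward applications of Lemma \ref{diffeo} and of the equivalence of the weight $\rho$ with the respective local distance functions on each piece of the decomposition.
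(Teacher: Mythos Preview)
Your proposal is correct and follows essentially the same approach as the paper: part (i) is indeed the content of the geometric construction preceding the lemma (the displayed norm equivalence there), and for part (ii) the paper likewise argues that the compatibility conditions make $u$ well-defined and that weak differentiability carries over from the pieces.

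One small correction to your phrasing in part (ii): the argument via a partition of unity subordinate to the open cover $\{\widetilde K,P_1,\ldots,P_n\}$ involves no ``boundary contributions'' that need to cancel. The point is simply that weak differentiability is a local property: on each open overlap $\widetilde K\cap P_j$ (or $P_i\cap P_j$) the weak derivatives $\partial^\alpha u_0$ and $\partial^\alpha u_j$ both exist and, since $u_0=u_j$ there as functions on an \emph{open} set, they coincide almost everywhere by uniqueness of weak derivatives. Hence the piecewise-defined candidate $v_\alpha$ (equal to $\partial^\alpha u_0$ on $\widetilde K$ and to $\partial^\alpha u_j$ on $P_j$) is well-defined, and for any $\varphi\in C_0^\infty(P)$ one writes $\varphi=\sum_i\psi_i\varphi$ with $\psi_i\varphi$ compactly supported in one piece, on which the defining identity $\int u\,\partial^\alpha(\psi_i\varphi)=(-1)^{|\alpha|}\int v_\alpha\,\psi_i\varphi$ holds by assumption. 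This is precisely the paper's one-line observation that openness of the intersections suffices.
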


\begin{proof}
It will be enough to  comment on (ii). 
The compatibility condition implies that the function 
$u$ is well-defined. 
Moreover, since the sets $\widetilde{K}$, $P_j$ are assumed to be open (hence their
intersections are open as well), also the weak differentiability of the pieces $u_j$
carries over.
\end{proof}

\begin{example}\label{ex-dobrowolski}
There is one more interesting example {of a polyhedral domain} 
we wish to mention. It is very much in the spirit of the decomposition of the polyhedral cone itself and  motivates a 
further moderate  generalization.

\centerline{ \includegraphics[width=8cm]{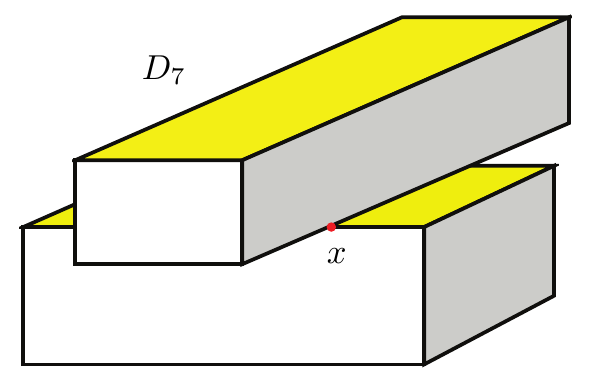} }

{~}\\
Example   $D_7$ is taken from  \cite[Ex.~6.5]{dob10}  and consists of two cuboids lying on top of each other.  
Furthermore, it is a domain of polyhedral type which is not Lipschitz 
(at the indicated point $x$ it is not possible to turn the polyhedron such that the inner part of the domain lies behind the boundary). For the same reason, this domain also does not possess the segment property. However, it is easily seen that its subdomains
$D_7^\delta=\{x\in D_7:\dist(x,S)>\delta\}$ have this property, where the singularity set $S$ as usual consists of all edges. Let us stress the fact that this is a first example where the properties of the spaces $\calk^m_{a,p}(D_7,S)$ are determined not by properties of the domain $D_7$, but of suitable subdomains ($D_7^\delta$). Similar behaviour is rather typical for Kondratiev spaces, as can be seen later on from Proposition \ref{zerlegungdereins} and some of its implications, e.g. in Propositions \ref{hilfe} and
\ref{lemma-embedding-sobolev}.

A simple decomposition in the spirit of Definition \ref{standard} can be given as  follows.
The only critical point is $x$ itself. Hence, we only deal with an open neighbourhood $U$ of $x$.
The set $U\cap D$ can be split into the union of four nonsmooth cones (along the edges) $P_1, \ldots , P_4$ with common vertex in $x$.
The remainder of $U\cap D$ is covered by  the diffeomorphic images $\tilde{K}_1,\tilde{K}_2$  of two smooth 
cones $K_1,K_2$ with vertex in 
$x$. Here, as in the case of the polyhedral cone, we may suppose that the intersections  
$\tilde{K}_1 \cap P_1$, $\tilde{K}_1 \cap P_2$, $\tilde{K}_2 \cap P_3$, $\tilde{K}_2 \cap P_4$, are open and nontrivial,
\[
\tilde{K}_1 \cap P_3 = \tilde{K}_1 \cap P_4 = \tilde{K}_2 \cap P_1 = \tilde{K}_2 \cap P_2 = \emptyset\, ,
\]
 and that $\overline{P_j} \cap \overline{P_\ell} =\{x\}$, $j \neq \ell$. 
This immediately yields the counterpart of Lemma \ref{deco2} for $D_7$. Note, however, that none of those cones contains $\{x\}$, so this does {\it not} yield a cover for $\overline{D_7}$.
\end{example}

The moderate generalization we have in mind is as follows.

\begin{defi}\label{gd}
Let $D\subset \Rd$ be a  domain and let $M$ denote a closed subset of its boundary $\partial D$.
The pair $(D,M)$ is of generalized polyhedral type
if either 
\begin{itemize}
 \item $(D,M)$ coincides with one of the pairs in Cases I-IV. 
\item $(D,M)$ is of polyhedral type. 
\item The set $D$ {with nontrivial closed subset  $M$ of $\partial D$}  satisfies the following conditions:
\begin{itemize}
\item[(a)] There exist finitely many  open sets $D_1, \ldots \, , D_N$  such that
\[
{D} = \bigcup_{j=1}^N {D}_j \, .
\]
\item[(b)] Denote $M_j := M \cap \overline{D}_j$, $j=1, \ldots \, , N$. Then each pair $(D_j,M_j)$ is the diffeomorphic image
of one of the Cases I-IV.
\item[(c)] Each $x \in D$ should belong to at most two different sets $D_j$ and  
\[
|\{\ell \in \{1, \ldots \, ,N\}\setminus\{j\}:
	~ D_j \cap D_\ell \neq \emptyset \}|\ge 1\, , \qquad j=1, \ldots \, , N\, . 
\]
\end{itemize}
\end{itemize}
\end{defi}

\begin{rem}
	Let us give some comments on this definition. At  first glance it may not be clear that indeed this
	definition is more general than the previous one (Definition \ref{standard}). The difference between
	the definitions becomes clearer when  considering a double cone (or more generally, a cone
	$K$ where the intersection with the unit sphere $S^d$ is a disconnected open subset of $S^d$). While
	naively such a domain clearly can be decomposed into connected cones, this decomposition does NOT
	yield an admissible cover in the sense of Definition \ref{standard} (note that there we require
	$\overline{D}$ to be covered). What is more, such a cover does not exist: on any open set covering the
	common vertex the distance to the singular set can neither be equivalent to the distance to one
	fixed edge, nor can it be a polyhedral cone. In other words, a domain consisting of two polyhedral
	cones whose closures intersect in $\{0\}$ is not a domain of polyhedral type in the sense of
	Definition \ref{standard}.
	
	Admittedly, such a domain is not of generalized polyhedral type either -- condition
	(c) is violated -- and disconnected domains might be of minor interest at best, but it helps to
	clarify the matter.
	
	It is exactly this situation we  come across when considering Dobrowolski's example $D_7$: a
	neighbourhood of the critical point $x$ can be decomposed into several smooth and nonsmooth or
	polyhedral cones as described above in Example \ref{ex-dobrowolski}, with common vertex $x$, but $x$ itself cannot be covered by any
	open neighbourhood as required by Definition \ref{standard}. On the other hand, the described
	decomposition clearly shows $D_7$ to be of generalized polyhedral type.
\end{rem}

\begin{lem}\label{deco3}
Let the pair $(D,M)$ be of generalized polyhedral type.
Then the statements from Lemma \ref{deco2}, appropriately modified, remain true. 
\end{lem}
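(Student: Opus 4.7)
The plan is to imitate the two-step argument of Lemma \ref{deco2}, but replace the concrete decomposition of the polyhedral cone (consisting of one smooth cone $\widetilde K$ together with nonsmooth cones $P_j$) by the abstract cover $D=\bigcup_{j=1}^N D_j$ from Definition \ref{gd}. The two crucial structural features we exploit are (c) of Definition \ref{gd} --- every point lies in at most two sets $D_j$ --- and Lemma \ref{diffeo}, which allows us to transfer all statements back to the model Cases I--IV.

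For part (i), I would first establish the local distance equivalence
\[
\min(1,\dist(x,M))\;\asymp\;\min(1,\dist(x,M_j))\,,\qquad x\in D_j\,,
\]
by exactly the compactness/contradiction argument used for \eqref{extra1} in Lemma \ref{deco}; here $M_j=M\cap\overline{D_j}$. Once this is in place, the proposed norm
\[
\|u|\calk^m_{a,p}(D,M)\|^{**}:=\sum_{j=1}^N\bigl\|\,u\circ\eta_j^{-1}\,\bigm|\calk^m_{a,p}(\eta_j(D_j),\eta_j(M_j))\bigr\|
\]
(with the model-case norm from Cases I--IV on the right) is controlled from above by $\|u|\calk^m_{a,p}(D,M)\|$ by applying Lemma \ref{diffeo} on each piece separately. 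For the reverse direction I would use condition (c): since every $x\in D$ lies in at most two of the $D_j$, for each multi-index $\alpha$ with $|\alpha|\le m$ we have
\[
\int_D |\rho(x)^{|\alpha|-a}\partial^\alpha u(x)|^p\,dx
\;\le\;\sum_{j=1}^N\int_{D_j}|\rho(x)^{|\alpha|-a}\partial^\alpha u(x)|^p\,dx\,,
\]
which, combined with the local distance equivalence and Lemma \ref{diffeo}, bounds $\|u|\calk^m_{a,p}(D,M)\|$ by a constant (depending only on $N$) times $\|u|\calk^m_{a,p}(D,M)\|^{**}$.

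For part (ii), suppose functions $u_j$ are given on $D_j$ with $u_i\equiv u_j$ on $D_i\cap D_j$. Exactly as in Lemma \ref{deco2}(ii), the compatibility assumption makes the piecewise definition $u(x):=u_j(x)$ for $x\in D_j$ unambiguous. The nontrivial point is weak differentiability across the overlaps: since each intersection $D_i\cap D_j$ is open and the restrictions of $u$ to the open sets $D_j$ admit weak derivatives up to order $m$ which agree on overlaps, a standard patching argument via test functions (writing $\int_D u\,\partial^\alpha\varphi = \sum_j\int_{D_j}u_j\,\partial^\alpha\varphi$ after breaking up $\supp\varphi$ with the help of condition (c)) shows that $u$ has $m$ weak derivatives on $D$ which coincide, $D_j$-a.e., with those of $u_j$. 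The norm bound then follows by the inequality in part (i).

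The part I expect to require the most care is the weak-differentiability patching in (ii), because there is no partition of unity available (the whole point of Definition \ref{gd} is that such a partition need not exist, as explained after Lemma \ref{deco}). However, the overlaps $D_i\cap D_j$ are open by hypothesis and each point lies in at most two of the $D_j$, so the gluing reduces to the classical statement that a function which is weakly differentiable on each member of a finite open cover and whose pieces coincide on overlaps is weakly differentiable on the union --- this is a routine consequence of the definition of weak derivatives together with the agreement of $u_j$ on intersections. Once this is in hand, the necessary modifications to the statements of Lemma \ref{deco2} are merely notational: $\widetilde K$ and the $P_j$ are replaced by the diffeomorphic images of the Case I--IV model domains, and the model norms on the right-hand side of $\|\,\cdot\,\|^{**}$ are chosen accordingly.
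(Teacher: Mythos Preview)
Your proposal is correct and, in fact, supplies considerably more detail than the paper does: the paper gives no proof of Lemma~\ref{deco3} at all, treating it as an immediate extension of Lemma~\ref{deco2} (whose own proof is two sentences, addressing only part~(ii)). Your outline for part~(i) --- local distance equivalence via the compactness argument of \eqref{extra1}, followed by Lemma~\ref{diffeo} on each piece and the finite-overlap condition~(c) for the reverse bound --- is exactly the intended route.

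Two small corrections. First, the displayed identity $\int_D u\,\partial^\alpha\varphi = \sum_j\int_{D_j}u_j\,\partial^\alpha\varphi$ is wrong as written, since the $D_j$ overlap and you would double-count. Second, your worry that ``there is no partition of unity available'' is a misreading: what fails for pairs of generalized polyhedral type is a partition of unity subordinate to a cover of $\overline{D}$ with the structure of Lemma~\ref{deco}; a smooth partition of unity on the \emph{open} set $D$ subordinate to the open cover $\{D_j\}$ always exists, and that is all the standard weak-derivative patching argument needs. With that in hand, the gluing in part~(ii) is genuinely routine, just as the paper's brief proof of Lemma~\ref{deco2}(ii) indicates.
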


Note, however, that there is no counterpart of Lemma \ref{deco} (see also the discussion following Remark \ref{rem-def-domain}).

\begin{rem}
 \rm 
 All domains of generalized  polyhedral type satisfy the cone condition, cf. \cite[Def.~4.6]{AF03}. But, in general, 
they do not have a Lipschitz boundary, see 
the example $D_7$. For our investigations within this article we do not require any additional regularity assumptions on the domain (or its boundary) beyond being of (generalized) polyhedral type.
\end{rem}


\subsection{Extensions}


Stein's linear extension operator $\mathfrak{E}$, see \cite[VI.3.2]{St70}, has become a standard tool in the 
framework of Sobolev spaces on Lipschitz domains. It can be used in the framework of Kondratiev spaces as well.
The following Proposition represents a particular case of a more general result 
which can be found in \cite{Hansen}.
We need one more notation, compare with \eqref{ws-15}:
\be\label{ws-16}
\R^\ell_* := \{x \in \Rd:~x_1 = \ldots = x_{d-\ell }=0\}. 
\ee
Clearly, if $\ell=1$ we shall simply write $\R_*$.
For brevity we also put $\R^0_* := \{0\}$ and 
$\calk^m_{a,p}(\Rd, \R^\ell_*):= \calk^m_{a,p}(\Rd \setminus \R^\ell_*, \R^\ell_*)$, $0 \le \ell < d$.

\begin{prop}\label{extension}
Let $d\ge 2$, $1 \le p <\infty$, $a\in \R$, and $m \in \N$.
\\
{\rm (i)} Let $K$ be our smooth cone from {\bf Case I}.
Then the Stein extension operator $\mathfrak{E}$ yields a linear and bounded mapping of 
$\calk^m_{a,p}(K,\{0\})$ into $\calk^m_{a,p}(\Rd,\{0\})$.
\\
{\rm (ii)} Let $P$ and $\Gamma$ be as in  {\bf Case II}.
Then the Stein extension operator $\mathfrak{E}$ yields a linear and bounded mapping of 
$\calk^m_{a,p}(P,\Gamma)$ into $\calk^m_{a,p}(\Rd,\R_*)$.
\\
{\rm (iii)} Let $I$ and $M_\ell$, $1\le \ell < d, $ be as in  {\bf Case III}.
Then the Stein extension operator $\mathfrak{E}$ yields a linear and bounded mapping of 
$\calk^m_{a,p}(I,M_\ell)$ into $\calk^m_{a,p}(\Rd,\R^\ell_*)$.
\end{prop}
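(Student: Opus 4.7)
The plan is to verify that Stein's operator $\mathfrak{E}$ itself, without modification, maps the Kondratiev space $\calk^m_{a,p}(\Omega,M)$ boundedly into $\calk^m_{a,p}(\Rd,M)$ in each of the three cases, by combining Stein's classical pointwise derivative estimates with the geometric observation that the distance $\rho$ to the singular set $M$ is preserved (up to constants) by Stein's reflection.

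Recall that on each coordinate patch of a Lipschitz boundary, Stein's formula reads
\[
\mathfrak{E}u(x) \,=\, \int_1^\infty u(\Phi(x,\lambda))\,\psi(\lambda)\,d\lambda, \qquad x \notin \Omega,
\]
where $\Phi(x,\lambda)\in\Omega$ satisfies $|\Phi(x,\lambda)-x|\sim\lambda\delta^*(x)$ and $\dist(\Phi(x,\lambda),\partial\Omega)\sim\lambda\delta^*(x)$, with $\delta^*\sim\dist(\,\cdot\,,\partial\Omega)$ the regularized distance, and $\psi$ a rapidly decaying kernel with vanishing moments. Standard differentiation together with the bound $|\partial^\gamma\delta^*|\lesssim(\delta^*)^{1-|\gamma|}$ yields
\[
|\partial^\alpha \mathfrak{E}u(x)| \,\lesssim\, \sum_{|\beta|\le|\alpha|} \delta^*(x)^{|\beta|-|\alpha|} \int_1^\infty \lambda^{|\alpha|-|\beta|}\,|(\partial^\beta u)(\Phi(x,\lambda))|\,|\psi(\lambda)|\,d\lambda.
\]

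The critical input for passing from $W^m_p$ to the weighted setting is the comparison $\rho(\Phi(x,\lambda))\sim\rho(x)$ together with $\delta^*(x)\lesssim\rho(x)$, valid uniformly for $x\in\Rd\setminus\Omega$ and $\lambda$ in the effective support of $\psi$. The second bound is immediate from $M\subset\partial\Omega$. The first exploits the specific geometry of each case: in Cases II and III the singular set $M$ is an affine subspace, and $\mathfrak{E}$ can be organised so that $\Phi$ acts only in directions transverse to $M$, leaving the component $x'$ (which determines $\rho$) untouched, so $\rho(\Phi(x,\lambda))=\rho(x)$ exactly; in Case I, where $M=\{0\}$, the scale-similarity of the smooth cone together with $|\Phi(x,\lambda)-x|\lesssim\lambda|x|$ gives $|\Phi(x,\lambda)|\sim|x|$ once the polynomial powers of $\lambda$ are absorbed against the moments of $\psi$. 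Inserting the pointwise bound into the Kondratiev norm, applying Minkowski's inequality to exchange the integrals over $\lambda$ and $x$, and performing the change of variables $x\mapsto\Phi(x,\lambda)$ (whose Jacobian is uniformly controlled in Stein's construction) yields
\[
\int_{\Rd\setminus\Omega} \rho(x)^{p(|\alpha|-a)}|\partial^\alpha \mathfrak{E}u(x)|^p\,dx \,\lesssim\, \sum_{|\beta|\le|\alpha|} \int_\Omega \rho(y)^{p(|\beta|-a)}|\partial^\beta u(y)|^p\,dy,
\]
where the factor $\delta^*(x)^{p(|\beta|-|\alpha|)}\le\rho(x)^{p(|\beta|-|\alpha|)}$ recombines with $\rho(x)^{p(|\alpha|-a)}$ to produce exactly the weight $\rho(y)^{p(|\beta|-a)}$ needed on $\Omega$. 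Summing over $|\alpha|\le m$ and adding the trivial contribution $\|u|\calk^m_{a,p}(\Omega,M)\|$ from the interior yields the bound.

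The main obstacle is the geometric comparability $\rho(\Phi(x,\lambda))\sim\rho(x)$ uniformly in $\lambda$. Since $\lambda$ ranges over $[1,\infty)$, the reflected point can in principle be far from $x$; only the rapid decay of $\psi$ makes the factors $\lambda^{|\alpha|-|\beta|}$ admissible via $\int_1^\infty\lambda^k|\psi(\lambda)|\,d\lambda<\infty$ for every $k$. In Cases II and III this comparability holds trivially because the singular set is a full linear subspace along which the extension can be made translation-invariant. Case I is the delicate one: near the vertex the lateral boundary does not literally respect the radial direction, so one must patch local Stein reflections against the lateral surface with the scale-similar behaviour near $0$, and verify that $\rho(\Phi(x,\lambda))\sim\rho(x)$ persists across this patching; this is precisely where the cone's dilation-invariance is exploited.
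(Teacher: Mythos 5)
The paper does not actually prove Proposition \ref{extension}; it cites it as a special case of a result in \cite{Hansen}, so the only meaningful comparison is with that standard argument, whose strategy (pushing the weight through Stein's construction) you correctly identify. However, your execution has a gap that makes the chain of inequalities collapse. From the crude pointwise bound
$|\partial^\alpha \mathfrak{E}u(x)| \lesssim \sum_{|\beta|\le|\alpha|} \delta^*(x)^{|\beta|-|\alpha|} \int_1^\infty \lambda^{|\alpha|-|\beta|}\,|(\partial^\beta u)(\Phi(x,\lambda))|\,|\psi(\lambda)|\,d\lambda$
you pass to the target norm via $\delta^*(x)^{p(|\beta|-|\alpha|)}\le\rho(x)^{p(|\beta|-|\alpha|)}$. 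This inequality is backwards: since $M\subset\partial\Omega$ one has $\delta^*(x)\sim\dist(x,\overline{\Omega})\le\dist(x,M)$, hence $\delta^*\lesssim\rho$ and, for $|\beta|<|\alpha|$, $\delta^*(x)^{|\beta|-|\alpha|}\gtrsim\rho(x)^{|\beta|-|\alpha|}$, with the ratio blowing up near $\partial\Omega\setminus M$, where $\rho\sim 1$ but $\delta^*\to 0$. Worse, the majorant you integrate is genuinely infinite there: for $m\ge 2$ the term $|\beta|=1$ carries $\delta^*(x)^{(1-m)}$, and $\int_0^1 t^{(1-m)p}\,dt=\infty$, so for any $u$ whose gradient does not vanish near the smooth part of the boundary your upper bound for the Kondratiev norm is $+\infty$. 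Stein's theorem survives only because the vanishing moments $\int_1^\infty\lambda^k\psi(\lambda)\,d\lambda=0$, $k\ge1$, are used to \emph{cancel} exactly these singular terms (a Taylor expansion along $s\mapsto x+s\,\delta^*(x)e$ trades each negative power of $\delta^*$ for one further derivative of $u$, leaving only order-$|\alpha|$ derivatives and non-negative powers of $\delta^*$ --- which is also why, as noted in the proof of Theorem \ref{norms}, order-$m$ derivatives of $\mathfrak{E}u$ involve only order-$m$ derivatives of $u$). You invoke the moments only to absorb powers of $\lambda$, not for this cancellation, so the key analytic step is missing.

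A second, lesser, problem is the weight comparison under $\Phi$. The claim that in Cases II and III the reflection leaves $x'$ untouched, so that $\rho(\Phi(x,\lambda))=\rho(x)$ exactly, is false: the singular set $\Gamma$ resp.\ $M_\ell$ is precisely the edge where the $d-1$ resp.\ $d-\ell$ flat faces of $P$ resp.\ $I$ meet, and the local Stein reflection there acts along a direction with non-zero $x'$-components (the diagonal of the corner), which does change $|x'|$. What is true, and what the argument needs, is a two-sided estimate of the type $c\,\rho(x)\le\rho(\Phi(x,\lambda))\le C\lambda\,\rho(x)$ --- both directions are required because the exponent $|\alpha|-a$ can have either sign for $a\in\R$ --- with the polynomial loss in $\lambda$ traded against the decay of $\psi$; this is the mechanism you correctly describe for Case I near the vertex, but it must be carried out near the edges as well. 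Repairing both points essentially amounts to reproducing the proof from \cite{Hansen}.
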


With Proposition \ref{extension} and Lemma \ref{deco} at hand we can reduce 
basic properties of the spaces $\calk^m_{a,p}(D,S)$ (under some extra conditions on $D$) to the  model cases  
$\calk^m_{a,p}(\Rd, \R^\ell_*)$, $0 \le \ell < d$. In this model setting we find for the weight function
\begin{equation}\label{eq-weight-model}
	\rho(x_1,\ldots,x_{d-\ell},\ldots,x_d)
		=\min\biggl(1,\Bigl(\sum_{i=1}^{d-\ell}|x_i|^2\Bigr)^{1/2}\biggr)\,.
\end{equation}

\begin{rem}
{\rm 
For a moment we return to a discussion of Lemma \ref{deco} and Lemma \ref{deco2}.
By Proposition \ref{extension}  we have extension operators $\mathfrak{E}_0, \ldots\, , \mathfrak{E}_n$ 
for these different types of Kondratiev spaces
appearing in Lemma \ref{deco2}(ii). 
We define
\[
u:= \sum_{j=0}^n \mathfrak{E}_j u_j \, .
\]
Then $u_{|_{\widetilde{K}}} \in \calk^m_{a,p}(\widetilde K,\{0\})$ and 
$u_{|_{P_j}} \in \calk^m_{a,p}(P_j, M_j)$, $j=1, \ldots \, , n$ follows. However, in general we do not have coincidence of 
$u_{|_{\widetilde{K}}}$ with $u_0$ and of $u_{|_{P_j}}$ with $u_j$.
}
\end{rem}


\subsection{A localization principle}

\label{local1}

The following decomposition of the norm of weighted Sobolev spaces  is in some sense standard.
We will allow a slightly greater generality than before.

Let $\Omega\subset \Rd$ be an open, nontrivial connected set and let $M$ be a closed nontrivial subset of the boundary.
Then we define 
\begin{equation}\label{Dj-set}
\Omega_j : = \{x\in \Rd: ~~2^{-j-1}<\rho(x)<2^{-j+1}\},\qquad j\in\N_0\, , 
\end{equation}
where 
$\rho (x):= \min (1, \dist (x,M))$, $x\in \Rd$.
Next we choose the largest number  $j_0\in \N_0$ such that  

\[
 \{x\in \Omega: ~ \rho (x) \ge  2^{-j_0+1}\}  = \emptyset \, .
\]
This implies
\[
\Omega \cap \Omega_{j_0}=  \{x\in \Omega: ~ 2^{-j_0 -1} < \rho (x) <  2^{-j_0+1}\}  \neq  \emptyset \, .
\]
Because $\Omega$ is open and connected, the 
continuity of $\rho$ yields

\be\label{w-03}
|\Omega \cap \Omega_j| >0 \qquad \mbox{for all} \quad j\ge j_0\, . 
\ee
Hence
\[
 \Omega =  \bigcup_{j=j_0}^\infty  (\Omega \cap \Omega_j)  \, .
\]
For technical reasons we need to distinguish the following two cases:
\\
a) $ \{x\in \Omega: ~~2^{-j_0} + 2^{-j_0-2} <\rho(x)<2^{-j_0+2} - 2^{-j_0 -2}\} = \emptyset$.
Then we define $j_1 := j_0$.
\\
b) $ \{x\in \Omega: ~~2^{-j_0} + 2^{-j_0-2} <\rho(x)<2^{-j_0+2} - 2^{-j_0 -2}\} \neq \emptyset$.
Then we define $j_1 := j_0-1$.
\\ 
Of course, we will need some regularity of $\Omega$.
We will use a condition guaranteeing that for $x \in \Omega$ an essential part of the  ball centred at $x$ and with radius proportional 
to the {distance of $x$ to $M$} lies inside $\Omega$.
We put  $\sigma := 2^{-j_0}$.

\begin{prop}\label{zerlegungdereins}
Let $1 \le p < \infty$, $a\in \R$, and $m \in \N$.
Let $\Omega, M, \rho$, and $j_1$  be as above. 
We assume that there exist two constants $c>0$ and $t\in (0,1)$ such that
\begin{itemize}
 \item for all $x \in \Omega$, $\dist (x,M)< \sigma$, the balls $B_{\lambda} (x)$ satisfy
\be\label{ball condition}
|B_{\lambda} (x) \cap \Omega|\ge c\, \lambda^{d} \qquad \mbox{for all} \quad \lambda \in \left[\frac t{32}\,  \rho(x), t\, \rho(x)\right]\, . 
\ee
\item for all $x \in \Omega$, $\dist (x,M) \ge  \sigma$,  the balls $B_{\lambda} (x)$ satisfy
\be\label{ball condition2}
|B_{\lambda} (x) \cap \Omega|\ge c\, \lambda^{d}\qquad \mbox{for all}\quad \lambda\le t\sigma . 
\ee
\end{itemize}
Then  there exist positive constants $A,B$, and a smooth  decomposition of unity 
$(\varphi_j)_{j\ge j_1}$ such that 
\begin{itemize}
\item $\varphi_j \in C^\infty (\R^d)$, $j \ge j_1$;
\item $\supp \varphi_j \subset \Omega_j$, $j \ge j_1$;
\item $0 \le \varphi_j(x) \le 1$ for all $j \ge j_1$ and all $x\in \Omega$; 
 \end{itemize}
\[
\sum_{j=j_1}^\infty\varphi_j(x)= 1  \qquad \mbox{for all} \qquad 	x\in \Omega\, , 
\]
and 
\be\label{ws-160}
 A \, \|u|\calk^m_{a,p}(\Omega,M)\|^p \le  \sum_{j=j_1}^\infty\|\varphi_j u|\calk^m_{a,p}(\Omega, M)\|^p
\le B \,  \|u|\calk^m_{a,p}(\Omega, M)\|^p
\ee
for all $ u\in\calk^m_{a,p}(\Omega, M)$.
\end{prop}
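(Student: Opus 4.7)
The plan is to carry out the standard dyadic partition-of-unity construction based on Stein's regularized distance $\tilde{\varrho}$ (whose existence and properties were collected among the basic properties in Subsection \ref{sub1}), and then pass between the derivatives of $u$ and those of $\varphi_j u$ by means of the Leibniz rule. The role of the ball conditions \eqref{ball condition}, \eqref{ball condition2} is to guarantee that each layer $\Omega\cap\Omega_j$ actually contains a ball of diameter comparable to $2^{-j}$ around any of its points, so that the partition functions $\varphi_j$ are nontrivial on $\Omega$ and all the ensuing constants can be chosen uniformly in $j$.

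In concrete terms, I would first pick an auxiliary bump $\Psi\in C^\infty(\R)$ with $\supp\Psi\subset(1/(2B),2/A)$, $0\leq\Psi\leq 1$, and $\sum_{k\in\Z}\Psi(2^k t)\equiv 1$ for $t>0$ (a standard Littlewood--Paley decomposition of the positive halfline), where $A,B$ are Stein's constants. For $j>j_1$ I would set $\varphi_j(x):=\Psi(2^j\tilde{\varrho}(x))$, and then define $\varphi_{j_1}(x):=1-\sum_{j>j_1}\varphi_j(x)$ to absorb the bulk contribution. Using $A\rho\leq\tilde{\varrho}\leq B\rho$ one verifies $\supp\varphi_j\subset\Omega_j$ and bounded overlap (at most two or three indices are active at any point), while the case distinction defining $j_1$ is precisely what is needed so that $\varphi_{j_1}$ still has support inside $\Omega_{j_1}$. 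The crucial derivative bound
\[
|\partial^\alpha\varphi_j(x)|\lesssim 2^{j|\alpha|}\sim \rho(x)^{-|\alpha|},\qquad x\in\supp\varphi_j,
\]
follows from Faà di Bruno combined with the Stein estimate $|\partial^\beta\tilde{\varrho}|\lesssim\rho^{1-|\beta|}$ and the fact that $\rho(x)\sim 2^{-j}$ on $\supp\varphi_j$.

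With these ingredients in hand the two inequalities in \eqref{ws-160} are essentially symmetric applications of Leibniz and bounded overlap. For the right-hand inequality, Leibniz gives
\[
\rho^{|\alpha|-a}\bigl|\partial^\alpha(\varphi_j u)\bigr|
\lesssim \sum_{\beta\leq\alpha}\rho^{|\alpha|-|\beta|-a}\bigl|\partial^{\alpha-\beta}u\bigr|\cdot\chi_{\supp\varphi_j}
=\sum_{\gamma\leq\alpha}\rho^{|\gamma|-a}\bigl|\partial^\gamma u\bigr|\cdot\chi_{\supp\varphi_j},
\]
so raising to the $p$-th power, integrating over $\Omega$, and summing in $j$ (where bounded overlap absorbs a constant) produces $\sum_j\|\varphi_j u\|^p_{\calk^m_{a,p}(\Omega,M)}\lesssim\|u\|^p_{\calk^m_{a,p}(\Omega,M)}$. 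For the left-hand inequality one differentiates the identity $u=\sum_j\varphi_j u$ termwise — the sum being locally finite so that exchanging $\partial^\alpha$ and $\sum_j$ causes no trouble — and uses the elementary finite-sum estimate $|\sum_{j\in F}a_j|^p\leq|F|^{p-1}\sum_{j\in F}|a_j|^p$ together with the bounded overlap to conclude.

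The main technical obstacle lies in setting up the construction cleanly near the bulk index $j_1$: Stein's $\tilde{\varrho}$ behaves nicely as a regularization of $\dist(\,\cdot\,,M)$, but here $\rho$ is clipped at~$1$, so the capping function $\varphi_{j_1}$ has to be handled by the case distinction above, and one needs to check that it still satisfies the support and derivative bounds claimed. Verifying the bounds for $\varphi_{j_1}$ is exactly where \eqref{ball condition2} (the density of $\Omega$ at distances $\gtrsim\sigma$) is required in order to make the dyadic estimates uniform. Apart from this bookkeeping the argument is routine.
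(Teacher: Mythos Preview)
Your route via Stein's regularized distance is genuinely different from the paper's and, once the bookkeeping is straightened out, actually works. The paper does \emph{not} compose a bump with $\tilde\varrho$; instead it mollifies the characteristic functions of slightly shrunk layers $\Omega_j^\varepsilon\subset\Omega$ and then normalizes. In that construction the ball conditions \eqref{ball condition}--\eqref{ball condition2} are the heart of Step~1: they ensure that the convolution $\phi_j(x)=(t2^{-j-4})^{-d}\int\omega\bigl((x-y)/(t2^{-j-4})\bigr)\cx_j(y)\,dy$ stays uniformly bounded below on the inner layer $\widetilde\Omega_j^\varepsilon$, so that $\sum_j\phi_j$ never vanishes and the normalization $\varphi_j=\phi_j/\sum_k\phi_k$ is legitimate. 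Step~2 (the norm equivalence via Leibniz and bounded overlap) is essentially what you wrote.

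Your explanation of where \eqref{ball condition}--\eqref{ball condition2} enter your own argument is not convincing, and for a good reason: in the $\Psi(2^j\tilde\varrho)$ construction the partition-of-unity property and the derivative bounds $|\partial^\alpha\varphi_j|\lesssim 2^{j|\alpha|}$ follow directly from $\sum_k\Psi(2^kt)\equiv 1$ and Fa\`a di Bruno together with $|\partial^\beta\tilde\varrho|\lesssim\rho^{1-|\beta|}$, with no input from the geometry of $\Omega$ beyond the existence of $\tilde\varrho$. In particular your claim that \eqref{ball condition2} is needed for the bounds on $\varphi_{j_1}$ is not right; those bounds are inherited from the finitely many $\varphi_j$ active at each point. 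So your approach in fact bypasses the ball hypotheses, which is a feature rather than a defect---but you should say so explicitly rather than attribute to them a role they do not play in your argument.

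One technical point does need repair. With $\supp\Psi\subset(1/(2B),2/A)$ you only obtain $\supp\varphi_j\subset\{2^{-j-1}/B^2<\rho<2^{-j+1}/A^2\}$, which is contained in $\Omega_j$ only if Stein's constants satisfy $B^2\le 1\le A^2$; this is not guaranteed, and simultaneously requiring $\sum_k\Psi(2^kt)\equiv 1$ forces the support window to have ratio $>2$, so you need $2A>B$. Either check that a regularized distance with this ratio is available, or (cleaner) state the support condition as $\supp\varphi_j\subset\{c_12^{-j}<\rho<c_22^{-j}\}$ for fixed $c_1,c_2$, which is all that the Step~2 argument uses.
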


\begin{proof}
{\em Step 1.} Construction of the $(\varphi_j)_{j=j_1}^\infty$.
We define $\varepsilon_j := 2^{-j-4}$, $j \ge j_1$.
Associated are the  two sequences $(a_j)_j$ and $(b_j)_j$ given by
\[
 0 < a_j := 2^{-j-1} + \varepsilon_j < b_j := 2^{-j+1} - \varepsilon_j\, , \qquad j \ge j_1.
\]
Observe that both sequences are strictly monotone decreasing.
Then we put
 \[
\Omega_j^\varepsilon : =\{x\in \Omega: ~~a_j < \rho(x) < b_j \}, \qquad j\ge j_1.
\]
It follows 
\be\label{voll}
\Omega = \bigcup_{j=j_1}^\infty  \Omega_j^\varepsilon  \, .
\ee
Later on we need a further modification.
Define
\[
\widetilde{\Omega}_j^\varepsilon:= \{x \in \Omega:~\quad  2^{-j-1} + 2\varepsilon_j < \rho (x) < 2^{-j+1} - 2\varepsilon_j \} \, , 
\qquad j \in \N_0\, .
\]
Then we still have 
\be\label{vollb}
\Omega = \bigcup_{j=j_1}^\infty  \widetilde{\Omega}_j^\varepsilon  \, .
\ee
{{Moreover, as in \eqref{w-03} }}
we conclude  $\widetilde{\Omega}_j^\varepsilon \neq \emptyset$ if  $j \ge j_1$
(and therefore ${\Omega}_j^\varepsilon \neq \emptyset$, $j \ge j_1$, as well).
Here we need the particular definition of $j_1$. 
By $\cx_j $ we denote the characteristic function of $\Omega_j^\varepsilon$, $j \ge j_1$.
We claim that
\[
1 \le \sum_{j=j_1}^\infty \cx_j (x)\le 2
\]
holds for all $x \in \Omega$.
The lower bound is a trivial consequence of \eqref{voll}.
Since 
\[
\Omega_j^\varepsilon \cap  \Omega_{j+\ell}^\varepsilon = \emptyset \, , \qquad \ell \ge 2\, , 
\]
for all $j\in \N_0$, also the upper bound is immediate.
\\
By means of a standard mollification we turn these functions into 
smooth functions.
We define
\[
\omega (x):= \alpha \, \left\{  \begin{array}{lll}
e^{\frac{1}{1-|x|^2}} & \qquad & \mbox{if}\quad |x|<1\, , 
\\
0 && \mbox{otherwise},                                  
\end{array}\right.  
\]
where the positive constant $\alpha $ is chosen in such a way that $\int \omega (y)dy = 1$ holds.
Clearly,  $\omega \in C_0^\infty (\Rd)$, $\omega \ge 0$, $\supp \omega \subset \overline{B_{1} (0)}$ and 
\be\label{moll}
\inf \Big\{ \int_E \omega (y)\, dy:\quad E \subset \overline{B_{1} (0)}\, , ~~ |E|\ge c\} >0
\ee
for any fixed $c \in (0, |B_1(0)|]$.
We put  
\be\label{abl}
 \phi_j (x):= (t\, 2^{-j-4})^{-d} \int \omega \Big(\frac{x-y}{t\, 2^{-j-4}}\Big)\, \cx_j (y)\, dy\, , \qquad x \in \Rd\, , \quad j \ge j_1\, .
\ee
Clearly, $\phi_j \in C^\infty (\Rd)$. 
Concerning the supports it is easily seen that 
\[
 \supp \phi_j \subset \{y\in \Rd: \: \dist (y, \Omega_j^\varepsilon) < \varepsilon_j \} \subset {\Omega}_j \, .
\]
On the other hand, for $x \in \widetilde{\Omega}_j^\varepsilon$, $j \ge j_1$,  we find
\[
\phi_j (x)\ge  (t\, 2^{-j-4})^{-d} \int_{B_{t \, 2^{-j-4}} (x) \cap \Omega_j^{\varepsilon}} 
\omega \Big(\frac{x-y}{t\, 2^{-j-4}}\Big)\, dy\,.
\]
Because of   $5 \cdot  2^{-j-3} < \rho (x) < 15\cdot  2^{-j-3}$, $x \in \widetilde{\Omega}_j^\varepsilon$, we conclude
\[
 \Big[\frac t{32} \, 15\cdot 2^{-j-3}, ~t \, 5\cdot  2^{-j-3}\Big] {\subset} \Big[\frac t{32} \, \rho (x), t \, \rho(x)\Big]
\]
for all $x \in \widetilde{\Omega}_j^\varepsilon$.
Our restriction \eqref{ball condition} yields
\[
 |B_{t \, 2^{-j-4}} (x) \cap \Omega| \ge c \, (t \, 2^{-j-4})^d\, ,  \qquad x \in \widetilde{\Omega}_j^\varepsilon\, ,
\]
 where  $c>0$ is  independent of $j$ and $x$. Observe,  by construction
\[
B_{t \, 2^{-j-4}} (x) \cap \Omega = B_{t \, 2^{-j-4}} (x) \cap \Omega_j^\varepsilon \, , \qquad x \in \widetilde{\Omega}_j^\varepsilon\, .
\]
Now we continue our estimate of $\phi_j$ and obtain in view of \eqref{moll}
\beqq
\phi_j (x)
 & \ge &  (t\, 2^{-j-4})^{-d} \int_{B_{t \, 2^{-j-4}} (x) \cap \Omega_j^{\varepsilon}} 
\omega \Big(\frac{x-y}{t\, 2^{-j-4}}\Big)\, dy
\\
& = & \int_{E} 
\omega (z)\, dz \ge C>0
\eeqq
with $C$ independent of $x$ and $j$. Here $E$ is the image of 
$B_{t \, 2^{-j-4}} (x) \cap \Omega_j^{\varepsilon}$ under the transformation 
$z= (t\, 2^{-j-4})^{-1}\, (x-y)$.
The assumption \eqref{ball condition} yields a uniform bound from below for $|E|$ independent of $x$.
Clearly, $\sup_{x}|\phi_j (x)|\le 1$ and on $\Omega_j$ only $\phi_{j+\ell}$, $|\ell|\le 1,$ are not identically zero.
Taking into account \eqref{vollb}  we have proved
\be\label{uniform}
0 < C \le  \sum_{j=j_1}^\infty \phi_j (x) \le 2\, , \qquad x \in \Omega\, .
\ee
This allows us to proceed in the standard way: we put
\beqq
 \varphi_j (x) :=  \frac{\phi_j (x)}{\sum_{j=j_1}^\infty \phi_j (x)}\, , \qquad x \in \Omega, \quad  j \ge j_1\, .
\eeqq
 By construction these functions have all the properties as claimed.
\\
{\em Step 2.}  Because of $\supp \varphi_j \subset {\Omega}_j$ and on $\Omega_j$ only  $\varphi_{j+\ell}$, $|\ell|\le 1,$ are not identically zero,
we conclude
\begin{eqnarray}\label{ws-04}
\|\, u\, |\calk^{m}_{a,p} (\Omega,M)\|^p
& \lesssim & \sum_{|\alpha| \le m} \Big(\sum_{j=j_1}^\infty
\int_{{ \Omega \,  \cap \, \supp \varphi_j}} |\rho(x)^{|\alpha|-a} \, \partial^\alpha u (x)|^p\, dx
\nonumber
\\
& \lesssim  & \sum_{|\alpha| \le m} \Big(\sum_{j=j_1}^\infty 2^{-j(|\alpha|-a)p}
\int_{\Omega_j \cap \Omega} |\, \partial^\alpha (u \varphi_j)(x)|^p\, dx
\nonumber
\, .
\end{eqnarray}
Observe that 
\[
| \partial^\alpha \varphi_j (x)|  \lesssim \varepsilon_j^{-| \alpha |} \lesssim 2^{j | \alpha |}\, , 
\]
see \eqref{abl}. Applying this estimate with respect to the subsequence $(\varphi_{2j})_{j \ge j_1/2}$,  we find
\beq
 \sum_{|\alpha| \le m} \sum_{j \ge j_1/2} 2^{-2j(|\alpha|-a)p} && \hspace{-0.7cm}
\int_{\Omega_{2j} \cap \Omega} |\, \partial^\alpha (u \varphi_{2j})(x)|^p\, dx
\nonumber
\\
& \lesssim &  \sum_{|\alpha| \le m} \sum_{j\ge j_1/2} 
\int_{\supp \varphi_{2j}  \cap \Omega} |\,\rho(x)^{|\alpha|-a} \,  \partial^\alpha (u \varphi_{2j})(x)|^p\, dx
\nonumber
\\
& \lesssim &  \sum_{|\alpha| \le m} \sum_{\beta \le \alpha}
\sum_{j \ge j_1/2} \int_{\supp \varphi_{2j} \cap \Omega} |\, \,\rho(x)^{|\alpha|-a} \, 
\partial^\beta u (x)\, \partial^{\alpha -\beta} \varphi_{2j}(x)|^p\, dx
\nonumber
\\
& \lesssim &   \sum_{|\beta| \le m}
\sum_{j\ge j_1/2} \int_{\supp \varphi_{2j}\cap \Omega} |\, \,\rho(x)^{|\beta|-a} \, 
\partial^\beta u (x)\, |^p\, dx
\nonumber
\\
& \lesssim &   \sum_{|\beta| \le m}
 \int_{\Omega} |\, \,\rho(x)^{|\beta|-a} \, 
\partial^\beta u (x)\, |^p\, dx = \|\, u\, |\calk^{m}_{a,p} (\Omega, M)\|^p
\, ,
\eeq
where we used in the last line $\supp \varphi_{2j} \cap \supp \varphi_{2j+2} = \emptyset$.
Taking  a similar estimate with $\varphi_{2j+1}$ instead of  $\varphi_{2j}$  into account we obtain
\beq\label{ws-20}
&& \hspace*{-0.7cm}
\|\, u\, |\calk^{m}_{a,p} (\Omega,M)\|^p 
\nonumber
\\
& \sim & \sum_{|\alpha| \le m} \sum_{j=j_1}^\infty 2^{-j(|\alpha|-a)p}
\int_{\Omega_j \cap \Omega} |\, \partial^\alpha (u \varphi_j)(x)|^p\, dx 
\nonumber
\\
&\sim &  \sum_{j=j_1}^\infty \|\varphi_j u| \calk^m_{a,p}(\Omega, M)\|^p
\eeq
as claimed.
\end{proof}


\subsection*{Examples and comments}


\begin{lem}
 \label{zerlegungdereinsBeispiel1}
Let $1 \le p < \infty$, $a\in \R$, and $m \in \N$. Let $0 \le \ell <d$.
Then $\Rd \setminus \R^\ell_*$ satisfies the restrictions \eqref{ball condition} and \eqref{ball condition2} with respect to the set $M:= \R_*^\ell$. 
The decomposition of unity, constructed in the proof of Proposition \ref{zerlegungdereins}, 
has the following additional properties: 
 
\begin{itemize}
 \item $j_0 =0$, i.e., $\sum_{j=0}^\infty\varphi_j(x)= 1$   for all $x \in \Rd \setminus \R_*^\ell$;
 \item $\varphi_{0}(x) + \varphi_{1}(x) = 1 $ for all $x\in  \Omega_0$;
\item $\varphi_{j-1}(x) + \varphi_{j}(x) + \varphi_{j+1}(x) = 1 $ for all $x\in  \Omega_j$, $j \in \N$;
 \item $\varphi_j(x) = \varphi_1 (2^{j-1}x)$, $x \in \Rd$, $j \in \N$. 
 \end{itemize}
\end{lem}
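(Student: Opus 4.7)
\medskip
\noindent
\textbf{Proof plan.}
The strategy is to first verify the hypotheses of Proposition \ref{zerlegungdereins}, then identify the indices $j_0, j_1$, and finally exploit the scaling symmetry built into the construction. First I would observe that \eqref{ball condition} and \eqref{ball condition2} are trivially satisfied: since $M = \R^\ell_*$ is an affine subspace of dimension $\ell < d$, it has Lebesgue measure zero, so $|B_\lambda(x) \cap \Omega| = |B_\lambda(x)| = c_d \lambda^d$ for every $x \in \Omega$ and every $\lambda > 0$. One may therefore take, e.g., $t = 1/2$.

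Next, the bound $\rho \le 1$ immediately forces $j_0 = 0$: the set $\{x : \rho(x) \ge 2\}$ is empty (so $j_0 \ge 0$), while $\{x : \rho(x) \ge 1\} = \{x : \dist(x, M) \ge 1\}$ is nonempty. For the same reason $\{x : 5/4 < \rho(x) < 15/4\} = \emptyset$, so case (a) applies in the construction and $j_1 = j_0 = 0$. This gives $\sum_{j=0}^\infty \varphi_j(x) = 1$ on $\Omega$. The overlap structure follows at once from the fact that, by 1-Lipschitz continuity of $\rho$ and $\varepsilon_k = 2^{-k-4}$, one has $\supp \phi_k \subset \overline{\Omega_k} = \{y : 2^{-k-1} \le \rho(y) \le 2^{-k+1}\}$, and these closures intersect only for consecutive indices. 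Hence at $x \in \Omega_j$ only $\phi_{j-1}, \phi_j, \phi_{j+1}$ contribute to the denominator (and only $\phi_0, \phi_1$ when $j=0$, since there is no $\phi_{-1}$), yielding the two overlap identities.

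For the self-similarity $\varphi_j(x) = \varphi_1(2^{j-1} x)$, $j \in \N$, I would first deal with the unnormalized pieces. Because $M = \R^\ell_*$ is a linear subspace, $\dist(\cdot, M)$ is positively homogeneous, so on the region $\{\rho < 1\}$ one has $\rho(\lambda x) = \lambda \rho(x)$ for $0 < \lambda \le 1/\rho(x)$. Consequently $\Omega_j^\varepsilon = 2^{-(j-1)} \Omega_1^\varepsilon$ for $j \ge 1$, and the mollifier widths $\varepsilon_j = 2^{-(j-1)} \varepsilon_1$ scale in the same ratio. A straightforward change of variable $y = 2^{-(j-1)} z$ in \eqref{abl} then gives
\[
\phi_j(x) = \phi_1(2^{j-1} x), \qquad x \in \Rd, \quad j \ge 1.
\]

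The main obstacle lies in showing that this self-similarity survives the normalization $\varphi_j = \phi_j/\sum_k \phi_k$. One must check that $\sum_k \phi_k(x) = \sum_k \phi_k(2^{j-1} x)$ at every $x \in \supp \phi_j$ (outside $\supp \phi_j$ both sides vanish, since $\supp \phi_j = 2^{-(j-1)} \supp \phi_1$). For $x \in \supp \phi_j$ the left sum reduces to $\phi_{j-1}(x) + \phi_j(x) + \phi_{j+1}(x)$; setting $y := 2^{j-1} x \in \supp \phi_1$, the right sum reduces to $\phi_0(y) + \phi_1(y) + \phi_2(y)$. Using the unnormalized scaling for $k \ge 1$ one has $\phi_1(y) = \phi_j(x)$ and $\phi_2(y) = \phi_{j+1}(x)$, so everything hinges on the identity $\phi_0(y) = \phi_{j-1}(x)$, equivalently $\phi_0(2^{j-1} x) = \phi_{j-1}(x)$. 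The delicate point is that $\phi_0$ is \emph{not} a scaled copy of $\phi_1$ globally, because the truncation $\rho = \min(1, \dist)$ in the definition of $\chi_0$ destroys exact homogeneity; indeed $\chi_0(2z)$ and $\chi_1(z)$ differ on $\{z : \dist(z,M) \ge 31/32\}$. However, for $x \in \supp \phi_j$ with $j \ge 2$, a computation of effective mollifier widths (substituting $w = 2z$ in the integral defining $\phi_0(2y)$, which changes width $\varepsilon_0 = 2^{-4}$ into $\varepsilon_1 = 2^{-5}$) shows that $\phi_0(2y)$ and $\phi_1(y)$ are mollifications of $\chi_0(2 \cdot)$ and $\chi_1$ with the \emph{same} kernel of width $t \cdot 2^{-5}$. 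On the relevant region $\dist(z, M) \le 1/2 + t/32 < 31/32$, these two characteristic functions coincide, so the two mollifications agree. Iterating (or, equivalently, setting $y = 2^{j-2} x$ and applying the case $j=2$) yields $\phi_0(2^{j-1} x) = \phi_{j-1}(x)$ on $\supp \phi_j$, which completes the normalization step and establishes the self-similarity claim.
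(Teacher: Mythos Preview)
Your proposal is correct and follows the same scaling idea as the paper, but is considerably more careful. The paper's own proof is two sentences: it notes that $x\mapsto 2^{-j+1}x$ is a bijection $\Omega_1\to\Omega_j$ and declares ``this is enough''. You have identified and resolved a genuine subtlety that the paper glosses over, namely that the unnormalized scaling $\phi_j(x)=\phi_1(2^{j-1}x)$ for $j\ge 1$ does not by itself give $\varphi_j(x)=\varphi_1(2^{j-1}x)$, because the normalizing sum at $x\in\supp\phi_j$ involves $\phi_{j-1}$, and for $j\ge 2$ this must be matched against $\phi_0$ at the rescaled point, while $\phi_0$ is \emph{not} a dilate of $\phi_1$ globally due to the truncation in $\rho$. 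Your argument that $\chi_0(2\,\cdot)$ and $\chi_1$ agree on $\{\dist(\cdot,M)<31/32\}$, together with the observation that the mollified versions only sample this region when evaluated at the relevant points, closes this gap cleanly. The verification of $j_0=j_1=0$ and of the overlap identities is routine and matches what the paper calls ``immediate''.
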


\begin{proof}
Almost all properties of the decomposition of unity are immediate except for probably the last one.
 The sets $\Omega_j$ with respect to the pair $(\Rd\setminus \R^\ell_*, \R_*^\ell)$ have a very simple geometric structure.
 The transformation $J:~x \mapsto 2^{-j+1}x$, restricted to $\Omega_1 $, is a bijection  onto $\Omega_j$, $j \ge 1$.
 This is enough to show $\varphi_j(x) = \varphi_1 (2^{j-1}x)$, $x \in \Rd$, $j \in \N$.
\end{proof}

Mutadis mutandis one can prove also the following.

\begin{lem}\label{zerlegungderzwei}
Proposition \ref{zerlegungdereins} is applicable with respect to 
the smooth cone, see Case I, with respect to the specific nonsmooth cone, see Case II,  the specific 
dihedral domain, see Case III, and the polyhedral cone, see Case IV, always equipped with the appropriate sets $M$.
\end{lem}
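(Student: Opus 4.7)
The plan is to verify, for each of the four reference domains, the two geometric ball conditions \eqref{ball condition} and \eqref{ball condition2}; the statement then follows directly by invoking Proposition \ref{zerlegungdereins}. The common feature of all four cases is that near the singular set $M$ the domain is either exactly, or (after a suitable localization) essentially, a Lipschitz cone with vertex a point or along a lower-dimensional affine subspace, and such sets enjoy a scale-invariant interior cone condition with respect to $M$.

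First I would treat \eqref{ball condition}, i.e.\ the behaviour near $M$. In Case I, $K$ is a truncated smooth cone of opening angle $\gamma\in(0,\pi)$ with vertex $0\in M$ and $\rho(x)=|x|$ for $|x|<1$; choosing $t$ so small that $t<c_0\sin(\gamma/2)$ ensures that, for any $x\in K$ with $|x|<1/2$ and any $\lambda\in[t|x|/32,\,t|x|]$, the ball $B_\lambda(x)$ lies in the infinite cone and meets $K$ in a set of measure $\gtrsim\lambda^d$ (the measure scales like $\lambda^d$ by a homogeneity argument in $\lambda/|x|$). In Case II the argument is the same, now using $\rho(x)=\dist(x,\Gamma)$: in a neighbourhood of $\Gamma$ the cone $P$ is, up to a trivial translation along the axis, a product of $\Gamma$ with a two-dimensional cone, so choosing $t$ below a threshold depending on $\gamma$ yields the bound. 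Case III is even simpler, since near $M_\ell$ the unit cube coincides with the orthant $\{x_1,\ldots,x_{d-\ell}>0\}$ and $\rho(x)=|x'|$, giving an explicit cone condition with $t$ a fixed geometric constant. Case IV is handled by splitting $x\in Q$ near $M$ according to which edge $M_j$ attains $\dist(x,M)$; inside a small tubular neighbourhood of $M_j$ (away from the other edges and the vertex $0$) the situation reduces to Case II, and by taking $t$ small enough the required ball is entirely within that tubular neighbourhood.

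Next I would verify \eqref{ball condition2}. For $x\in\Omega$ with $\dist(x,M)\ge\sigma=2^{-j_0}$ and $\lambda\le t\sigma$, the ball $B_\lambda(x)$ is small relative to $\sigma$; since $\Omega$ in each of the four cases is Lipschitz away from $M$, one has a uniform interior cone condition on the compact subset $\{x\in\Omega:\dist(x,M)\ge\sigma/2\}$, giving $|B_\lambda(x)\cap\Omega|\gtrsim\lambda^d$ for all sufficiently small $\lambda$, hence for $\lambda\le t\sigma$ upon shrinking $t$ if necessary. In Cases I and II the relevant bound near $x_d=1$ (the flat cap of the truncated cone) is the standard interior cone condition for a Lipschitz domain; in Cases III and IV it is immediate from the polyhedral shape.

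The main obstacle is Case IV, since the set $M$ there is a union of finitely many edges meeting at the vertex $0$, so $\rho(x)$ need not coincide with the distance to any single edge. The key observation is that, for $x$ close to one particular edge $M_j$ but far from the others on the scale $\rho(x)$, one has $\dist(x,M)=\dist(x,M_j)$ and the argument of Case II applies; while for $x$ close to several edges simultaneously (i.e.\ close to $0$), one necessarily has $\rho(x)\sim|x|$ and the ball $B_{t\rho(x)}(x)$ fits inside a tangent sub-cone of $Q$ at $0$, reducing to a homogeneity argument analogous to Case I. Choosing $t$ below a common threshold depending on all edge angles of $Q$ then gives the required uniform constants.
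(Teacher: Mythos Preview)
Your verification is correct and supplies the details the paper omits entirely (the paper merely writes ``Mutatis mutandis one can prove also the following''). Your case analysis for Case IV is sound; in particular, the dichotomy you rely on is valid: if $x\in Q$ lies within $C\rho(x)$ of two distinct edges $M_j,M_k$, then since the half-lines $M_j,M_k$ meet only at $0$ with a fixed positive angle, the triangle inequality forces the feet of the perpendiculars onto $M_j,M_k$ to lie within $\lesssim\rho(x)$ of $0$, whence $|x|\lesssim\rho(x)$, while $\rho(x)\le|x|$ always holds because $0\in M$. One small imprecision: in that second sub-case $B_{t\rho(x)}(x)$ need not lie \emph{inside} $Q$ (the point $x$ may be close to a face while still having $\rho(x)\sim|x|$), but the required lower measure bound still holds since the face is flat.

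A somewhat quicker unified route is available and is perhaps what the paper has in mind. Each of the four domains satisfies a uniform interior cone condition (the paper remarks after Lemma~\ref{deco3} that all domains of generalized polyhedral type do). If $C$ is such a cone of height $h>0$, then for every $x\in\Omega$ there is a congruent copy $C_x\subset\Omega$ with vertex $x$, so that $|B_\lambda(x)\cap\Omega|\ge|B_\lambda(x)\cap C_x|\gtrsim\lambda^d$ for all $\lambda\le h$. Since $\rho\le 1$ and $\sigma\le 1$, choosing any $t\in(0,\min(1,h)]$ yields \eqref{ball condition} and \eqref{ball condition2} simultaneously, with no separate bookkeeping for the structure of $M$ required. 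Your approach has the advantage of making the dependence of the constants on the geometry (opening angles, edge angles) explicit, whereas the cone-condition shortcut hides these in the height $h$.
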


\begin{rem}\label{tool}
 \rm
(i) Those localized characterizations of Kondratiev spaces 
can be found also in Maz'ya, Rossmann \cite[Lemmas 1.2.1, 2.1.4]{MR}
for smooth cones and dihedral domains.
\\
(ii) In  \cite[3.2.3]{T78} Triebel discusses function spaces defined by localized norms as in \eqref{ws-160}.
But he is working with  $M:= \partial \Omega$.
\end{rem}

The following Lemma shows that  a set $\Omega$ need not be a Lipschitz domain in order to satisfy the assumptions in 
Proposition \ref{zerlegungdereins}.
Moreover, we provide examples to show that a pair $(\Omega, M_1)$ may satisfy the restrictions in Proposition \ref{zerlegungdereins}, whereas for a  second pair
$(\Omega, M_2)$ this need not to be true.

\begin{lem}
{\rm (i)} The example {$D_7$}  of a domain of generalized polyhedral type in $\R^3$ on page
\pageref{ex-dobrowolski}  equipped with its singular set
(edges and vertices) satisfies the restrictions in Proposition \ref{zerlegungdereins}. 
\\
{\rm (ii)} Let 
${D_8} := \{(x_1,x_2): ~ -1 < x_1 < 1, ~ \sqrt{|x_1|} < x_2< 1 \}$.
We put $M_1:= \{(0,0)\}$. Then 
the pair $(D_8,M_1)$ does not fulfil \eqref{ball condition}.
\\
{\rm (iii)} Let 
 $M_2:= \{(x_1,x_2):~-1 < x_1 < 1, ~ \sqrt{|x_1|}=x_2 \}$. Then 
the pair $(D_8,M_2)$ satisfies \eqref{ball condition} and \eqref{ball condition2}.
\end{lem}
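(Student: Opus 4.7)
The three parts require rather different techniques: (i) is a reduction to the
model cases of Lemma \ref{zerlegungderzwei} using the decomposition from Example \ref{ex-dobrowolski};
(ii) is an explicit counterexample; (iii) is a direct geometric verification.
The main technical obstacle is the area estimate in (ii), which exploits the
asymmetric scaling of $\sqrt{|x_1|}$ near the cusp at the origin.

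For (i), recall from Example \ref{ex-dobrowolski} that a neighbourhood of the
critical point of $D_7$ decomposes into nonsmooth cones and diffeomorphic images of
smooth cones, while the remainder fits directly into Definition \ref{standard}.
For $x_0 \in D_7$ bounded away from $S$ the ball condition is immediate. Otherwise
$x_0$ lies in one of the cone-type pieces; Lemma \ref{zerlegungderzwei} provides
\eqref{ball condition} and \eqref{ball condition2} for the corresponding model
domain, and the bi-Lipschitz bound \eqref{ws-05} for the associated diffeomorphism
transfers these estimates to $D_7$. Uniformity follows from the finiteness of the cover.

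For (ii), note that $\rho(x)=|x|$ on $\{x\in D_8:|x|<1\}$ and $\sigma=1$.
Consider the sequence $x_n=(n^{-2},2n^{-1})$: one has $\sqrt{|x_{n,1}|}=n^{-1}<x_{n,2}$,
so $x_n\in D_8$, and $\rho(x_n)\sim 2/n<1$ for $n\geq 3$. Set $\lambda_n:=t\rho(x_n)$,
which sits at the upper end of the range allowed by \eqref{ball condition}. Inside
$B_{\lambda_n}(x_n)$, the second coordinate satisfies $x_2\leq (2+2t)/n$, so the
constraint $x_2>\sqrt{|x_1|}$ forces $|x_1|<((2+2t)/n)^2=O(n^{-2})$, which is
much smaller than the naive $x_1$-extent $\sim n^{-1}$ of the ball. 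A direct
integration then yields $|B_{\lambda_n}(x_n)\cap D_8|=O(n^{-3})$, while
$\lambda_n^2=O(n^{-2})$, so the ratio tends to zero and \eqref{ball condition} cannot
hold with a uniform constant $c>0$.

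For (iii), the identity $\rho(x)=\min(1,\dist(x,M_2))\leq\dist(x,M_2)$, combined
with $t\in(0,1)$, ensures that any $\lambda$ admitted by \eqref{ball condition} or
\eqref{ball condition2} satisfies $\lambda<\dist(x,M_2)$, so $B_\lambda(x)$ is
disjoint from $M_2$. Moreover, for $x\in D_8$ the estimate
$\dist(x,M_2)\leq x_2-\sqrt{|x_1|}<1-|x_1|$ (using $x_2<1$ and
$\sqrt{|x_1|}>|x_1|$ when $0<|x_1|<1$) gives $\lambda<1-|x_1|$, hence
$B_\lambda(x)$ stays in the strip $-1<x_1<1$. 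Within this strip, $M_2$ is a connected
curve that separates the region $x_2>\sqrt{|x_1|}$ from $x_2<\sqrt{|x_1|}$.
Since $B_\lambda(x)$ is connected, disjoint from $M_2$, and centered in the first region,
it lies entirely in $\{x_2>\sqrt{|x_1|}\}$. The only remaining boundary of $D_8$
is $\{x_2=1\}$, and as the center satisfies $x_2<1$, at least half of the ball
lies below this line. Thus $|B_\lambda(x)\cap D_8|\geq\pi\lambda^2/2$,
verifying both \eqref{ball condition} and \eqref{ball condition2} with $c=\pi/2$.
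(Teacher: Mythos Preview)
Your argument is correct in all three parts; the paper itself states this lemma without proof, so there is nothing to compare against directly.

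A few remarks. For part (i) your piecewise transfer via Lemma \ref{zerlegungderzwei} and the bi-Lipschitz estimate \eqref{ws-05} is valid, but a shorter route is available: the paper observes (in the remark following Definition \ref{gd}) that every domain of generalized polyhedral type satisfies the cone condition. Once a uniform cone $C_x\subset D_7$ of fixed height $h>0$ is placed at each $x\in D_7$, the estimate $|B_\lambda(x)\cap D_7|\geq|B_\lambda(x)\cap C_x|\gtrsim\lambda^d$ holds for all $\lambda\leq h$, and choosing $t$ so that $t\sigma\leq h$ yields both \eqref{ball condition} and \eqref{ball condition2} at once. This bypasses the bookkeeping with overlapping pieces. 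In your version, the phrase ``for $x_0$ bounded away from $S$ the ball condition is immediate'' still hides a small step: such $x_0$ may lie close to a smooth face of $\partial D_7$, so one needs to observe that $\lambda<t\rho(x_0)<\rho(x_0)$ forces $B_\lambda(x_0)$ to meet $\partial D_7$ only along that face, whence at least a half-ball stays inside.

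Parts (ii) and (iii) are clean. In (ii) the choice $x_n=(n^{-2},2n^{-1})$ with $\lambda_n=t\rho(x_n)$ and the width bound $|y_1|<y_2^2\leq((2+2t)/n)^2$ give the $O(n^{-3})$ area estimate against $\lambda_n^2\sim n^{-2}$, exactly as you say. In (iii) the chain $\lambda<\dist(x,M_2)\leq x_2-\sqrt{|x_1|}\leq 1-|x_1|$ is the key observation; it confines $B_\lambda(x)$ to the open strip, and the graph $M_2$ then separates the strip so that the connected ball lies above it. Both inequalities use $x_2<1$ and $\sqrt{|x_1|}\geq|x_1|$ on $[0,1]$, which you state correctly.
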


\subsection{A general strategy}
\label{general}

Beginning with  the next subsection we shall employ  
Proposition \ref{zerlegungdereins} and its consequences, see Lemmas 
 \ref{zerlegungdereinsBeispiel1} and  \ref{zerlegungderzwei}, always in the following way:
 \begin{itemize}
  \item First step: localization of the underlying domain $D$ by means of Proposition \ref{zerlegungdereins}.
  \item Second step: reduction to some standard situation (unweighted Sobolev spaces defined on $\Omega_0$ or $\Omega_1$)
  by using homogeneity arguments.
  \item Third step: application of some well-known properties of $W^m_p (\Omega_0)$, $W^m_p (\Omega_1)$.
  \item Fourth step: rescaling and a second application of Proposition \ref{zerlegungdereins}. 
 \end{itemize}


\subsection{A further  equivalent norm}
\label{sect-equiv-norm}


Given  $x\in \Omega$, let $R(x)$ consist of all points $y$ in $\Omega$ such 
that the line segment joining $x$ to $y$ lies entirely in $\Omega$. Put 
\[
\Gamma(x) := \{y\in R(x): ~|y-x|<1\},
\]
and let $|\Gamma(x)|$ denote the Lebesgue measure of $\Gamma(x)$. Then $\Omega$ satisfies the {\em weak cone condition} if there exists a number $\delta>0$ 
such that 
\[
|\Gamma(x)|\geq \delta\quad \text{for all }x\in \Omega. 
\]
For classical Sobolev spaces it is known that 
\begin{equation}\label{eq-norm-sob}
\|u|W^m_p(\Omega)\|_{\ast}:= \|u|L_p(\Omega)\| + \sum_{|\alpha|=m}\|D^{\alpha}u|L_p(\Omega)\|
\end{equation}
is an equivalent norm in $W^m_p(\Omega)$ as long as the underlying domain $\Omega$ satisfies the weak cone condition, cf. \cite[Thm. 5.2]{AF03}. 

We will show that also in the case of Kondratiev spaces $\mathcal{K}^m_{a,p}(D,S)$ defined on domains of polyhedral type, 
it suffices to consider the extremal 
derivatives ($|\alpha|=m$ and $|\alpha|=0$) to obtain an  equivalent norm. 

As a preparation we shall deal with the model case $\Omega:=\real^d\setminus \real^l_{\ast}$ with   $M:=\real^l_{\ast}$.

\begin{prop}\label{hilfe}
Let $1\leq p<\infty$, $m\in \N$,  $a\in \R$, and $0\le \ell < d$. Then 
\[
\|u|\mathcal{K}^m_{a,p}(\Rd,\R^\ell_\ast)\|_{\ast}
 :=   \|\rho^{-a} \, u| L_{p}(\Rd)\| +
 \sum_{|\alpha|=m}\|\rho^{m-a} \, \partial^{\alpha}u| L_p(\Rd)\| 
\]
is an equivalent norm in $\mathcal{K}^m_{a,p}(\Rd,\R^\ell_\ast)$. 
\end{prop}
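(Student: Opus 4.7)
The inequality $\|u|\mathcal K^m_{a,p}(\Rd,\R^\ell_\ast)\|_\ast\le \|u|\mathcal K^m_{a,p}(\Rd,\R^\ell_\ast)\|$ is immediate, since the star-norm is obtained by simply dropping the intermediate-order summands. For the converse my plan is to follow, almost verbatim, the four-step strategy of Section~\ref{general}; it suffices to show that for every multi-index $\alpha$ with $0<|\alpha|<m$ one has
\[
\int_{\Rd}\rho(x)^{(|\alpha|-a)p}|\partial^\alpha u(x)|^p\,dx \lesssim \|u|\mathcal K^m_{a,p}(\Rd,\R^\ell_\ast)\|_\ast^p,
\]
and then combine this with the two terms that are already present in the star-norm.

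First I would localise by means of the dyadic decomposition of Proposition~\ref{zerlegungdereins}. By Lemma~\ref{zerlegungdereinsBeispiel1} the pair $(\Rd,\R^\ell_\ast)$ satisfies the geometric assumptions of that proposition with $j_0=0$, and the annuli $\Omega_j=\{x\in\Rd:\,2^{-j-1}<\rho(x)<2^{-j+1}\}$, $j\ge 0$, cover $\Rd\setminus\R^\ell_\ast$ with bounded overlap. The key geometric observation from the same lemma is that the dilation $y\mapsto 2^{-j+1}y$ maps $\Omega_1$ bijectively onto $\Omega_j$. Setting $\tilde u_j(y):=u(2^{-j+1}y)$ and changing variables, together with $\rho(x)\sim 2^{-j}$ on $\Omega_j$ and $\rho(y)\sim 1$ on $\Omega_1$, yields for every $\beta$ with $|\beta|\le m$ the \emph{universal} scaling relation
\[
\int_{\Omega_j}\rho(x)^{(|\beta|-a)p}|\partial^\beta u(x)|^p\,dx \;\sim\; 2^{j(ap-d)}\,\|\partial^\beta\tilde u_j|L_p(\Omega_1)\|^p,
\]
with constants depending only on $d$, $a$, $p$, $m$ but not on $j$ or $u$. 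Note the exponent $2^{j(ap-d)}$ is the same for all $\beta$, which is the whole point of working in the Kondratiev scale.

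The third step is the application of the classical Sobolev norm equivalence on the fixed reference domain $\Omega_1=\{y\in\Rd:\,1/4<|y'|<1\}$. Since $\Omega_1$ satisfies a uniform cone condition, \cite[Thm.~5.2]{AF03} in the form \eqref{eq-norm-sob} gives
\[
\|\partial^\alpha\tilde u_j|L_p(\Omega_1)\|\;\lesssim\;\|\tilde u_j|L_p(\Omega_1)\|+\sum_{|\beta|=m}\|\partial^\beta\tilde u_j|L_p(\Omega_1)\|\,.
\]
Transporting this estimate back to $\Omega_j$ by means of the scaling relation above, and then summing over $j\ge 0$ while using the bounded overlap of the $\Omega_j$ to pass from $\sum_j\int_{\Omega_j}$ back to $\int_{\Rd}$, produces exactly the bound claimed above.

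The one technical point that requires attention is the application of the Sobolev equivalence on $\Omega_1$: whenever $\ell\ge 1$ this reference domain is the product of a bounded annulus in $\R^{d-\ell}$ with $\R^\ell$, hence unbounded, whereas \cite[Thm.~5.2]{AF03} is most commonly stated for bounded domains. I would circumvent this either by covering $\Omega_1$ with a locally finite family of translates of a single bounded Lipschitz piece (of fixed shape) and patching the local equivalences via the bounded-overlap property, or by extending $\tilde u_j$ from $\Omega_1$ to $\Rd$ via the Stein operator and invoking the Fourier characterisation of $W^m_p(\Rd)$. Apart from this verification everything is routine scaling bookkeeping, exactly of the type described in Section~\ref{general}.
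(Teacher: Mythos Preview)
Your approach is essentially the same as the paper's: localize to the dyadic annuli $\Omega_j$, rescale to the fixed reference domain $\Omega_1$, apply the classical Sobolev norm equivalence \eqref{eq-norm-sob} there, and rescale back. The only cosmetic difference is that the paper routes the localization through the partition of unity $(\varphi_j)_j$ of Proposition~\ref{zerlegungdereins}, whereas you work directly with the bounded overlap of the $\Omega_j$; your version is in fact slightly leaner.

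Two small remarks. First, your scaling relation ``$y\mapsto 2^{-j+1}y$ maps $\Omega_1$ onto $\Omega_j$'' holds only for $j\ge 1$; the set $\Omega_0=\{x:\rho(x)>1/2\}$ is not a dilate of $\Omega_1$ (it contains the whole region $|x'|\ge 1$ where $\rho\equiv 1$) and must be treated separately, exactly as the paper does, by observing that $\rho\sim 1$ there so the weighted and unweighted norms coincide. Second, your worry about $\Omega_1$ being unbounded when $\ell\ge 1$ is unnecessary: the equivalence \eqref{eq-norm-sob} from \cite[Thm.~5.2]{AF03} is stated for arbitrary domains satisfying the (weak) cone condition, with no boundedness assumption, and $\Omega_1$ (as well as $\Omega_0$) clearly has this property. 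So neither of your proposed workarounds is needed.
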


\begin{proof}
Recall that in this model case we have the explicit expression \eqref{eq-weight-model} for the weight function $\rho$.
We shall employ the partition of unity $\{\varphi_j\}_{j\in \nat_0}$ from Proposition \ref{zerlegungdereins} with 
\[
\supp \varphi_j\subset \Omega_j:=\{x\in \real^d: \ 2^{-j-1}<\rho(x)<2^{-j+1}\}, \qquad j\in \nat_0.   
\]
Moreover, $\{\varphi_j\}_{j\in \nat_0}$ satisfies the properties from Lemma \ref{zerlegungdereinsBeispiel1}. In particular,  
the functions $\varphi_j$ have finite overlap and $\varphi_j(x)=\varphi_1(2^{j-1}x)$ for all $x$ and all $j \in \N$. 
 With this we estimate 
\begin{align}\label{gut}
\|u|&\mathcal{K}^m_{a,p}(\real^d, \real^l_{\ast})\|\\
&\sim \left(\sum_{|\alpha|\leq m}\sum_{j=0}^{\infty}\int_{2^{-j-1}<\rho(x)<2^{-j+1}}
|\rho(x)^{(|\alpha|-a)}\, \partial^{\alpha}(\varphi_ju)(x)|^p\ud x\right)^{1/p}\notag\\
&\sim \left(\sum_{|\alpha|\leq m}\sum_{j=0}^{\infty}\int_{\Omega_j}2^{-(j-1)(|\alpha|-a)p}|\partial^{\alpha}(\varphi_ju)(x)|^p\ud x\right)^{1/p}\, , 
\notag
\end{align}
where for technical reasons we used $2^{-(j-1)(|\alpha|-a)p}$ instead of $2^{-j(|\alpha|-a)p}$ in the second step. 
A homogeneity argument, applied to the terms with $j\geq 1$, yields 
\begin{eqnarray*}
A& := & \left(\sum_{|\alpha|\leq m}\int_{\Omega_j}  2^{-(j-1)(|\alpha|-a)p}|\partial^{\alpha}(\varphi_ju)(x)|^p\ud x\right)^{1/p}
\\ 
& = & \left(\sum_{|\alpha|\leq m}2^{(j-1)ap}2^{-(j-1)d}\int_{\Omega_1}|\partial^{\alpha}((\varphi_ju)(2^{-j+1}\, \cdot\, ))(y)|^p\ud y\right)^{1/p}
\\
& = & 2^{(j-1)a}2^{-(j-1)d/p}\|(\varphi_ju)(2^{-j+1}\cdot)|W^m_p(\Omega_1)\|\, .
\end{eqnarray*} 
Since $\Omega_1$ has the weak cone property we conclude from  \eqref{eq-norm-sob},
\begin{eqnarray*}
A &\sim & 
\Bigg(\sum_{|\alpha|= m}2^{(j-1)ap}2^{-(j-1)d}\int_{\Omega_1}|\partial^{\alpha}((\varphi_ju)(2^{-j+1}\, \cdot\, ))(y)|^p\ud y 
 \notag
\\
&&  \qquad \quad 
+ \quad 2^{(j-1)ap}2^{-(j-1)d}\int_{\Omega_1}|(\varphi_ju)(2^{-j+1}y)|^{p}\ud y \Bigg)^{1/p}\, .
\end{eqnarray*}
Now it easy to see that the right-hand side is equivalent to
\[
\left(\sum_{|\alpha|= m}2^{(j-1)ap}\int_{\Omega_j}|2^{-(j-1)|\alpha|}\, \partial^{\alpha}(\varphi_ju)(x)|^p\ud x + 2^{(j-1)ap}\int_{\Omega_j}|(\varphi_ju)(x)|^p\ud x
\right)^{1/p}
\]
\[
\sim  \left(\sum_{|\alpha|= m}\int_{\Omega_j}|\rho(x)^{|\alpha|-a}\, \partial^{\alpha}(\varphi_ju)(x)|^p\ud x + \int_{\Omega_j}|\rho(x)^{-a}(\varphi_ju)(x)|^p\ud x
\right)^{1/p} \, . \label{est-1}
\]
On the other hand, for the term $j=0$, we easily see that 
\begin{eqnarray*}
\left(\sum_{|\alpha|\leq m}\int_{\Omega_0}|\rho(x)^{|\alpha|-a}\, \partial^{\alpha}(\varphi_0u)(x)|^p\ud x\right)^{1/p}
& \sim & \left(\sum_{|\alpha|\leq m}\int_{\Omega_0}|\partial^{\alpha}(\varphi_0u)(x)|^p\ud x\right)^{1/p}
\\
& = & \|\varphi_0 u|W^m_p(\Omega_0)\| 
\end{eqnarray*}
and 
\begin{eqnarray*}
\|\varphi_0 u|W^m_p(\Omega_0)\| 
&\sim &  \left(\sum_{|\alpha|= m}\int_{\Omega_0}|\partial^{\alpha}(\varphi_0u)(x)|^p\ud x 
+\int_{\Omega_0}|(\varphi_0u)(x)|^p\ud x
\right)^{1/p}
\\
&\sim &   \left(\sum_{|\alpha|= m}\int_{\Omega_0}|\rho(x)^{|\alpha|-a}\, \partial^{\alpha}(\varphi_0u)(x)|^p\ud x 
+\int_{\Omega_0}|\rho(x)^{-a}(\varphi_0u)(x)|^p\ud x
\right)^{1/p}, \label{est-2}
\end{eqnarray*} 
Inserting these estimates into \eqref{gut} we find 
\begin{align*}
\|u|\mathcal{K}^m_{a,p}(\real^d, \real^l_{\ast})\|&\sim  \left(\sum_{|\alpha|= m}\int_{\real^d\setminus \real^l_{\ast}}|\rho(x)^{|\alpha|-a}\, \partial^{\alpha}u(x)|^p\ud x
+ \int_{\real^d\setminus \real^l_{\ast}}|\rho(x)^{-a}u(x)|^p\ud x
\right)^{1/p}\\
&= \|u|\mathcal{K}^m_{a,p}(\real^d, \real^l_{\ast})\|_{\ast}.
\end{align*}
The proof is complete.
\end{proof}

\begin{satz}\label{norms}
Let the pair $(D,M)$ be a Lipschitz domain of polyhedral type s.t. $\Lambda_5=\emptyset$.
Furthermore, let $1\leq p<\infty$, $m\in \N$, and $a\in \R$. Then 
\[
\|u|\mathcal{K}^m_{a,p}(D,M)\|_{\ast}
 :=   \|\rho^{-a}\, u| L_{p}(D)\| +
 \sum_{|\alpha|=m}\|\rho^{m-a} \, \partial^{\alpha}u| L_p(D)\| 
\]
is an equivalent norm in $\mathcal{K}^m_{a,p}(D,M)$. 
\end{satz}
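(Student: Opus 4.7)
The inequality $\|u|\mathcal{K}^m_{a,p}(D,M)\|_\ast \leq \|u|\mathcal{K}^m_{a,p}(D,M)\|$ is immediate, since the $\ast$-norm is obtained from the full Kondratiev norm by keeping only the terms of order $0$ and of order $m$. The task is therefore to establish the converse estimate $\|u|\mathcal{K}^m_{a,p}(D,M)\| \lesssim \|u|\mathcal{K}^m_{a,p}(D,M)\|_\ast$, and the strategy is to reduce to the reference situations via Lemma \ref{deco} and then invoke Proposition \ref{hilfe} and its immediate analogs. Concretely, I would apply Lemma \ref{deco} to decompose $u = \sum_i u_i$ with $u_i := u \cdot \varphi_i$ subordinate to a covering $(U_i)_i$ as in Definition \ref{standard}. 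The hypothesis $\Lambda_5 = \emptyset$ excludes polyhedral cones, so every piece falls into one of the Cases I-III or into $\Lambda_1$.

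For $i \in \Lambda_1$ the weight $\rho$ is bounded above and below on $\overline{U_i \cap D}$, so $\calk^m_{a,p}$ coincides with $W^m_p(D\cap U_i)$ up to equivalence of norms. Since $D$ is Lipschitz, $D\cap U_i$ has the weak cone condition, and \cite[Thm.~5.2]{AF03} (the unweighted relation \eqref{eq-norm-sob}) yields exactly the $\ast$-norm equivalence. For $i \in \Lambda_2, \Lambda_3, \Lambda_4$, by Lemma \ref{diffeo} I pull back $u_i$ via $\eta_i$ to the relevant reference domain $R \in \{K, P, I\}$; then by Proposition \ref{extension} the Stein operator $\mathfrak{E}$ maps this pull-back boundedly into $\calk^m_{a,p}(\R^d,\R^\ell_\ast)$, where Proposition \ref{hilfe} applies. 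Chaining the bounds
\[
\|v|\calk^m_{a,p}(R)\| \;\leq\; \|\mathfrak{E}v|\calk^m_{a,p}(\R^d,\R^\ell_\ast)\|
\;\sim\; \|\mathfrak{E}v|\calk^m_{a,p}(\R^d,\R^\ell_\ast)\|_\ast
\;\lesssim\; \|v|\calk^m_{a,p}(R)\|_\ast
\]
yields the $\ast$-norm equivalence on the reference domain $R$; the last step uses that Stein's reflection on a half-space type geometry (cone, dihedral) produces derivatives of order $|\alpha|$ involving only derivatives of $v$ of the same order $|\alpha|$, so the weighted $L_p$ bounds of orders $0$ and $m$ transfer individually.

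Finally, the pieces are reassembled using Lemma \ref{deco}(i): since each $u_i$ on its reference domain satisfies the $\ast$-norm equivalence just established, the maximum in the decomposition is controlled, giving the full equivalence on $(D,M)$. The main obstacle is the third inequality of the displayed chain above -- namely, that $\mathfrak{E}$ is bounded not only in the full Kondratiev norm (as stated in Proposition \ref{extension}) but also in the truncated $\ast$-norm. Equivalently, one must rule out that the lower-order derivatives of $v$ hidden inside the $\calk^m_{a,p}(R)$-norm are essential on the model side; alternatively, one can bypass this issue by adapting the proof of Proposition \ref{hilfe} directly to each of $K,P,I$ using Proposition \ref{zerlegungdereins} together with Lemma \ref{zerlegungderzwei} and the scaling relations of the reference domains, which is the cleaner route when the extension's filtration-preserving property is not available off the shelf.
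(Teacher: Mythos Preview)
Your proposal is correct and follows essentially the same route as the paper. Both arguments reduce to the reference domains of Cases I--III via Lemma~\ref{deco}, then establish the $\ast$-equivalence on the reference domain by extending to $\calk^m_{a,p}(\R^d,\R^\ell_*)$ with Stein's operator and invoking Proposition~\ref{hilfe}; the paper makes exactly the claim you isolate as the ``main obstacle'', namely that the weighted $L_p$-estimates for $\partial^\alpha(\mathfrak{E}u)$ of order $m$ involve only derivatives of $u$ of the same order $m$, so that $\mathfrak{E}$ is bounded in the truncated $\|\cdot\|_\ast$-norm as well. One minor addition in the paper: it first passes to $u\in C_*^\infty$ by density before applying $\mathfrak{E}$, which you omit but which is a routine technicality.
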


\begin{proof}
{\em Step 1}. For simplicity we deal with Case  I, the smooth cone $K$ with $M=\{0\}$. 
As mentioned before, see the list of basic properties in Subsection \ref{domains},
$C_*^\infty (K,\{0\})$ is a dense subset in $\mathcal{K}^m_{a,p}(K,\{0\})$. 
Let $u \in C_*^\infty (K,\{0\})$. Then it is readily checked that
$\mathfrak{E}u\in C_0^\infty(\R^d\setminus\{0\})$, where $\mathfrak{E}$ denotes Stein's extension operator. A closer inspection of the proof of Proposition \ref{extension} presented in \cite{Hansen} reveals that the estimates for the (weighted $L_p$-norms of) partial derivatives of $\mathfrak{E}u$ of order $m$ involve only partial derivatives of $u$ likewise of order $m$, thus we find
\[
	\|\mathfrak{E}u|\mathcal{K}^m_{a,p}(\Rd,\{0\})\|_*
		\leq c\,\|u|\mathcal{K}^m_{a,p}(K,\{0\})\|_*\,.
\]
Hence with the help of Proposition \ref{hilfe} we conclude
\begin{align*}
	\|u|\mathcal{K}^m_{a,p}(K,\{0\})\|
		&\leq\|\mathfrak{E}u|\mathcal{K}^m_{a,p}(\R^d,\{0\})\|\\
		&\leq c\,\|\mathfrak{E}u|\mathcal{K}^m_{a,p}(\R^d,\{0\})\|_*
			\leq c\,\|u|\mathcal{K}^m_{a,p}(K,\{0\})\|_*\,.
\end{align*}

{\em Step 2.}
Clearly, the Cases II and III can be handled in a similar fashion as in Step 1.
The case of a general domain $D$ of polyhedral type with singularity set $S$ can be reduced to those standard situations with the help of Lemma \ref{deco}. Note that the assumptions of Proposition \ref{extension} prevent the presence of subdomains as in Case IV.
\end{proof}


\section{Continuous embeddings}


With the help of the localization result from Subsection \ref{local1}
Sobolev-type embeddings 
for Kondratiev spaces  can now  be traced back to corresponding results for unweighted Sobolev spaces.
In a first step we deal with the model case $\Omega := \Rd  \setminus  \R_*^\ell$ and $M: = \R_*^\ell$.


\subsection{Continuous embeddings in the model case}


Again we proceed as described in Subsection \ref{general}.

\begin{prop}\label{lemma-embedding-sobolev}
Let $1 \le p \le q < \infty$, $m \in \N$,  and $a\in \R$.
Let $0 \le \ell < d$.
Then 	$\calk^m_{a,p}(\Rd, \R_*^\ell)$ is embedded into $\calk^{m'}_{a',q}(\Rd, \R_*^\ell)$ if, and only if, 
\be\label{ws-24}
		m-\frac{d}{p}\geq m'-\frac{d}{q}\qquad\text{and}\qquad
		a-\frac{d}{p} \geq a'-\frac{d}{q}\,.
\ee
\end{prop}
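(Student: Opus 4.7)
The plan follows the four-step template from Subsection \ref{general}: localise via Proposition \ref{zerlegungdereins}, rescale each dyadic piece to the reference set $\Omega_1$, invoke classical Sobolev embedding there, and reassemble.

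For sufficiency, I would apply the partition of unity $\{\varphi_j\}_{j\ge 0}$ from Lemma \ref{zerlegungdereinsBeispiel1}, giving $\|u|\calk^m_{a,p}(\Rd,\R^\ell_*)\|^p \sim \sum_{j\ge 0}\|\varphi_j u|\calk^m_{a,p}(\Rd,\R^\ell_*)\|^p$ and analogously for the target. On $\supp\varphi_j$ one has $\rho\sim 2^{-j}$; setting $v_j(y):=(\varphi_j u)(2^{1-j}y)$ transports $\varphi_j u$ into a function supported in $\Omega_1$, and the dilation $y=2^{j-1}x$ together with the scaling of derivatives produces $\|\varphi_j u|\calk^m_{a,p}\|^p \sim 2^{j(ap-d)}\|v_j|W^m_p(\Omega_1)\|^p$, with the analogous identity for the target norm. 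Since $\Omega_1$ satisfies the (uniform) cone condition, the classical Sobolev embedding $W^m_p(\Omega_1)\hookrightarrow W^{m'}_q(\Omega_1)$ holds under $m-d/p\ge m'-d/q$ and $p\le q$; reversing the scaling then yields $\|\varphi_j u|\calk^{m'}_{a',q}\|\lesssim 2^{-j((a-d/p)-(a'-d/q))}\|\varphi_j u|\calk^m_{a,p}\|$. The second hypothesis $a-d/p\ge a'-d/q$ bounds the prefactor uniformly in $j\ge 1$, and summing over $j$ using the embedding $\ell^p\hookrightarrow \ell^q$ (valid since $p\le q$) closes the argument; the term $j=0$ is handled directly, because $\rho \sim 1$ on $\supp\varphi_0$ reduces the estimate to ordinary Sobolev embedding on a bounded Lipschitz piece.

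For necessity, I would use two families of test functions. To force $m-d/p\ge m'-d/q$, fix $\psi\in C_c^\infty$ supported in a small ball around a point $x_0$ with $\rho(x_0)=1$, and set $\psi_\lambda(x):=\psi(\lambda(x-x_0))$, $\lambda\to\infty$. Because $\rho\equiv 1$ on $\supp\psi_\lambda$, both Kondratiev norms collapse to the corresponding Sobolev norms, and the standard scaling $\|\psi_\lambda|W^k_r\|\sim \lambda^{k-d/r}$ forces the smoothness condition. To force $a-d/p\ge a'-d/q$, take $\psi\in C_c^\infty(\Omega_1)$ nontrivial and set $u_j(x):=\psi(2^{j-1}x)$, so that $\supp u_j\subset\Omega_j$ with $\rho\sim 2^{-j}$ there; the same scaling computation yields $\|u_j|\calk^m_{a,p}\|\sim 2^{j(a-d/p)}$ and $\|u_j|\calk^{m'}_{a',q}\|\sim 2^{j(a'-d/q)}$, so letting $j\to\infty$ gives the weight condition.

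The main obstacle will be the bookkeeping in the two-parameter scaling: one has to keep the derivative order $|\alpha|$ and the scale index $j$ coupled correctly through the dilation, and to check that the exceptional $j=0$ term is compatible with the uniform estimates for $j\ge 1$. A secondary point is that when $\ell\ge 1$ the reference domain $\Omega_1=\{x:1/4<|x'|<1\}$ is unbounded in the tangential variable $x''$; however $\Omega_1$ still satisfies the uniform cone condition, so the Sobolev embedding applies without any further localisation in $x''$, and the $\ell=0$ and $\ell\ge 1$ cases can be treated uniformly.
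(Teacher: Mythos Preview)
Your proposal is correct and follows essentially the same route as the paper: localise via the dyadic partition of unity, rescale each piece to the reference set $D_1$, apply the classical Sobolev embedding $W^m_p(D_1)\hookrightarrow W^{m'}_q(D_1)$ (valid under the cone condition even when $D_1$ is unbounded for $\ell\ge 1$), and reassemble using $\ell^p\hookrightarrow\ell^q$; the paper phrases the final summation as $\ell_1\hookrightarrow\ell_{q/p}$, which is the same thing. Your necessity arguments are likewise the paper's, with the cosmetic difference that you use dyadic dilates $\psi(2^{j-1}\,\cdot\,)$ where the paper uses the continuous family $u(\lambda\,\cdot\,)$, $\lambda>1$.
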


\begin{proof}
{\em Step 1.} We shall apply the decomposition of unity as constructed in  Proposition
	\ref{zerlegungdereins} and with the additional properties as described in Lemma \ref{zerlegungdereinsBeispiel1}. Similar to \eqref{Dj-set} we put 
\begin{equation}\label{eq-Dj}
		D_j := \{x\in\R^d:2^{-j-1} < \rho (x) <  2^{-j+1}\},\quad j\in\N_0\, , 
\end{equation}
and $\rho (x) = \min (1, |x'|)$, $x=(x',x'')$, $x' \in \R^{d-\ell}$, $x'' \in \R^\ell$.
We choose $\varepsilon_j := 2^{-j-4}$, $j\in \N_0$. 
Recall,  $\varphi_j (x) = \varphi_1 (2^{j-1}x)$, $j \in \N$. 
In view of formula \eqref{ws-20} we shall consider  the terms $\sum_{|\alpha| \le m'} 2^{-j(|\alpha|-a')q}
\int_{D_j} |\, \partial^\alpha (u \varphi_j)(x)|^q\, dx$.
For technical reasons we  replace $2^{-j(|\alpha|-a')q}$ by $2^{-(j-1)(|\alpha|-a')q}$.
A transformation of coordinates $x:= 2^{-j+1}y$ and the just mentioned homogeneity property of the system $(\varphi_j)_j$ yield
\beqq
 \sum_{|\alpha| \le m'}  && \hspace{-0.7cm} 2^{-(j-1)(|\alpha|-a')q}
\int_{D_j} |\, \partial^\alpha (u \varphi_j)(x)|^q\, dx
\\
& = & \sum_{|\alpha| \le m'}  2^{-(j-1)(|\alpha|-a')q}
\int_{D_1} |\, \partial^\alpha (u \varphi_j)(2^{-j+1}y)|^q\, 2^{(-j+1)d}\, dy
\\
& = & \sum_{|\alpha| \le m'}  2^{-(j-1)(|\alpha|-a')q} 2^{(-j+1)d}\, 2^{(j-1)|\alpha|q}\,
\int_{D_1} \Big|\, \partial^\alpha \Big(u (2^{-j+1}\, \cdot \, ) \varphi_1)(\, \cdot\, )\Big) (y)\Big|^q\,  dy
\\
& = &   2^{(j-1)a'q} 2^{(-j+1)d}\, \| \, u (2^{-j+1}\, \cdot \, )\,  \varphi_1|W^{m'}_q(D_1)\|^q\, .
\eeqq	
Here $W^{m'}_q(D_1)$ denotes the standard Sobolev space with parameters $m'$ and $q$ on $D_1$.
Clearly, 
\be\label{ws-21}
W^{m}_p(D_1) \hookrightarrow W^{m'}_q(D_1) \qquad \mbox{if} \qquad m -d/p \ge m'-d/q\, , 1 \le p \le q<\infty\, , 
\ee
see \cite[5.1]{Ada}. 
Therefore, we obtain
\beq \label{est-0}
 \sum_{|\alpha| \le m'}  && \hspace{-0.7cm} 2^{-(j-1)(|\alpha|-a')q}
\int_{D_j} |\, \partial^\alpha (u \varphi_j)(x)|^q\, dx \notag
\\
& \lesssim  &   2^{(j-1)a'q} 2^{(-j+1)d}\, \| \, u (2^{-j+1}\, \cdot \, )\,  \varphi_1|W^{m}_p(D_1)\|^q
\eeq
with hidden constants independent of $j$ and $u$.
By applying the same homogeneity arguments as above, but in reversed order, we find
\beqq
\| \, u (2^{-j+1}\, \cdot \, )\,  \varphi_1|W^{m}_p(D_1)\|^q & = &  
\Big(\sum_{|\alpha| \le m} \int_{D_1} \, | 2^{(-j+1)|\alpha|} \, \partial^\alpha  (u \varphi_j) (2^{-j+1}y)|^p \, dy \Big)^{q/p}
\\
& = &  
\Big(\sum_{|\alpha| \le m} 2^{(-j+1)|\alpha|p}\, 2^{(j-1)d} \int_{D_j} \, | \partial^\alpha  (u \varphi_j) (x)|^p \, dx \Big)^{q/p}
\eeqq
Inserting this into \eqref{est-0} we obtain
\beqq
 \sum_{|\alpha| \le m'}  && \hspace{-0.7cm} 2^{-(j-1)(|\alpha|-a')q}
\int_{D_j} |\, \partial^\alpha (u \varphi_j)(x)|^q\, dx
\\
& \lesssim &
 2^{(j-1)a'q} 2^{(-j+1)d}\,
\Big(\sum_{|\alpha| \le m} 2^{(-j+1)|\alpha|p}\, 2^{(j-1)d} \int_{D_j} \, | \partial^\alpha  (u \varphi_j) (x)|^p \, dx \Big)^{q/p}
\\
& \lesssim &
 \Big(
\sum_{|\alpha| \le m} 2^{(j-1)( \frac dp - \frac dq-|\alpha| + a')p} \int_{D_j} \, | \partial^\alpha  (u \varphi_j) (x)|^p \, dx \Big)^{q/p}
\\
& \lesssim &
 \Big(
\sum_{|\alpha| \le m} 2^{(j-1)( \frac dp - \frac dq + a'-a)p} 
\int_{D_j} \, |2^{-j(|\alpha|-a))} \partial^\alpha  (u \varphi_j) (x)|^p \, dx \Big)^{q/p}\, .
\eeqq
Obviously $2^{-j(|\alpha|-a)}	\asymp \rho(x)^{|\alpha|-a}$ on $D_j$.
By assumption 
$ \frac dp - \frac dq + a'-a \le 0 $.
Hence
\beqq
\sum_{|\alpha| \le m'} && \hspace{-0.7cm} \sum_{j=1}^\infty 2^{-j(|\alpha|-a')q}
\int_{D_j} |\, \partial^\alpha (u \varphi_j)(x)|^q\, dx
\\
& \lesssim &
\sum_{j=1}^\infty  \Big(\sum_{|\alpha| \le m}  
\int_{D_j} \, |\, \rho (x)^{|\alpha|-a} \partial^\alpha  (u \varphi_j) (x)|^p \, dx \Big)^{q/p}
\eeqq
Next we shall use that $\ell_1 \hookrightarrow \ell_{q/p}$. This yields
\beqq
\sum_{|\alpha| \le m'} && \hspace{-0.7cm} \sum_{j=1}^\infty 2^{-j(|\alpha|-a')q}
\int_{D_j} |\, \partial^\alpha (u \varphi_j)(x)|^q\, dx
\\
\\
& \lesssim & \Big(\sum_{j=1}^\infty  \sum_{|\alpha| \le m}  
\int_{D_j} \, |\, \rho (x)^{|\alpha|-a} \partial^\alpha  (u \varphi_j) (x)|^p \, dx \Big)^{q/p}\, .
\eeqq
For the term with $j=0$ it is enough to apply the Sobolev embedding \eqref{ws-21} with $D_1$ replaced by $D_0$.
In view of \eqref{ws-20} this proves sufficiency of our conditions.
\\
{\em Step 2.} Necessity. The necessity of 
$m -d/p \ge m'-d/q$ is part of the classical Sobolev theory, we refer to \cite[5.2.4]{Ada}.
It remains to prove the necessity of 
$a-d/p \ge a'-d/q$. Therefore we choose a non-trivial function 
$u \in C_0^\infty (\Rd)$ such that $\supp u \subset \{x\in \Rd:~ 0<|x'|<1\}$.
Such  a function and all dilated versions $u(\lambda\, \cdot )$, $\lambda >0$,  belong to all spaces  
$\calk^m_{a,p}(\Rd, \R_*^\ell)$.  
Observe the following homogeneity property for values $\lambda>1$:
\beq\label{ws-23}
\| \, u(\lambda\, \cdot \, )|\calk^m_{a,p}(\Rd, \R_*^\ell)\|^p & = &  
\sum_{|\alpha| \le m}  \int_{\Rd} \lambda^{|\alpha|p}\, 
|\rho(x)^{|\alpha|-a} \, \partial^\alpha u (\lambda x)|^p\, dx
\nonumber
\\
 & = &  
\sum_{|\alpha| \le m}  \lambda^{|\alpha|p-d}\, \int_{\supp u} 
|\rho(y/\lambda)^{|\alpha|-a} \, \partial^\alpha u (y)|^p\, dy
\nonumber
\\
& \asymp &  
\sum_{|\alpha| \le m}  \lambda^{ap - d }\, \int_{\supp u} 
|\rho(y)^{|\alpha|-a} \, \partial^\alpha u (y)|^p\, dy
\nonumber
\\
& = & \lambda^{ap - d } \, \| \, u \, |\calk^m_{a,p}(\Rd, \R_*^\ell)\|^p \, .
\eeq
We assume $\calk^m_{a,p}(\Rd, \R_*^\ell) \hookrightarrow \calk^{m'}_{a',q}(\Rd, \R_*^\ell)$.
This implies the existence of a positive constant $c$ such that 
\[
 \| \, u(\lambda\, \cdot \, )|\calk^{m'}_{a',q}(\Rd, \R_*^\ell)\| \le c\, \| \, u(\lambda\, \cdot \, )|\calk^m_{a,p}(\Rd, \R_*^\ell)\|
\]
 holds for all $\lambda \ge 1$ and in view of \eqref{ws-23} $a'-d/q \le a-d/p$ as claimed.
\end{proof}

The counterpart for $q=\infty$ can be formulated as follows.

\begin{prop}\label{lemma-embedding-sobolevinfty}
Let $a\in \R$, $m \in \N$, and $0 \le \ell < d$.
\\
{\rm (i)} Let $1 < p  < \infty$. 
Then 	$\calk^m_{a,p}(\Rd, \R_*^\ell)$ is embedded into $\calk^{m'}_{a',\infty}(\Rd, \R_*^\ell)$ if, and only if, 
\[
m-\frac{d}{p} > m'\qquad\text{and} \qquad a-\frac{d}{p} \geq a'\,.
\]
{\rm (ii)} Let $ p = 1$. 
Then 	$\calk^m_{a,1}(\Rd, \R_*^\ell)$ is embedded into $\calk^{m'}_{a',\infty}(\Rd, \R_*^\ell)$ if, and only if, 
\[
m-{d} \ge  m'\qquad\text{and} \qquad a-d \geq a'\,.
\]
\end{prop}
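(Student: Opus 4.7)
The proof follows the same four-step recipe described in Subsection \ref{general} and used for Proposition \ref{lemma-embedding-sobolev}; the only genuine change is that at the level of the localized pieces we now invoke the classical embedding $W^m_p(D_1)\hookrightarrow W^{m'}_\infty(D_1)$ instead of a Sobolev embedding into $L_q$.

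For sufficiency, I would take the decomposition of unity $(\varphi_j)_{j\ge 0}$ from Proposition \ref{zerlegungdereins} with the homogeneity property $\varphi_j(x)=\varphi_1(2^{j-1}x)$ from Lemma \ref{zerlegungdereinsBeispiel1}. Since the supports have finite overlap and $\rho(x)\sim 2^{-j}$ on $D_j$, I would first observe that
\[
\|u|\calk^{m'}_{a',\infty}(\R^d,\R^\ell_*)\|
	\sim \sup_{j\ge 0}\sum_{|\alpha|\le m'}2^{-j(|\alpha|-a')}\,\|\partial^\alpha(\varphi_j u)|L_\infty(D_j)\|\,.
\]
Setting $v(y):=(\varphi_j u)(2^{-j+1}y)=u(2^{-j+1}y)\varphi_1(y)$ and using the chain rule $\partial^\alpha(\varphi_j u)(x)=2^{(j-1)|\alpha|}\partial^\alpha v(y)$ converts the $j$-th summand into $2^{ja'}\|v|W^{m'}_\infty(D_1)\|$ (up to a fixed constant). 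At this point I would invoke
\[
W^m_p(D_1)\hookrightarrow W^{m'}_\infty(D_1)
\]
which holds for $1<p<\infty$ when $m-\tfrac{d}{p}>m'$, and for $p=1$ when $m-d\ge m'$; see \cite[5.2.3]{Ada}. Transforming the resulting $W^m_p(D_1)$-norm of $v$ back to $D_j$ by inverse scaling produces a factor $2^{j(a'+d/p-a)p}$ in front of the local Kondratiev piece, which is harmless precisely when $a-\tfrac{d}{p}\ge a'$. Taking $\sup_j$ and using $\ell_p\hookrightarrow \ell_\infty$ together with the $\ell_p$-summation implicit in the Kondratiev norm yields the sufficiency of both conditions.

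For necessity of $a-\tfrac{d}{p}\ge a'$ I would reuse the scaling test function from Step 2 of the proof of Proposition \ref{lemma-embedding-sobolev}: pick a non-trivial $u\in C_0^\infty(\R^d)$ supported in $\{0<|x'|<1\}$ and compare the homogeneity scaling of the two Kondratiev norms under $u\mapsto u(\lambda\cdot)$ as $\lambda\to\infty$ (the $L_\infty$-based norm scales as $\lambda^{a'}$ while the $L_p$-based one scales as $\lambda^{a-d/p}$, cf.\ \eqref{ws-23}). For the smoothness condition, one tests against functions supported in a fixed bounded subset of $\R^d\setminus\R^\ell_*$, so that the weights become irrelevant and the embedding reduces to an unweighted Sobolev embedding $W^m_p\hookrightarrow W^{m'}_\infty$ on that fixed subdomain; its sharpness (including the \emph{strict} inequality $m-d/p>m'$ for $p>1$ versus the possibility of equality $m-d=m'$ for $p=1$) gives the claimed smoothness conditions.

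The main technical point to watch is the $p=1$ versus $p>1$ dichotomy: for $p>1$ one must allow no equality in $m-d/p\ge m'$, and this enters only through the unweighted Sobolev step. Everything else---the localization, the rescaling, and the scaling argument for necessity of the weight condition---is exactly parallel to the $q<\infty$ case and presents no new difficulty.
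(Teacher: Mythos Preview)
Your proposal is correct and follows exactly the approach the paper intends: the paper states Proposition \ref{lemma-embedding-sobolevinfty} without a separate proof, relying on the reader to transplant the argument of Proposition \ref{lemma-embedding-sobolev} with the unweighted embedding $W^m_p(D_1)\hookrightarrow W^{m'}_\infty(D_1)$ in place of \eqref{ws-21}, precisely as you do. The only minor caveat is that Proposition \ref{zerlegungdereins} is stated for $1\le p<\infty$, so your $\sup_j$-version of the localized $\calk^{m'}_{a',\infty}$-norm is not literally covered by it; but this $p=\infty$ analogue is immediate from the same derivative bounds $|\partial^\alpha\varphi_j|\lesssim 2^{j|\alpha|}$ and the finite overlap of the $D_j$, so no real gap arises.
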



\subsection{Continuous  embeddings for Kondratiev spaces on domains of generalized  polyhedral type}


\begin{satz}\label{sobolev}
Let $1 \le p \le q < \infty$, $m \in \N$,  and $a\in \R$.
Let the pair $(D,M)$ be of generalized polyhedral type.
Then 	$\calk^m_{a,p}(D,M)$ is embedded into $\calk^{m'}_{a',q}(D,M)$ if, and only if, 
\[
		m-\frac{d}{p}\geq m'-\frac{d}{q}\qquad\text{and}\qquad
		a-\frac{d}{p} \geq a'-\frac{d}{q}\,.
\]
\end{satz}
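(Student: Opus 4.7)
The plan is to reduce everything to the model case $\calk^m_{a,p}(\R^d,\R^\ell_*)$, for which Proposition \ref{lemma-embedding-sobolev} has already settled the analogous result.

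\textbf{Sufficiency.} Assuming $m-d/p\geq m'-d/q$ and $a-d/p\geq a'-d/q$, first apply Lemma \ref{deco} (or Lemma \ref{deco3} in the generalized setting, with Lemma \ref{deco2} used to further break up the polyhedral cones of Case IV into a smooth cone and finitely many specific nonsmooth cones) to bound $\|u|\calk^{m'}_{a',q}(D,M)\|$ by the sum of localized pieces on each chart $U_i$. I would then handle each type of piece separately. For $i\in\Lambda_1$ the weight $\rho$ is comparable to $1$, so the unweighted Sobolev embedding $W^m_p(U_i\cap D)\hookrightarrow W^{m'}_q(U_i\cap D)$ gives the required bound, since $U_i\cap D$ has the (weak) cone property. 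For $i\in\Lambda_2\cup\Lambda_3\cup\Lambda_4$, after pulling back by the diffeomorphism $\eta_i$ (Lemma \ref{diffeo}), I would apply Stein's extension operator from Proposition \ref{extension} to obtain a function in $\calk^m_{a,p}(\R^d,\R^\ell_*)$ with $\ell\in\{0,1,\dots,d-1\}$ chosen according to the case, apply Proposition \ref{lemma-embedding-sobolev}, and restrict again. Summing the estimates and applying Lemma \ref{deco}/\ref{deco3} once more (on the right-hand side) yields
\[
\|u|\calk^{m'}_{a',q}(D,M)\|\lesssim\|u|\calk^m_{a,p}(D,M)\|.
\]

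\textbf{Necessity.} I would mimic the scaling argument from Step 2 of the proof of Proposition \ref{lemma-embedding-sobolev}. Since $M$ is nontrivial and $(D,M)$ is of generalized polyhedral type, there exists a chart $U_i$ (in $\Lambda_2$, $\Lambda_3$ or $\Lambda_4$) in which the situation is diffeomorphic to one of the standard model pairs, so that the distance to $M$ transports to the distance to some $\R^\ell_*$. Pick a nontrivial test function $u_0\in C_0^\infty(\R^d\setminus\R^\ell_*)$ supported in a small set bounded away from $\R^\ell_*$, and consider its rescalings $u_0(\lambda\,\cdot)$ for large $\lambda$, transported back to $D$ via the diffeomorphism; Lemma \ref{diffeo} shows the Kondratiev norms on $D$ and in the model setting are equivalent for such localized test functions. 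The homogeneity identity \eqref{ws-23} then forces $a-d/p\geq a'-d/q$; necessity of $m-d/p\geq m'-d/q$ is the classical Sobolev restriction and can be obtained by the same scaling applied to a test function supported well inside $D$ and away from $M$.

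\textbf{Main obstacle.} The delicate point is the sufficiency in Case IV, the polyhedral cone $Q$, because Stein's extension from Proposition \ref{extension} is stated only for the three standard pairs $(K,\{0\})$, $(P,\Gamma)$ and $(I,M_\ell)$ whose singular sets match some $\R^\ell_*$. Here the decomposition of Lemma \ref{deco2}, together with the accompanying norm equivalence, is essential: it allows replacing a polyhedral cone by a smooth cone plus finitely many specific nonsmooth cones, each of which does fall under Proposition \ref{extension}. A related subtlety arises for generalized polyhedral domains such as the example $D_7$, where a covering as in Definition \ref{standard} is not available and one must work with Lemma \ref{deco3} instead; however, since the norm decomposition there has the same structure, the argument still goes through component-wise.
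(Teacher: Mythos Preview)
Your proposal is correct and follows essentially the same route as the paper: reduce Cases I--III to the model case via the Stein extension operator (Proposition~\ref{extension}) and Proposition~\ref{lemma-embedding-sobolev}, handle Case~IV through the decomposition of Lemma~\ref{deco2}, and pass to (generalized) polyhedral domains via Lemmas~\ref{deco} and~\ref{deco3}; for necessity, both you and the paper rely on the scaling argument from Step~2 of Proposition~\ref{lemma-embedding-sobolev}, applied to compactly supported test functions localized near the singular set. The only cosmetic difference is that the paper performs the scaling directly in the standard cone $K$ rather than transporting through a diffeomorphism, but this amounts to the same computation.
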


\begin{proof}
{\em Step 1.} Let  $(D,M)$  be as in Cases I-III.
The linear and continuous extension operator $\mathfrak{E}$ in
Proposition \ref{extension} allows to reduce sufficiency 
to Proposition \ref{lemma-embedding-sobolev}.
Concerning necessity we mention that our argument in Proposition \ref{lemma-embedding-sobolev} relied on a function $u\in C_0^\infty (\Rd)$
with $\supp u \subset \{x\in \Rd:~ 0<|x'|<1\}$.
For notational convenience we consider Case I with $D=K$ and $M=\{0\}$.
Let $u\in C_0^\infty(K)$. Again all functions $u(\lambda \, \cdot\, )$, $\lambda >1$,
belong to $C_0^\infty (K)$. Now, similar as in the proof of Theorem \ref{norms} we can reduce everything to Proposition \ref{lemma-embedding-sobolev}.
\\
{\em Step 2.}
Let  $D$  be a polyhedral cone as in Case IV.  
It is enough to combine  Lemma \ref{deco2} with Step 1.
\\
{\em Step 3.} Let $D$ be a domain of polyhedral type with singularity set $S$.
Then we make use of Lemma \ref{deco} to reduce the problem to an application of Steps 1 and 2.
\\
{\em Step 4.} In all remaining cases we apply Lemma \ref{deco3} with Steps 1 and 2. 
\end{proof}

Arguing as in case $q<\infty$ we can derive also the following for $q= \infty$.

\begin{satz}\label{sobolevinfty}
Let $a\in \R$ and  $m \in \N$.
Let the pair $(D,M)$ be of generalized polyhedral type.
\\
{\rm (i)} Let $1 < p  < \infty$. 
Then 	$\calk^m_{a,p}(D,M)$ is embedded into $\calk^{m'}_{a',\infty}(D,M)$ if, and only if, 
\[
m-\frac{d}{p} > m'\qquad\text{and} \qquad a-\frac{d}{p} \geq a'\,.
\]
{\rm (ii)} Let $ p = 1$. 
Then 	$\calk^m_{a,1}(D,M)$ is embedded into $\calk^{m'}_{a',\infty}(D,M)$ if, and only if, 
\[
m-{d} \ge  m'\qquad\text{and} \qquad a-d \geq a'\,.
\]
\end{satz}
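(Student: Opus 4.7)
The plan is to mirror the four-step reduction used in the proof of Theorem \ref{sobolev}. Via the Stein extension operator $\mathfrak{E}$ from Proposition \ref{extension}, Cases I--III reduce to the model embedding $\calk^m_{a,p}(\Rd,\R^\ell_*) \hookrightarrow \calk^{m'}_{a',\infty}(\Rd,\R^\ell_*)$; polyhedral cones (Case IV) are handled by Lemma \ref{deco2}, general domains of polyhedral type by Lemma \ref{deco}, and the remaining pairs of generalized polyhedral type by Lemma \ref{deco3}. All of the work therefore reduces to proving Proposition \ref{lemma-embedding-sobolevinfty} in the model setting.

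For sufficiency in the model case I follow the recipe of Subsection \ref{general}. Take the partition of unity $(\varphi_j)_{j=0}^\infty$ from Lemma \ref{zerlegungdereinsBeispiel1}, so that $\supp \varphi_j \subset D_j$ (see \eqref{eq-Dj}) and $\varphi_j(x) = \varphi_1(2^{j-1}x)$ for $j \in \N$. Rescaling via $v_j(y) := (\varphi_j u)(2^{-j+1} y)$ transfers each piece to the fixed reference region $D_1$. Using $\rho(x) \asymp 2^{-j}$ on $D_j$, the chain rule, and the classical Sobolev embedding $W^m_p(D_1) \hookrightarrow C^{m'}(\overline{D_1})$ (which holds precisely when $m - d/p > m'$ for $p>1$ or $m - d \geq m'$ for $p=1$, see \cite[Thm.~5.4]{Ada}), a short computation yields
\[
\sup_{|\alpha|\le m'}\sup_{x\in D_j}\bigl|\rho(x)^{|\alpha|-a'}\partial^\alpha(\varphi_j u)(x)\bigr|
\,\lesssim\, 2^{j(a'-a+d/p)}\,\|\varphi_j u\,|\calk^m_{a,p}(\Rd,\R^\ell_*)\|\,.
\]
The assumption $a - d/p \geq a'$ makes the $j$-dependent prefactor uniformly bounded, and the trivial estimate $\sup_j c_j \leq \bigl(\sum_j c_j^p\bigr)^{1/p}$ combined with \eqref{ws-160} closes the argument.

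Necessity follows exactly Step 2 of the proof of Proposition \ref{lemma-embedding-sobolev}. The necessity of $m - d/p > m'$ (respectively $m - d \geq m'$) is classical: on any fixed ball contained in $\{0 < |x'| < 1\}$ the weight $\rho$ is comparable to a positive constant, so the condition reduces to the sharpness of the unweighted Sobolev embedding into $C^{m'}$ \cite[5.4]{Ada}. For the necessity of $a - d/p \geq a'$, fix a nontrivial $u \in C_0^\infty(\Rd)$ with $\supp u \subset \{0<|x'|<1\}$; the calculation \eqref{ws-23} gives $\|u(\lambda\cdot)|\calk^m_{a,p}\|^p \asymp \lambda^{ap-d}$ for $\lambda > 1$, while an analogous computation for the supremum-based norm produces $\|u(\lambda\cdot)|\calk^{m'}_{a',\infty}\| \asymp \lambda^{a'}$, and comparing as $\lambda \to \infty$ forces $a' \leq a - d/p$. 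I expect the main technical subtlety to be the $p = 1$ endpoint, where the Sobolev condition $m - d \geq m'$ is non-strict and Morrey's inequality does not apply directly; there one must appeal to the borderline $W^m_1 \hookrightarrow C^{m'}$ embedding proved by an additional integration step. Apart from this endpoint, no new ideas beyond those used in the proof of Theorem \ref{sobolev} are required.
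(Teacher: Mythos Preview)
Your proposal is correct and follows essentially the same approach as the paper, which simply states ``Arguing as in case $q<\infty$ we can derive also the following for $q=\infty$'' and gives no further details. Your four-step reduction to the model case via Proposition~\ref{extension} and Lemmas~\ref{deco2}, \ref{deco}, \ref{deco3}, together with the localization/rescaling argument and the homogeneity computation for necessity, is exactly what the paper intends; your identification of the $p=1$ endpoint as the only place requiring extra care is also accurate.
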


\begin{rem}
 \rm Embeddings of Kondratiev spaces have been proved also in 
Maz'ya and Rossmann \cite{MR}. 
We refer to Lemma 1.2.2 and Lemma  1.2.3 (smooth cones),  Lemma 2.1.1 (dihedron), Lemma 3.1.3 and Lemma 3.1.4 (cones with edges)
 and Lemma 4.1.2 (domains of polyhedral type).
Only sufficiency is discussed there. Except for smooth cones  the case of equality of $m-d/p$ and $m'-d/q$
is always excluded. 
\end{rem}


\section{Compact  embeddings}


Having dealt with continuous embeddings within the scale of Kondratiev spaces so far, 
we now investigate when these embeddings are compact.
Roughly speaking, it turns out that whenever we deal with strict inequalities in 
Theorems \ref{sobolev} and \ref{sobolevinfty} we obtain compact embeddings.
Recall that   in a pair $(D,M)$ of generalized  polyhedral type $D$ is a bounded domain.

\begin{satz}\label{sobolevcompact}
Let $1 \le p \le q \le \infty$, $m \in \N$, and $a\in \R$.
Let $(D,M)$ be either $(K,M)$ (Case I) or $(P,\Gamma)$ (Case II) or $(I,M_\ell)$ (Case III) or $(Q,M)$ (Case IV)  or a domain of polyhedral type
with $S$ being the  singularity set of $D$.
Then 	$\calk^m_{a,p}(D,M)$ is compactly  embedded into $\calk^{m'}_{a',q}(D,M)$ if, and only if, 
\[
		m-\frac{d}{p} > m'-\frac{d}{q}\qquad\text{and}\qquad
		a-\frac{d}{p} > a'-\frac{d}{q}\,.
\]
\end{satz}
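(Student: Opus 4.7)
The plan is to combine the localization principle of Proposition \ref{zerlegungdereins} with the classical Rellich--Kondrachov theorem on the fixed reference shell $D_1$ for sufficiency, and to exploit the homogeneity computation already used in the proof of Proposition \ref{lemma-embedding-sobolev} for necessity. As in Theorem \ref{sobolev}, the argument can be reduced to the model situation $(\Rd \setminus \R^\ell_*, \R^\ell_*)$: Proposition \ref{extension} together with Lemma \ref{diffeo} handles Cases I--III; Lemma \ref{deco2} reduces the polyhedral cone of Case IV to Cases I and II; and Lemma \ref{deco} glues the local pieces together for a general domain of polyhedral type.

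For sufficiency, fix the decomposition of unity $(\varphi_j)_{j\ge j_1}$ from Proposition \ref{zerlegungdereins} together with the annuli $D_j$ from \eqref{eq-Dj}, and use \eqref{ws-160} to write both source and target norms as $\ell_p$- and $\ell_q$-sums of the localized norms $\|\varphi_j u|\calk^\cdot_{\cdot,\cdot}(D,M)\|$. The transformation $x = 2^{-j+1}y$ performed in the proof of Proposition \ref{lemma-embedding-sobolev} converts each block into an unweighted Sobolev norm on the fixed shell $D_1$, up to a scalar factor of size $2^{(j-1)q(a' - a + d/p - d/q)}$. Under the strict hypothesis $a - \frac{d}{p} > a' - \frac{d}{q}$ this factor decays geometrically in $j$, so for any $\varepsilon>0$ there exists $J$ such that the tail $\sum_{j>J}\|\varphi_j u|\calk^{m'}_{a',q}(D,M)\|^q$ stays below $\varepsilon$ uniformly over the unit ball of $\calk^m_{a,p}(D,M)$. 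On the finite remaining block $\sum_{j\le J}$ compactness is inherited from the classical Rellich--Kondrachov embedding $W^m_p(D_1) \hookrightarrow W^{m'}_q(D_1)$, which is genuinely compact thanks to the strict inequality $m - \frac{d}{p} > m' - \frac{d}{q}$. A standard diagonal extraction then produces from any bounded sequence a subsequence that is Cauchy in $\calk^{m'}_{a',q}(D,M)$.

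For necessity, any compact embedding is continuous, so Theorem \ref{sobolev} (respectively Theorem \ref{sobolevinfty} for $q=\infty$) already yields the non-strict inequalities. To rule out equality in $m - \frac{d}{p} = m' - \frac{d}{q}$, translate a fixed bump supported in a small ball $B \subset D \setminus M$ through a region where $\rho$ is bounded below; the resulting sequence is bounded in the source and has disjoint supports, reproducing the unweighted Rellich--Kondrachov counterexample. To rule out equality in $a - \frac{d}{p} = a' - \frac{d}{q}$, pick a nontrivial $u \in C_0^\infty$ with $\supp u \subset D \cap \{0 < |x'| < 1\}$ (using a coordinate patch near $M$) and consider the dilations $u_k := u(\lambda_k\,\cdot\,)$ along a lacunary sequence $\lambda_k \to \infty$. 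The homogeneity calculation \eqref{ws-23} applies equally to the target norm and gives
\[
\|u_k|\calk^m_{a,p}(D,M)\| \asymp \lambda_k^{a - d/p}\,,\qquad \|u_k|\calk^{m'}_{a',q}(D,M)\| \asymp \lambda_k^{a' - d/q}\,,
\]
so after normalization in the source the sequence stays bounded in $\calk^m_{a,p}(D,M)$, has target norms bounded away from zero, and its supports concentrate toward $M$ with pairwise essentially disjoint supports, precluding any convergent subsequence.

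The main obstacle is the uniformity of the tail estimate in the source unit ball when $q=\infty$: here the $\ell_q$-sum must be replaced by a supremum over $j$, and the Sobolev step must instead invoke the genuinely compact embedding $W^m_p(D_1) \hookrightarrow L_\infty(D_1)$ (together with its derivatives up to order $m'$), available thanks to $m - \frac{d}{p} > m'$. Once this adjustment is made and the geometric decay factor $2^{(j-1)(a' - a + d/p)}$ is handled by the same truncation argument, the scheme covers the whole range $1 \le p \le q \le \infty$ uniformly.
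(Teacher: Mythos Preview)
Your sufficiency argument contains a genuine gap. You propose to reduce to the model pair $(\Rd\setminus\R^\ell_*,\R^\ell_*)$ via the extension operator of Proposition~\ref{extension}, exactly as in Theorem~\ref{sobolev}, and then to invoke the classical Rellich--Kondrachov theorem on the fixed reference shell $D_1$. But this reduction, while harmless for \emph{continuous} embeddings, destroys compactness: the model domain is unbounded, and for $\ell\ge 1$ the shell $D_1=\{x\in\Rd:\tfrac14<|x'|<1\}$ is an infinite slab in the $x''$-directions, so $W^m_p(D_1)\hookrightarrow W^{m'}_q(D_1)$ is \emph{not} compact (translate a bump in the $x''$-direction). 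Even for $\ell=0$ the block $j=0$ lives on the unbounded set $\{|x|>\tfrac12\}$. Hence no diagonal extraction on the finite block $\sum_{j\le J}$ is available, and the argument breaks down precisely in Cases~II--IV and for the tail of a general polyhedral domain.

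The paper avoids this entirely by working directly on the bounded domain $D$ and using a much simpler splitting: set $D_\delta=\{x\in D:\rho(x)<\delta\}$, use the strict inequality $a-\tfrac dp>a'-\tfrac dq$ together with the continuous embedding of Theorems~\ref{sobolev}/\ref{sobolevinfty} to make the $\calk^{m'}_{a',q}$-norm on $D_\delta$ uniformly small (a factor $\delta^\sigma$ with $\sigma=a-\tfrac dp-a'+\tfrac dq>0$), and apply the ordinary Rellich--Kondrachov theorem on the bounded set $D\setminus\overline{D}_\delta$, where the weight is $\asymp 1$ and the cone condition holds. No rescaling, no extension, no partition of unity is needed. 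Your tail estimate via geometric decay is morally the same as the paper's $\delta^\sigma$-bound, but the compact core has to be handled on $D$ itself, not on the model. Your necessity argument is essentially the paper's: the translated bumps away from $M$ and the dilated/translated bumps concentrating toward $M$ are exactly the counterexamples used in Substeps~2.1 and~2.2.
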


\begin{proof}
{\em Step 1.} Sufficiency. Here we will  follow Maz'ya, Rossmann \cite[Lemma 4.1.4]{MR}.
\\
We fix $\varepsilon >0$.
Moreover, for $\delta >0$ we put 
\[
 D_\delta := \{x \in D: \quad \rho (x) <  \delta\}\,.
\]
If $\delta $ is small enough, then, as $D$ itself, $D \setminus \overline{D}_\delta $ has the cone property.
This implies the compactness of the embedding $W^m_{p}(D \setminus \overline{D}_\delta) \hookrightarrow \hookrightarrow W^{m'}_{q}(D \setminus \overline{D}_\delta)$
and therefore (with a slight abuse of notation)
$\calk^m_{a,p}(D \setminus \overline{D}_\delta,M) \hookrightarrow \hookrightarrow \calk^{m'}_{a',q}(D \setminus \overline{D}_\delta,M)$.

Let $U$ denote the unit ball in  $\calk^m_{a,p}(D,M)$. Then this compact embedding, together with
\[
	\sup_{u\in U}\|u|\calk^{m'}_{a',q}(D\setminus\overline{D}_\delta,M)\|
		\leq\sup_{u\in U}\|u|\calk^{m'}_{a',q}(D,M)\|=C_1<\infty
\]
which in turn is a consequence of the continuity of the embedding
$\calk^m_{a,p}(D,M) \hookrightarrow \calk^{m'}_{a',q}(D,M)$ (see Theorems  \ref{sobolev} and  \ref{sobolevinfty}), implies the existence of a
finite $\varepsilon$-net $u_1, \ldots, u_N \in U$ such that for all  
$u \in U$ we have
\[
 \min_{i=1, \ldots \, , N} \, \|\, u-u_i \, |\calk^{m'}_{a',q}(D \setminus \overline{D}_\delta,M)\|< \varepsilon\, .
\]
Next we define  $\sigma := a-\frac dp + \frac dq -a'$. By assumption $\sigma >0$.
If $u \in U $, applying Theorems  \ref{sobolev} and \ref{sobolevinfty},   we conclude
 \beqq
 \|u|\calk^{m'}_{a',q}(D_\delta ,M)\| & = &  \Big(\sum_{|\alpha|\leq m'} \int_{D_\delta}
|\rho(x)^{|\alpha|- (a - \frac dp + \frac dq -\sigma)} \partial^\alpha u(x)|^q\,dx\Big)^{1/q}
\\
& \le &
 \delta ^\sigma\,    \Big(\sum_{|\alpha|\leq m'} \int_{D_\delta}
|\rho(x)^{|\alpha|- (a - \frac dp + \frac dq)} \partial^\alpha u(x)|^q\,dx\Big)^{1/q}
\\
& \le  &
 \delta ^\sigma\,    \|  \, u \, | \calk^{m'}_{a-\frac dp + \frac dq,q}(D,M)\|
\\
& \le & \delta ^\sigma\,   C_1\,  \|  \, u \, | \calk^{m}_{a,p}(D,M)\| 
\\
& \le & \delta^\sigma \, C_1\, .
\eeqq
Choosing $\delta$ so small such that $\delta^\sigma \, C_1< \varepsilon$ we get

\[
 \min_{i=1, \ldots \, , N} \, \|\, u-u_i \, |\calk^{m'}_{a',q}(D,M)\| < 3\, \varepsilon\, .
\] 
Hence, the embedding is compact.
\\
{\em Step 2.} We deal with necessity.\\
{\em Substep 2.1.} The necessity of $m-\frac{d}{p} > m'-\frac{d}{q}$ 
 follows from the necessity of this condition for the compactness of the embedding 
$W^m_{p}(\Omega) \hookrightarrow \hookrightarrow W^{m'}_{q}(\Omega)$, where $\Omega$ is a domain in ${\R^d}$ satisfying a cone condition.
It is enough to choose $\Omega$ as a ball contained in $D$ such that $\dist (\Omega,M)\in [A,B]$ for $0 < A < B < \infty$.
\\
{\em Substep 2.2.} Necessity of $a-\frac{d}{p} > a'-\frac{d}{q}$. 
Let $u \in C_0^\infty (\Rd)$ be a nontrivial function such that 
$\supp u \subset B_1(0)$. Next we select a sequence $(x^j)_j \subset D$ such that 
\[
B_{2^{-(j+4)}}(x^j) \, {\subset}\,  \Big\{x \in D: \: 2^{-j} <  \rho(x)< 2^{-j+1} \Big\}\, , \qquad j \ge j_0 (D).
\]
For 
$D$ being  a domain of generalized polyhedral type  it is clear that such a sequence exists if $j_0 (D)$ is chosen sufficiently large.
We put
\[
 u_j (x):= u(2^j (x-x^j)), \quad  x \in \Rd, \quad j \ge j_0 (D).
\]
It follows 
\beqq
\|\, u_j\, |\calk^m_{a,p}(D,M)\|^p & \asymp &  \sum_{|\alpha|\leq m}   2^{-j(|\alpha|-a)p} \int_D
|\partial^\alpha u_j(x)|^p\, dx
\\
&\asymp & \sum_{|\alpha|\leq m}   2^{-j(|\alpha|-a)p} 2^{j|\alpha|p} \, \int_{B_1(0)} |\partial^\alpha u(y)|^p\, 2^{-jd}\, dy
\\
&\asymp & 2^{j(ap-d)}  \,  \|\, u\, |W^m_p (B_1(0))\| \, .
\eeqq
Hence, we obtain  
\[
\|\, 2^{-j(a- \frac dp)}\,  u_j\, |\calk^m_{a,p}(D,M)\|\asymp 1\, , \qquad j \ge j_0 (D)\, . 
\]
Let $a'':= a-\frac{d}{p} + \frac{d}{q}$.
Then
\[
 \|\, 2^{-j(a''-\frac dq)}\,  u_j\, |\calk^m_{a'',q}(D,M)\|  = \|\, 2^{-j(a-\frac dp)}\,  u_j\, |\calk^m_{a'',q}(D,M)\|\geq c>0\, ,  
\qquad j \ge j_0 (D)\, .
\]
Observe that 
\[
 \supp u_j \, \cap\, \supp u_{j+2} = \emptyset\, , \qquad j \ge j_0 (D).
\]
Consequently $\big({2^{-2j(a-\frac dp)}}u_{2j}\big)_j$ is a bounded sequence in  $\calk^m_{a,p}(D,M)$ which does not have a convergent subsequence in $\calk^m_{a'',q}(D,M)$.
\end{proof}

\begin{rem}\rm
For Sobolev spaces  this result, usually called Rellich-Kondrachov theorem, has been known for a long time, 
we refer to Adams \cite[Thm.~6.2]{Ada}.
The sufficiency part of Theorem \ref{sobolevcompact}  is essentially contained in Maz'ya, Rossmann \cite[Lemma 4.1.4]{MR}.
\end{rem}


\section{Pointwise multiplication in Kondratiev spaces}
\label{pm}



\subsection{Pointwise multiplication in the model case }
\label{pm1}


First we deal with our model case $\calk^{m}_{a,p} (\R^d,\R^\ell_*)$. 
As before the main idea consists in tracing everything back to the standard Sobolev case.


\subsubsection{Algebras with respect to pointwise multiplication}


Recall that $W^m_p (\Rd)$, $m \in \N$, $1\le p < \infty$, is an algebra with respect to pointwise multiplication
if, and only if, either $1<p<\infty$ and $m>d/p$ or $p=1$ and $m \ge d$, cf.  \cite[Thm.~5.23]{Ada} and also \cite[Sect.~6.1]{MS85} 
for the limiting case. In particular, 
\be\label{sobo1}
\|u \, \cdot \,v|W^m_p(\Rd)\|\leq c \|u|W^m_p(\Rd)\|\|v|W^m_p(\Rd)\|, 
\ee
for all $u,v\in W^m_p(\Rd)$ and {some} $c>0$. 
Observe, that these conditions are equivalent with the $L_\infty$-embedding of the Sobolev space.
\\
As a first step we now consider the following more general estimate of a product.
The strategy of the proof will be essential for the following results.

\begin{satz}\label{mult1}
Let $1\le p < \infty$, $m \in \N$, $0 \le \ell < d$, and $a \in \R$. In case that $W^m_p (\Rd)$ is an algebra with respect to pointwise 
multiplication  there exists a constant $c$ s.t.
	\begin{equation*}
		\|\, u\, \cdot \, v\,  |\calk^{m}_{2a-\frac{d}{p},p} (\R^d,\R^\ell_*)
			\|\le c \, \|\, u\, |\calk^{m}_{a,p} (\R^d,\R^\ell_*)\| \, \|\, v\,  |\calk^{m}_{a,p} (\R^d,\R^\ell_*)\|
	\end{equation*}
	holds for all $u, v \in   \calk^{m}_{a,p} (\R^d,\R^\ell_*)$. 
\end{satz}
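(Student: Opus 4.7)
The plan is to follow precisely the four-step strategy outlined in Subsection \ref{general}. The choice of the target weight $2a - \frac{d}{p}$ is not arbitrary; it is exactly the value that makes the scaling exponents cancel, as I shall verify below.

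First I would localize by means of Proposition \ref{zerlegungdereins} together with Lemma \ref{zerlegungdereinsBeispiel1}: for $\Omega = \Rd \setminus \R^\ell_*$ with $M = \R^\ell_*$ there exists a partition of unity $(\varphi_j)_{j \ge 0}$ with $\supp \varphi_j \subset D_j$ (where $D_j = \{2^{-j-1} < \rho < 2^{-j+1}\}$), $\varphi_j(x) = \varphi_1(2^{j-1}x)$ for $j \ge 1$, and finite overlap. By the equivalent norm \eqref{ws-160}, it suffices to estimate $\sum_{j \ge 0} \|\varphi_j\, uv \,|\, \calk^m_{2a-d/p,p}\|^p$ from above by $\|u\|^p\|v\|^p$. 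On $\supp \varphi_j$ the weight $\rho$ is comparable to $2^{-j}$, so one may pull the factor $2^{-j(|\alpha|-(2a-d/p))}$ out of the integral.

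Next I would use the homogeneity $U_j(y) := u(2^{-j+1}y)$, $V_j := v(2^{-j+1}\cdot)$, combined with $\varphi_j(2^{-j+1}\cdot) = \varphi_1$, to transfer each localized norm to an \emph{unweighted} Sobolev norm on the fixed reference domain $D_1$. A routine change of variables (as in the proof of Proposition \ref{lemma-embedding-sobolev}) yields
\begin{equation*}
\|\varphi_j u\,|\,\calk^m_{a,p}(\Rd,\R^\ell_*)\|^p \sim 2^{(j-1)(ap-d)}\,\|\varphi_1 U_j\,|\,W^m_p(D_1)\|^p,
\end{equation*}
and analogously for $v$, while for the product
\begin{equation*}
\|\varphi_j uv\,|\,\calk^m_{2a-d/p,p}(\Rd,\R^\ell_*)\|^p \sim 2^{(j-1)((2a-d/p)p-d)}\,\|\varphi_1 U_j V_j\,|\,W^m_p(D_1)\|^p.
\end{equation*}
Since $(2a-d/p)p - d = 2(ap-d)$, the exponent on the right-hand side is exactly twice the exponent appearing for each factor. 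This is the key algebraic identity driving the proof.

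The third step is to exploit the assumed algebra property of $W^m_p(\Rd)$. Choose an auxiliary cutoff $\tilde\varphi_1 \in C_0^\infty$ with $\tilde\varphi_1 \equiv 1$ on $\supp \varphi_1$ (so that its rescaled version covers the slightly enlarged set where $V_j$ matters near $\supp \varphi_1$); then write $\varphi_1 U_j V_j = (\varphi_1 U_j)\cdot(\tilde\varphi_1 V_j)$, extend both factors by zero to $\Rd$, and apply \eqref{sobo1}:
\begin{equation*}
\|\varphi_1 U_j V_j\,|\,W^m_p(D_1)\|^p \le c\,\|\varphi_1 U_j\,|\,W^m_p(\Rd)\|^p\,\|\tilde\varphi_1 V_j\,|\,W^m_p(\Rd)\|^p.
\end{equation*}
Undoing the rescaling on the right-hand side produces $\|\varphi_j u\|_{\calk^m_{a,p}}^p\cdot\|\tilde\varphi_j v\|_{\calk^m_{a,p}}^p$, where $\tilde\varphi_j$ is obtained from $\tilde\varphi_1$ by the inverse dilation (equivalently, $\tilde\varphi_j$ may be replaced by $\varphi_{j-1}+\varphi_j+\varphi_{j+1}$ up to a constant). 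The scaling factors $2^{(j-1)(ap-d)}$ coming from each of the two $\calk^m_{a,p}$-norms combine to exactly cancel the factor $2^{2(j-1)(ap-d)}$ from the left-hand side --- this is precisely why the exponent $2a - d/p$ is the correct target.

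Finally I would sum over $j$ and use the finite overlap of the $\tilde\varphi_j$ together with \eqref{ws-160}:
\begin{equation*}
\sum_j \|\varphi_j u\|_{\calk^m_{a,p}}^p\,\|\tilde\varphi_j v\|_{\calk^m_{a,p}}^p
 \le \Bigl(\sup_j \|\tilde\varphi_j v\|_{\calk^m_{a,p}}^p\Bigr)\sum_j \|\varphi_j u\|_{\calk^m_{a,p}}^p
 \lesssim \|v\|_{\calk^m_{a,p}}^p\,\|u\|_{\calk^m_{a,p}}^p.
\end{equation*}
The main obstacle is bookkeeping rather than conceptual: one must verify meticulously that all powers of $2$ balance and that the cutoff $\tilde\varphi_j$ used for $v$ can be absorbed by finite overlap. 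Once the weight exponent $2a - d/p$ is recognized as forced by the homogeneity of the $W^m_p$ algebra estimate, the four-step scheme from Subsection \ref{general} essentially writes itself.
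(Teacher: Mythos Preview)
Your argument is correct and follows essentially the same route as the paper: localize to the dyadic annuli $D_j$, rescale to the fixed reference set $D_1$, apply the Sobolev algebra inequality there, and rescale back, with the exponent $2a-d/p$ emerging from the identity $(2a-d/p)p-d=2(ap-d)$ exactly as you observe. The only organizational difference is that the paper works directly with the overlapping cover $(D_j)_j$ and a separate near/far support splitting (via cutoffs $\eta,\tau$) rather than pushing the partition $(\varphi_j)_j$ through the product; your use of $\varphi_j$ on $uv$ together with the auxiliary fattened cutoff $\tilde\varphi_j$ for the second factor is a clean equivalent, and the uniform multiplier bound $\sup_j\|\tilde\varphi_j v|\calk^m_{a,p}\|\lesssim\|v|\calk^m_{a,p}\|$ you need is indeed immediate from $\rho^{|\alpha|}|\partial^\alpha\tilde\varphi_j|\lesssim 1$.
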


\begin{proof}
{\em Step 1.} Some preliminary estimates. Recall the definition of the sets $D_j$ in \eqref{eq-Dj} and the weight function $\rho$ from \eqref{eq-weight-model}.
Fix $j\geq 1$. We start with
\begin{align*}
\int_{D_j} | \partial^\beta w (x)|^p\, dx
		&\lesssim 2^{-dj}\, \int_{D_1} |\,\partial^{\beta} w (2^{-j+1}y)\,|^p\, dy\\
			& \lesssim 2^{j|\beta|p}\,  2^{-dj}\, \int_{D_1}
				\bigl| \partial^{\beta}\bigl(w (2^{-j+1} \, \cdot \,)\bigr) (y)\bigr|^p\, dy\,,
	\end{align*}
where we used the transformation of coordinates $y:= 2^{j-1}x$. Summation over $\beta$ then implies
	\begin{equation}\label{ws-060}
		\sum_{|\beta|\leq m} 2^{-j|\beta|p} \int_{D_j} | \partial^\beta w (x)|^p\, dx
			\lesssim    2^{-dj}\, \| \,w (2^{-j+1} \, \cdot \,)\, |W^m_p (D_1)\|^p.
	\end{equation}
We further note
\begin{eqnarray}
\label{ws-070}
\| \,w (2^{-j+1} \,\cdot\, )\, |W^m_p (D_1)\|^p
&= & 
\sum_{|\beta| \leq m} 2^{-(j-1)|\beta|p} \, \int_{D_1} |\partial^\beta w (2^{-j+1}y)  |^p\, dy
\nonumber
\\
&=& \sum_{|\beta| \leq m} 2^{-(j-1)|\beta|p} \, \int_{D_j} |\partial^\beta w (x)  |^p\, 2^{d(j-1)} \, dx\,.
\end{eqnarray}
{\em Step 2.} Let $u, v\in \calk^{m}_{a,p} (\Rd,\R^\ell_*)$ such that 
\[
\supp u,\, \supp v \subset \{x=(x',x'')\in \Rd:~~|x'| \le {3/4}\}\, .
\]
Since $W^m_p (\Rd)$ is an algebra,  \eqref{ws-060} applied to $w=u\, \cdot \,v$,  leads to
\begin{align}\label{w-5}
\|\, u\, \cdot \,v &  |\calk^{m}_{a,p} (\R^d,\R^\ell_*)\|^p
\nonumber
\\
& \lesssim   \sum_{j=1}^\infty \sum_{|\beta| \le m} 2^{-j(|\beta|-a)p}
\int_{D_j} | \,  \partial^\beta (u\, \cdot \,v) (x)|^p\, dx 
\nonumber
\\
&\lesssim  \sum_{j=1}^\infty 2^{jap} \, 2^{-dj}\,
\| \,u (2^{-j+1} \, \cdot \, ) v (2^{-j+1} \, \cdot \, )\, |W^m_p (D_1)\|^p
\nonumber
\\
&\lesssim  \sum_{j=1}^\infty 2^{jap} \, 2^{-dj}\,
\| \,u (2^{-j+1} \, \cdot \, )\, |W^m_p (D_1)\|^p \, 
	\|\, v (2^{-j+1} \, \cdot \, )\, |W^m_p (D_1)\|^p\,.
\end{align}
In view of \eqref{ws-070} we finally find
\begin{align*}
		\|\, u\, \cdot \, v  |\calk^{m}_{a,p} (\R^d,\R^\ell_*)\|^p
			& \lesssim \sum_{j=1}^\infty 2^{jap} \, 2^{-dj} \biggl(\sum_{|\beta| \le m} 
				2^{-j|\beta|p} \, 2^{dj} \, \int_{D_j} | \, \partial^\beta u (x)  |^p dx\biggr)
			\\
&\qquad\times \quad \biggl(\sum_{|\beta| \le m} 2^{-j|\beta|p} \, 2^{dj} \, 
\int_{D_j} | \, \partial^\beta v (x)  |^p\,  dx\biggr)
\\
& \lesssim \sum_{j=1}^\infty \biggl(\sum_{|\beta| \le m} 
				2^{-j|\beta|p} \, 2^{j(\frac a2 + \frac{d}{2p})p} \, 
				\int_{D_j}| \, \partial^\beta u (x)  |^p\, dx\biggr)
			\\
			& \qquad\times \quad \sup_{j\ge 1} \biggl(\sum_{|\beta| \le m} 
				2^{-j|\beta|p} \, 2^{j(\frac a2 + \frac{d}{2p})p} \, 
				\int_{D_j} |\, \partial^\beta v (x)  |^p\,  dx\biggr)
			\\
			& \lesssim  \|\, u\, |\calk^{m}_{\frac a2 + \frac{d}{2p},p} (\R^d,\R^\ell_*)\|^p\ 
				\|\,  v\,  |\calk^{m}_{\frac a2 + \frac{d}{2p},p} (\R^d,\R^\ell_*)\|^p
	\end{align*}
{\em Step 3.} 
Let  $u, v\in \calk^{m}_{a,p} (\Rd,\R^\ell_*)$ such that 
\[
\supp u,\, \supp v \subset \{x=(x',x'')\in \Rd:~~|x'| \ge 1/4\}\, .
\]
In this situation the weight does not play any role and we may apply the result for Sobolev spaces directly.
\\
{\em Step 4.} Let  $u, v\in \calk^{m}_{a,p} (\Rd,\R^\ell_*)$. There  exists a smooth function $\eta \in C^m (\Rd)$ with the following properties:
$\eta (x) = 1$ if $|x'|\le 1/2$ and $\supp \eta \subset \{x\in \Rd:~~|x'|\le 3/4\}$.
Let $\tau \in C^m (\Rd)$ be a function such that $\tau = 1$ on $\supp (1-\eta^2)$ and $\supp \tau \subset \{x \in \Rd:~~|x'|\ge 1/4\}$.
Making use of the basic properties of the Kondratiev spaces listed in Subsection \ref{sub1} and the results of Steps 2 and 3, we obtain
\begin{align*}
\|\, u\, \cdot &\, v\,   |\calk^{m}_{a,p} (\R^d,\R^\ell_*)\|^p \\
&= 
\|\, u\, \cdot \, v \, \cdot \, \eta^2 + u \, \cdot \, v \, \cdot \, (1-\eta^2)\,  |\calk^{m}_{a,p} (\R^d,\R^\ell_*)\|^p 
\\
& \lesssim   \|\, (u \, \cdot \, \eta) \, \cdot \, (v \, \cdot \,\eta) \, |\calk^{m}_{a,p} (\R^d,\R^\ell_*)\|^p 
+ \|\, (u \, \cdot \,(1-\eta^2)) \, \cdot \, (v \, \cdot \, \tau) \, |\calk^{m}_{a,p} (\R^d,\R^\ell_*)\|^p
\\
& \lesssim   \|\, u \, \cdot \,\eta  \, |\calk^{m}_{{\frac a2+\frac{d}{2p}},p} (\R^d,\R^\ell_*)\|^p 
\| \, v\, \cdot \, \eta \, |\calk^{m}_{{\frac a2+\frac{d}{2p}},p} (\R^d,\R^\ell_*)\|^p
\\
& \qquad \quad 
+ \|\, u \, \cdot \,(1-\eta^2) \,  |\calk^{m}_{{\frac a2+\frac{d}{2p}},p} (\R^d,\R^\ell_*)\|^p 
\|\,  v\, \cdot \, \tau \, |\calk^{m}_{{\frac a2+\frac{d}{2p}},p} (\R^d,\R^\ell_*)\|^p
\\
& \lesssim   \|\, u \,  |\calk^{m}_{{\frac a2+\frac{d}{2p}},p} (\R^d,\R^\ell_*)\|^p 
\| \, v\,  |\calk^{m}_{{\frac a2+\frac{d}{2p}},p} (\R^d,\R^\ell_*)\|^p\, .
\end{align*}
The proof is complete.
\end{proof}

\begin{cor}\label{mult11}
Let $1\le p < \infty$, $m \in \N$, $a \in \R$, and $0 \le \ell < d$. 
\\
{\rm (i)} Let $a\ge\frac{d}{p}$ and  either $1<p<\infty$ and  $m>d/p$ or $p=1$ and $m\ge d$. Then
the Kondratiev space $\calk^{m}_{a,p} (\Rd,\R^\ell_*)$ is an algebra with respect to pointwise multiplication, i.e., there exists a constant $c$ such that
\[
\|u\, \cdot \,v|\calk^{m}_{a,p} (\Rd,\R^\ell_*)\|\leq c \|u|\calk^{m}_{a,p} (\Rd,\R^\ell_*)\|\|v|\calk^{m}_{a,p} (\Rd,\R^\ell_*)\|
\]
holds for all $u,v\in \calk^{m}_{a,p} (\Rd,\R^\ell_*)$.
\\
{\rm (ii)} Let $\ell =0$. Then the Kondratiev space $\calk^{m}_{a,p} (\Rd,\R^0_*)$ is an algebra with respect to pointwise multiplication
if, and only if, $a\ge\frac{d}{p}$ and  either $1<p<\infty$ and  $m>d/p$ or $p=1$ and $m\ge d$. 
\end{cor}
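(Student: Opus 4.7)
My plan is to deduce (i) directly from Theorem \ref{mult1} and the monotonicity of the Kondratiev scale in the weight parameter, and to prove the necessity in (ii) by isolating each of the two conditions with an appropriately chosen test function. For (i), note that $a \geq d/p$ rewrites as $2a - d/p \geq a$, so the monotonicity recorded in \eqref{kondr-emb} yields the continuous embedding $\calk^m_{2a-d/p,p}(\Rd, \R^\ell_*) \hookrightarrow \calk^m_{a,p}(\Rd, \R^\ell_*)$. Under the Sobolev algebra hypothesis on $(m,p)$ Theorem \ref{mult1} is applicable, and chaining the two estimates produces
\[
\|u\cdot v|\calk^m_{a,p}(\Rd,\R^\ell_*)\|
\lesssim \|u\cdot v|\calk^m_{2a-d/p,p}(\Rd,\R^\ell_*)\|
\lesssim \|u|\calk^m_{a,p}(\Rd,\R^\ell_*)\|\,\|v|\calk^m_{a,p}(\Rd,\R^\ell_*)\|,
\]
which is the desired algebra estimate.

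For the necessity part of (ii), where $M = \{0\}$, I would treat the two conditions separately. To force the Sobolev algebra condition on $(m,p)$, fix a compact set $K \subset \Rd \setminus \{0\}$ with nonempty interior. On $K$ the weight $\rho(x) = \min(1,|x|)$ is bounded above and below by positive constants, so for $f$ supported in $K$ the Kondratiev norm $\|f|\calk^m_{a,p}(\Rd,\{0\})\|$ is equivalent to the standard Sobolev norm $\|f|W^m_p(\Rd)\|$. The assumed algebra property on $\calk^m_{a,p}(\Rd,\{0\})$ thus transfers to an algebra estimate for $W^m_p$-functions with support in $K$; a standard translation and cut-off argument then extends this to all of $W^m_p(\Rd)$, which is known to be an algebra precisely when $m > d/p$ (for $1<p<\infty$) or $m \geq d$ (for $p=1$). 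To force $a \geq d/p$, I would invoke the homogeneity identity \eqref{ws-23}: pick a nontrivial $u \in C_0^\infty(\Rd)$ with $\supp u \subset \{x\in\Rd : 0<|x|<1\}$, so that both $u$ and $u^2$ satisfy $\|f(\lambda\,\cdot\,)|\calk^m_{a,p}(\Rd,\{0\})\|^p \asymp \lambda^{ap-d}\|f|\calk^m_{a,p}(\Rd,\{0\})\|^p$ for every $\lambda > 1$. Using $(u(\lambda\,\cdot\,))^2 = u^2(\lambda\,\cdot\,)$ in the algebra inequality and dividing by $\lambda^{a-d/p}$ yields
\[
\|u^2|\calk^m_{a,p}(\Rd,\{0\})\| \lesssim \lambda^{a-d/p}\,\|u|\calk^m_{a,p}(\Rd,\{0\})\|^2
\qquad\text{for all } \lambda > 1.
\]
If $a < d/p$, the right-hand side tends to $0$ as $\lambda \to \infty$, forcing the nontrivial quantity $\|u^2|\calk^m_{a,p}(\Rd,\{0\})\|$ to vanish, a contradiction.

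The technical heart of the proof is already contained in Theorem \ref{mult1}; here only the monotonicity \eqref{kondr-emb}, the scaling identity \eqref{ws-23}, and the classical characterization of when $W^m_p(\Rd)$ is an algebra are needed. The step I expect to be most delicate is the passage from the algebra property on $W^m_p$-functions with support in a fixed compact set to the full space $W^m_p(\Rd)$; this is however a standard reduction via translation and cut-off, and is unrelated to the weighted structure of the Kondratiev spaces.
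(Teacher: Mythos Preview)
Your argument for part (i) and for the necessity of the Sobolev algebra condition on $(m,p)$ in part (ii) follows the paper's proof essentially verbatim: monotonicity in $a$ combined with Theorem \ref{mult1} for sufficiency, and a localization away from the origin to reduce to the classical Sobolev case for the first half of the necessity.

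Where you diverge from the paper is in the necessity of $a\ge d/p$. The paper does not use scaling here; instead it invokes Lemma \ref{drei} from the Appendix to produce an explicit counterexample: for $a<d/p$ one can choose $b$ with $a-\tfrac{d}{p}<b<\tfrac{1}{2}(a-\tfrac{d}{p})$, so that $\tilde\varrho^{\,b}\psi\in\calk^m_{a,p}(\Rd,\{0\})$ while its square $\tilde\varrho^{\,2b}\psi^2$ fails to belong to the space. Your route via the homogeneity identity \eqref{ws-23} is correct and arguably cleaner, since for $\ell=0$ and $\supp u\subset\{0<|x|<1\}$ one in fact has $\rho(y/\lambda)=\rho(y)/\lambda$ exactly for $\lambda\ge 1$, so the $\asymp$ in \eqref{ws-23} is an equality and the implied constants do not depend on $\lambda$; the conclusion $\|u^2|\calk^m_{a,p}\|\lesssim\lambda^{a-d/p}\|u|\calk^m_{a,p}\|^2$ then forces $a\ge d/p$ exactly as you write. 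The paper's approach has the advantage of being constructive (it exhibits a concrete function witnessing the failure), while yours avoids the appendix machinery entirely and recycles an argument already present in the proof of Proposition \ref{lemma-embedding-sobolev}. Both are valid.

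One minor remark on phrasing: your sentence ``a standard translation and cut-off argument then extends this to all of $W^m_p(\Rd)$'' slightly overstates what is needed. You do not actually have to promote the local algebra estimate to a global one; it suffices to note that the classical counterexamples to the Sobolev algebra property are themselves compactly supported (indeed, localizable to any prescribed open set), so failure of the condition on $(m,p)$ already contradicts the algebra estimate for functions supported in $K$. The paper is equally terse on this point, calling it ``an obvious cut-off argument''.
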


\begin{proof}
{\em Step 1.} Sufficiency.
As mentioned in Subsection \ref{sub1}, the spaces $\calk^{m}_{a,p} (\Omega,M)$ are monotone in $a$.  Since 
\[
\calk^{m}_{2a-\frac{d}{p},p} (\R^d,\R^\ell_*)
\hookrightarrow \calk^{m}_{a,p} (\R^d,\R^\ell_*) \qquad \mbox{if}\qquad a \ge\frac{d}{p}\, , 
\]
the claim follows from Theorem \ref{mult1}.
\\
{\em Step 2.} Necessity in case $\ell=0$. The necessity of the conditions $1<p<\infty$ and  $m>d/p$ or $p=1$ and $m\ge d$ can be reduced to the necessity 
in case of Sobolev spaces by using an obvious cut-off argument.
It remains to prove the necessity of $a \ge d/p$. Therefore we construct a counterexample in case $a < d/p$.
Employing Lemma \ref{drei} {in the Appendix} we conclude
\[
 \tilde{\varrho}^{b} \, \cdot \, \psi \in \calk^{m}_{a,p} (\R^d,\R^0_*) \qquad \text{if, and only if,} \qquad a-b < \frac dp
\]
as well as 
\[
 (\tilde{\varrho}^{b} \, \cdot \, \psi)^2 \in \calk^{m}_{a,p} (\R^d,\R^0_*) \qquad \text{if, and only if,}  \qquad a-2b < \frac dp \, .
\]
We choose $b< 0$ such that 
\[
 a -\frac  dp < b < \frac{a-d/p}{2} \, . 
\]
Then $ \tilde{\varrho}^{b} \, \cdot \, \psi \in \calk^{m}_{a,p} (\R^d,\R^0_*)$ but 
$(\tilde{\varrho}^{b} \, \cdot \, \psi)^2$ does not belong to it.
\end{proof}

As  the proof of Theorem \ref{mult1} shows, we also have the following slightly more general version.

\begin{cor}\label{lem-mult1}
	Let $1\le p < \infty$, $m,m_1,m_2 \in \N_0$, $a_1,a_2\in \R$, and $0 \le \ell < d$. If either 
$1< p < \infty$ and $\min(m_1,m_2)\geq m>d/p$ or $p=1$ and $\min(m_1,m_2)\geq m \ge d$, then there
	exists a constant $c$ s.t.
	\begin{equation}\label{ws-08}
		\|\, u\, \cdot \, v\,  |\calk^{m}_{a_1+a_2-\frac{d}{p},p} (\Rd,\R^\ell_*)\|
			\le c \, \|\, u\, |\calk^{m_1}_{a_1,p} (\Rd,\R^\ell_*)\| \, \|\, v\,  |\calk^{m_2}_{a_2,p} (\Rd,\R^\ell_*)\|
	\end{equation}
	holds for all $u\in\calk^{m_1}_{a_1,p}(\Rd,\R^\ell_*)$ and $v \in   \calk^{m_2}_{a_2,p} (\Rd,\R^\ell_*)$. 
\end{cor}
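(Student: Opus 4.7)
The plan is to follow the four-step structure of the proof of Theorem \ref{mult1} verbatim, replacing only the pointwise multiplication estimate on $W^m_p(D_1)$ with an unbalanced variant. Concretely, since $D_1$ is a bounded annular region satisfying the cone condition and the hypothesis on $m$ and $p$ ensures that $W^m_p(D_1)$ is a Banach algebra, the embeddings $W^{m_i}_p(D_1)\hookrightarrow W^m_p(D_1)$ (valid because $\min(m_1,m_2)\ge m$) yield
\[
\|\, u(2^{-j+1}\cdot)\,v(2^{-j+1}\cdot)\, |W^m_p(D_1)\|
	\lesssim \|\, u(2^{-j+1}\cdot)\,|W^{m_1}_p(D_1)\| \cdot \|\, v(2^{-j+1}\cdot)\,|W^{m_2}_p(D_1)\|.
\]

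First I would treat the case $\supp u, \supp v \subset \{x\in\Rd: |x'|\le 3/4\}$ in parallel to Step~2 of the proof of Theorem \ref{mult1}. Using the dyadic decomposition $\{D_j\}_{j\ge 1}$ from \eqref{eq-Dj}, the preliminary identity \eqref{ws-060} applied to $w=uv$, and the above unbalanced algebra estimate, one arrives at the analogue of \eqref{w-5} with $W^{m_1}_p(D_1)$ and $W^{m_2}_p(D_1)$ on the right-hand side. Then \eqref{ws-070}, applied separately with $m$ replaced by $m_1$ for $u$ and by $m_2$ for $v$, converts the Sobolev norms back into $\calk^{m_i}_{a_i,p}$-norms restricted to $D_j$. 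A direct check shows that the accumulated powers of $2$ collapse precisely when the target weight equals $a_1+a_2-\tfrac{d}{p}$, leaving
\[
\|\, uv\,|\calk^m_{a_1+a_2-d/p,p}(\Rd,\R^\ell_*)\|^p
	\lesssim \sum_{j\ge 1} \|\, u\,|\calk^{m_1}_{a_1,p}(D_j,\R^\ell_*)\|^p \cdot
		\|\, v\,|\calk^{m_2}_{a_2,p}(D_j,\R^\ell_*)\|^p.
\]
Bounding this sum by (sum in $j$)$\cdot$(sup in $j$) and invoking the finite overlap of the $D_j$ produces the required product of full Kondratiev norms.

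Second, I would handle the case where both supports lie in $\{|x'|\ge 1/4\}$, which reduces immediately to the standard Sobolev setting, since the weight $\rho$ is then equivalent to $1$; the claim follows from $W^{m_i}_p\hookrightarrow W^m_p$ together with the Sobolev algebra property \eqref{sobo1}. Finally, for general $u$ and $v$, I would introduce the cut-off functions $\eta,\tau\in C^\infty(\Rd)$ as in Step~4 of the proof of Theorem \ref{mult1}, split $uv=(u\eta)(v\eta)+(u(1-\eta^2))(v\tau)$, apply the two preceding subcases, and conclude by the pointwise multiplier property of $C^m$-functions listed in Subsection \ref{sub1}.

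The argument is essentially bookkeeping; the only delicate point I anticipate is the exponent cancellation in the first step, which identifies $a_1+a_2-d/p$ as the unique target weight for which the dyadic sum factors into a product of the two Kondratiev norms with the prescribed parameters. No genuinely new ideas beyond those already present in the proof of Theorem \ref{mult1} are required.
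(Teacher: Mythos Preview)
Your proposal is correct and is precisely what the paper has in mind: the paper does not give a separate proof but simply states that ``the proof of Theorem~\ref{mult1} shows'' the result, i.e., one reruns Steps~1--4 of that proof verbatim with the unbalanced Sobolev estimate $\|fg|W^m_p(D_1)\|\lesssim\|f|W^{m_1}_p(D_1)\|\,\|g|W^{m_2}_p(D_1)\|$ in place of~\eqref{sobo1}, and the exponent bookkeeping forces the target weight $a_1+a_2-d/p$ exactly as you describe.
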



\subsubsection{Multiplication with unbounded functions}


There are two known possibilities to extend \eqref{sobo1} to unbounded functions.
The first one is as follows. 
Let $1 <p< \infty$, $m_0 \in \N$, $m_1 \in \N_0$, and 
\be\label{sobo3}
\frac{d}{2p}  \le m_0 < \frac dp\, .
\ee 
Then there exists a constant $c>0$ such that
\be\label{sobo2}
\|u\, \cdot \,v|W^{m_1}_p(\Rd)\|\leq c \|u|W^{m_0}_p(\Rd)\|\, \|v|W^{m_0}_p(\Rd)\|, 
\ee
holds for all $u,v\in W^{m_0}_p(\Rd)$, where 
\be\label{sobo4}
 m_1 \le 2m_0 -\frac dp\, .
\ee
Observe that these restrictions are natural in such a context. To see this one may use the following family of test functions:
\[
 f_\alpha (x):= |x-x^0|^{-\alpha}\, \psi (x-x^0)\, , \qquad x \in \Rd, \quad \alpha >0\, .
\]
Here $x^0$ is an arbitrary point in $\Rd$ and $\psi \in C_0^\infty (\Rd)$ such that $\psi (0)>0$.
Elementary calculations yield $f_\alpha \in W^m_p (\Rd)$ if, and only if, $\alpha < \frac dp - m$, cf. \cite[Lemma~2.3.1/1]{RS}.
First we comment on the lower bound in \eqref{sobo3}.
If $m_0 = d/(2p)-\varepsilon$ for some $\varepsilon >0$, then we may choose $\alpha = d/(2p)$ getting $f_\alpha \in W^{m_0}_p (\Rd)$. 
But the product $f_\alpha \, \cdot \, f_\alpha $ does not belong to $L_p (\Rd)$
since the order of the singularity is $d/p$.\\
Secondly, we deal with \eqref{sobo4}.
Let $m_0$ satisfy \eqref{sobo3} and choose $\alpha = \frac dp - m_0 - \varepsilon$ with $\varepsilon>0$ small.
For the  product $f_\alpha \, \cdot \, f_\alpha $ we conclude that it belongs to $W^{m_1}_p (\Rd)$ if
\[
m_1 < \frac dp - 2\alpha = - \frac dp + 2 m_0 + 2\varepsilon\, .
\]
For $\varepsilon \downarrow 0$ the restriction \eqref{sobo4} follows.
\\
For all this we refer to \cite{RS}, in particular Corollary 4.5.2.

In terms of Kondratiev spaces the following can be shown.

\begin{satz}\label{mult111}
Let $1< p < \infty$, $m_0 \in \N$, $m_1 \in \N_0$, $0 \le \ell < d$, and $a_0 \in \R$. 
Suppose 
\[
\frac{d}{2p}  \le m_0 < \frac dp\, , \qquad 
m_1 \le 2\, m_0 -\frac dp, \qquad \mbox{and}\qquad a_1:= 2\Big(a_0 - \frac{d}{2p} \Big). 
\]
Then there exists a constant $c$ s.t.
\begin{equation}\label{sobo14}
\|\, u\, \cdot \, v\,  |\calk^{m_1}_{a_1,p} (\R^d,\R^\ell_*)
\|\le c \, \|\, u\, |\calk^{m_0}_{a_0,p} (\R^d,\R^\ell_*)\| \, \|\, v\,  |\calk^{m_0}_{a_0,p} (\R^d,\R^\ell_*)\|
\end{equation}
	holds for all $u, v \in   \calk^{m_0}_{a_0,p} (\R^d,\R^\ell_*)$. 
\end{satz}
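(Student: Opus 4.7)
My plan is to follow the blueprint of Theorem \ref{mult1} step by step, replacing the Sobolev algebra bound \eqref{sobo1} by the (unbounded) Sobolev multiplication bound \eqref{sobo2}. Concretely, I will decompose dyadically using the resolution of unity $(\varphi_j)_{j\ge 0}$ from Lemma \ref{zerlegungdereinsBeispiel1}, rescale each annulus $D_j$ to the fixed reference annulus $D_1$, apply \eqref{sobo2} on $D_1$, and bookkeep the scaling factors. The very choice $a_1 = 2a_0 - d/p$ in the statement is dictated precisely by this scaling.

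For $j \ge 1$ set $u_j(y) := u(2^{-j+1}y)$ and $v_j(y) := v(2^{-j+1}y)$, so that $(uv)_j = u_j v_j$. The scaling identities \eqref{ws-060} and \eqref{ws-070} used in the proof of Theorem \ref{mult1} give
\[
\sum_{|\beta|\le m_1} 2^{-j(|\beta|-a_1)p} \int_{D_j} |\partial^\beta(uv)(x)|^p\, dx
  \sim 2^{j(a_1 p - d)}\, \|u_j v_j | W^{m_1}_p(D_1)\|^p,
\]
and analogously
\[
2^{j(a_0 p - d)}\, \|u_j | W^{m_0}_p(D_1)\|^p
  \sim \sum_{|\beta|\le m_0} 2^{-j(|\beta|-a_0)p} \int_{D_j} |\partial^\beta u(x)|^p\, dx,
\]
with the same bound for $v$. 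Since $D_1$ is a bounded Lipschitz domain, Stein's extension transfers the inequality \eqref{sobo2} from $\R^d$ to $D_1$, yielding
\[
\|u_j v_j | W^{m_1}_p(D_1)\| \le c\, \|u_j | W^{m_0}_p(D_1)\|\, \|v_j | W^{m_0}_p(D_1)\|.
\]

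Concatenating these three estimates multiplies the scaling factors to $2^{j((a_1 - 2a_0)p + d)}$, which equals $1$ exactly for $a_1 = 2a_0 - d/p$. Setting $I_j(w,m,a) := \sum_{|\beta|\le m} 2^{-j(|\beta|-a)p}\int_{D_j}|\partial^\beta w|^p dx$, we obtain $I_j(uv,m_1,a_1) \lesssim I_j(u,m_0,a_0)\, I_j(v,m_0,a_0)$. Summing in $j\ge 1$ and using $\ell_1 \hookrightarrow \ell_\infty$ to pull out $\sup_j I_j(v,m_0,a_0) \le \|v|\calk^{m_0}_{a_0,p}\|^p$ while keeping $\sum_j I_j(u,m_0,a_0) \le \|u|\calk^{m_0}_{a_0,p}\|^p$ controls the contribution near $\R^\ell_*$. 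The piece away from $\R^\ell_*$ (where $\rho \sim 1$) is bounded directly by \eqref{sobo2}, and the two regimes are glued via the cut-off decomposition $uv = (u\eta)(v\eta) + (u(1-\eta^2))(v\tau)$ from Step 4 of the proof of Theorem \ref{mult1}, using the basic pointwise multiplier property of smooth bounded functions from Subsection \ref{sub1}. The only real technical point is the transfer of \eqref{sobo2} to the bounded Lipschitz annulus $D_1$, which is however routine; everything else is bookkeeping already developed for Theorem \ref{mult1}.
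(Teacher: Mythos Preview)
Your proposal is correct and follows essentially the same route as the paper: both proofs adapt the dyadic rescaling argument of Theorem~\ref{mult1} verbatim, replacing the algebra estimate \eqref{sobo1} by the subcritical product bound \eqref{sobo2} on the reference annulus $D_1$, and both observe that the choice $a_1 = 2a_0 - d/p$ is exactly what makes the scaling exponents match so that $\sum_j I_j(u)\cdot\sup_j I_j(v)$ controls the result. Your remark that the transfer of \eqref{sobo2} from $\R^d$ to the bounded Lipschitz set $D_1$ via Stein extension is the only point needing comment is apt, and the paper likewise takes this for granted.
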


\begin{proof}
We follow the same strategy as used in the proof of Theorem \ref{mult1}.
Therefore only some comments to the modification are made.
We have to modify \eqref{w-5}. Using \eqref{sobo2} instead of \eqref{sobo1} we find
\begin{eqnarray*}
\|\, u\,\cdot\,  v\, && \hspace{-0.7cm} |\calk^{m_1}_{a_1,p} (\R^d,\R^\ell_*)\|^p
\\
&\lesssim & \sum_{j=1}^\infty 2^{ja_1p} \, 2^{-dj}\,
\| \,u (2^{-j+1} \, \cdot \, )\, |W^{m_0}_p (D_1)\|^p \, 
	\|\, v (2^{-j+1} \, \cdot \, )\, |W^{m_0}_p (D_1)\|^p\,.
\end{eqnarray*}
 Now we continue as before and obtain
\begin{align*}
\|\, u\, \cdot \, v\,  |\calk^{m_1}_{a_1,p} (\R^d,\R^\ell_*)\|^p
& \lesssim \sum_{j=1}^\infty 2^{ja_1p} \, 2^{-dj} \biggl(\sum_{|\beta| \le m_0} 
2^{-j|\beta|p} \, 2^{dj} \, \int_{D_j} | \, \partial^\beta u (x)  |^p dx\biggr)
\\
&\qquad\times \quad \biggl(\sum_{|\beta| \le m_0} 2^{-j|\beta|p} \, 2^{dj} \, 
\int_{D_j} | \, \partial^\beta v (x)  |^p\,  dx\biggr)
\\
& \lesssim \sum_{j=1}^\infty \biggl(\sum_{|\beta| \le m_0} 
2^{-j|\beta|p} \, 2^{j(\frac {a_1}{2} + \frac{d}{2p})p} \, 
\int_{D_j}| \, \partial^\beta u (x)  |^p\, dx\biggr)
\\
& \qquad\times \quad \sup_{j\ge 0} \biggl(\sum_{|\beta| \le m_0} 
2^{-j|\beta|p} \, 2^{j(\frac {a_1}{2} + \frac{d}{2p})p} \, 
\int_{D_j} |\, \partial^\beta v (x)  |^p\,  dx\biggr)
\\
& \lesssim  \|\, u\, |\calk^{m_0}_{\frac {a_1}{2} + \frac{d}{2p},p} (\R^d,\R^\ell_*)\|^p\ 
\|\,  v\,  |\calk^{m_0}_{\frac {a_1}{2} + \frac{d}{2p},p} (\R^d,\R^\ell_*)\|^p\, .
\end{align*} 
 This proves the claim.
\end{proof}

\begin{rem}
 \rm
Observe that we have to accept a loss in regularity in \eqref{sobo14} since, by \eqref{sobo3} we always have
$m_1 < m_0$.
\end{rem}

As before there exists a generalization in the dependence with respect to the parameter $a$.
It will be based on the following product estimate in case of Sobolev spaces, see Corollary 4.5.2 in \cite{RS}. 
Let $1 <p< \infty$, $m_0, m_1 \in \N$, $m_2 \in \N_0$, 
\be\label{sobo10}
\frac{d}{p} \le m_0 + m_1, \qquad \mbox{and}\qquad  m_0, m_1 < \frac dp\, .
\ee 
Then there exists a constant $c>0$ such that
\be\label{sobo12}
\|u\, \cdot \,v|W^{m_2}_p(\Rd)\|\leq c \|u|W^{m_0}_p(\Rd)\|\, \|v|W^{m_1}_p(\Rd)\|, 
\ee
holds for all $u\in W^{m_0}_p(\Rd)$ and $v\in W^{m_1}_p(\Rd)$, where 
\be\label{sobo11}
 m_2 \le m_0+ m_1 -\frac dp\, .
\ee

\begin{prop}\label{mult113}
Let $1< p < \infty$,  $m_0, m_1 \in \N$, $m_2 \in \N_0$, $0 \le \ell < d$, and $a_0, a_1 \in \R$. 
Suppose \eqref{sobo10} and \eqref{sobo11}. We put  
\[
 a_2:= a_0 + a_1 - \frac{d}{p} \, .
\]
Then there exists a constant $c$ s.t.
	\begin{equation*}
		\|\, u\, \cdot \, v\,  |\calk^{m_2}_{a_2,p} (\R^d,\R^\ell_*)
			\|\le c \, \|\, u\, |\calk^{m_0}_{a_0,p} (\R^d,\R^\ell_*)\| \, \|\, v\,  |\calk^{m_1}_{a_1,p} (\R^d,\R^\ell_*)\|
	\end{equation*}
	holds for all $u \in   \calk^{m_0}_{a_0,p} (\R^d,\R^\ell_*)$ and all $v \in   \calk^{m_1}_{a_1,p} (\R^d,\R^\ell_*)$. 
\end{prop}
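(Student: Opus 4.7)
The plan is to follow the template established for Theorems \ref{mult1} and \ref{mult111}, with the Sobolev input replaced by the asymmetric estimate \eqref{sobo12}. First I would carry out a reduction by cutoff: choose $\eta \in C^\infty(\R^d)$ with $\eta \equiv 1$ on $\{|x'|\le 1/2\}$ and $\supp\eta \subset \{|x'|\le 3/4\}$, together with $\tau \in C^\infty(\R^d)$ equal to $1$ on $\supp(1-\eta^2)$ and supported in $\{|x'|\ge 1/4\}$. Writing $uv = (u\eta)(v\eta) + (u(1-\eta^2))(v\tau)$ and invoking the pointwise multiplier property from Subsection \ref{sub1}, it suffices to establish the product inequality for each summand. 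The second summand is supported in $\{|x'|\ge 1/4\}$, where $\rho \equiv 1$, so the Kondratiev norms reduce to classical Sobolev norms and \eqref{sobo12} applies directly.

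For the first summand, apply the dyadic decomposition $\{\varphi_j\}_{j\ge 0}$ from Proposition \ref{zerlegungdereins} and Lemma \ref{zerlegungdereinsBeispiel1}, so that $\supp\varphi_j\subset D_j$ and $\varphi_j(x)=\varphi_1(2^{j-1}x)$ for $j\ge 1$. The key scaling identity, established in the proofs of Theorems \ref{mult1} and \ref{mult111}, is
\[
\|w|\calk^m_{a,p}(\R^d,\R^\ell_*)\|_{|D_j}^p \sim 2^{(j-1)(ap-d)}\|w(2^{-j+1}\cdot)|W^m_p(D_1)\|^p
\]
for functions $w$ supported near $D_j$. Applying this to $u_j := \varphi_j u$, $v_j := \varphi_j v$ and $u_j v_j = \varphi_j^2 uv$, and inserting the Sobolev product estimate \eqref{sobo12} (valid on the fixed bounded Lipschitz domain $D_1$) yields
\[
\|u_j v_j|\calk^{m_2}_{a_2,p}\|^p \lesssim 2^{(j-1)((a_2-a_0-a_1)p+d)} \, \|u_j|\calk^{m_0}_{a_0,p}\|^p \, \|v_j|\calk^{m_1}_{a_1,p}\|^p,
\]
with a constant independent of $j$. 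The exponent on the right vanishes exactly because $a_2 = a_0+a_1-d/p$, so the factor is $1$; this is the algebraic heart of the proposition.

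Summing in $j$ via $\sum_j A_j B_j \le (\sup_j B_j)(\sum_j A_j)$ gives
\[
\sum_{j\ge 1}\|u_j|\calk^{m_0}_{a_0,p}\|^p\|v_j|\calk^{m_1}_{a_1,p}\|^p
\le \Bigl(\sup_{j\ge 1}\|v_j|\calk^{m_1}_{a_1,p}\|^p\Bigr)\Bigl(\sum_{j\ge 1}\|u_j|\calk^{m_0}_{a_0,p}\|^p\Bigr),
\]
and both factors are controlled by $\|v|\calk^{m_1}_{a_1,p}\|^p$ and $\|u|\calk^{m_0}_{a_0,p}\|^p$ respectively by the norm equivalence \eqref{ws-160}. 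The $j=0$ contribution, where the weight is comparable to $1$ and no rescaling is required, is handled by applying \eqref{sobo12} directly on $D_0$. Combining everything through the cutoff decomposition completes the argument.

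The main obstacle is the precise accounting of powers of $2$: the relation $a_2 = a_0+a_1-d/p$ is exactly what forces the scaling exponents produced by the rescaling, the Sobolev multiplication on $D_1$, and the inverse rescaling to balance out to $1$, allowing $\sum_j$ to be absorbed by the $\ell^1 \cdot \ell^\infty$ splitting. Beyond this bookkeeping, and the (standard) verification that \eqref{sobo12} holds with uniform constants on the fixed domain $D_1$, no new analytic difficulty arises that was not already resolved in Theorems \ref{mult1} and \ref{mult111}.
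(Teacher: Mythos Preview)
Your proposal is correct and follows essentially the same route as the paper, whose proof of Proposition \ref{mult113} merely reads ``follows along the same lines as the proof of Theorem \ref{mult111}'': the cutoff reduction to $\{|x'|\le 3/4\}$, the dyadic rescaling to $D_1$, the application of the asymmetric Sobolev product estimate \eqref{sobo12} there, and the observation that the choice $a_2=a_0+a_1-d/p$ makes the scaling exponent vanish are exactly the ingredients used. One small cosmetic point: in Step~2 of Theorem \ref{mult1} the paper works with the raw integrals over the overlapping annuli $D_j$ rather than with the localized pieces $\varphi_j u$, $\varphi_j v$, which avoids having to argue that $\sum_j\varphi_j^2$ still yields the needed norm equivalence---but this is an inessential variant and your version goes through just as well.
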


\begin{proof}
 The proof follows along  the same lines as the proof of Theorem \ref{mult111}.
\end{proof}

There is a  second possibility. Instead of a shift in the smoothness parameters as above, we shall now allow a shift in the integrability.
Again our approach will be based on Theorem 4.5.2 and Corollary 4.5.2 in \cite{RS}.
Let $1 <p< \infty$, $m \in \N$, and 
\be\label{sobo7}
2d\, \Big(\frac{1}{p}-\frac 12 \Big)  < m < \frac dp  \, .
\ee 
We put 
\be\label{sobo9}
t:= \frac{d}{2 \frac dp  -m}\, .
\ee
Then there exists a constant $c>0$ such that
\be\label{sobo6}
\|u\, \cdot \,v|W^{m}_{t}(\Rd)\|\leq c \|u|W^{m}_p(\Rd)\|\, \|v|W^{m}_p(\Rd)\| 
\ee
holds for all $u,v\in W^{m}_p(\Rd)$. It is easy to see that the left-hand side of  \eqref{sobo7} is equivalent to $t>1$.
The optimality of $t$ follows by employing the family $f_\alpha$.
Let $t < t'$ and put
\[
 \varepsilon := \frac d2 \Big(\frac 1t -\frac 1{t'} \Big).
\]
Then the function $f_\alpha$ with 
$\alpha = \frac dp - m-\varepsilon$ belongs to $W^m_p (\Rd)$ and 
$f_\alpha \, \cdot \, f_\alpha \not\in W^m_{t'}(\Rd)$ 
because of $2\alpha = \frac{d}{t'}-m$.

\begin{satz}\label{mult112}
Let $1< p < \infty$, $m \in \N$,  $0 \le \ell < d$, and $a_0 \in \R$. 
Let $t$ be defined as in \eqref{sobo9} and put
$ \tilde{a}_1:= 2 a_0-m$.
Suppose \eqref{sobo7}. Then for any $a_1 < \tilde{a}_1$ 
there exists a constant $c$ s.t.
\begin{equation*}
\|\, u\, \cdot \, v\,  |\calk^{m}_{a_1,t} (\R^d,\R^\ell_*)
\|\le c \, \|\, u\, |\calk^{m}_{a_0,p} (\R^d,\R^\ell_*)\| \, \|\, v\,  |\calk^{m}_{a_0,p} (\R^d,\R^\ell_*)\|
	\end{equation*}
	holds for all $u, v \in   \calk^{m}_{a_0,p} (\R^d,\R^\ell_*)$. 
\end{satz}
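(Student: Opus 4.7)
The plan is to follow the four-step template used in the proofs of Theorems \ref{mult1} and \ref{mult111}. A cutoff argument identical to Steps 3--4 of the proof of Theorem \ref{mult1} reduces matters to the case $\supp u, \supp v \subset \{x\in\R^d: |x'|\le 3/4\}$, since on the complementary region $\{|x'|\ge 1/4\}$ the weight satisfies $\rho\equiv 1$ and \eqref{sobo6} applies directly on $\R^d$. For the remaining weighted piece one localizes via the partition of unity $(\varphi_j)_{j\ge 0}$ from Lemma \ref{zerlegungdereinsBeispiel1}, so that Proposition \ref{zerlegungdereins} yields
\[
\|uv\,|\calk^m_{a_1,t}(\R^d,\R^\ell_*)\|^t \sim \sum_{j=0}^\infty \|\varphi_j(uv)\,|\calk^m_{a_1,t}(\R^d,\R^\ell_*)\|^t.
\]

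The rescaling $y = 2^{j-1}x$, carried out as in the derivations of \eqref{ws-060} and \eqref{ws-070}, together with the Sobolev product estimate \eqref{sobo6} applied on the rescaled annulus $D_1$ (which carries the cone condition in the transverse direction, so \eqref{sobo6} transfers from $\R^d$ to $D_1$ by a standard cutoff/extension), yields for each $j\ge 0$
\[
\|\varphi_j(uv)\,|\calk^m_{a_1,t}\|^t \lesssim 2^{ja_1t-jd}\,\|u(2^{-j+1}\cdot)\,|W^m_p(D_1)\|^t\,\|v(2^{-j+1}\cdot)\,|W^m_p(D_1)\|^t.
\]
Inverting the scaling via \eqref{ws-070} and introducing the local quantities
\[
C_j(w)^p := \sum_{|\beta|\le m} 2^{-j(|\beta|-a_0)p}\int_{D_j}|\partial^\beta w(x)|^p\,dx,
\]
which by Proposition \ref{zerlegungdereins} satisfy $\sum_{j\ge 0} C_j(w)^p \sim \|w\,|\calk^m_{a_0,p}\|^p$, a direct count of exponents yields
\[
\|\varphi_j(uv)\,|\calk^m_{a_1,t}\|^t \lesssim 2^{j\delta}\, C_j(u)^t\, C_j(v)^t,\qquad \delta := t(a_1 + 2d/p - 2a_0) - d.
\]

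The decisive algebraic fact is that the defining relation $d/t = 2d/p - m$ of \eqref{sobo9} collapses the exponent to $\delta = t(a_1 - \tilde a_1)$, which is \emph{strictly} negative exactly under the hypothesis $a_1 < \tilde a_1$. Since \eqref{sobo7} forces $1 < t < p$, one can apply the supremum-plus-H\"older argument from the proof of Theorem \ref{mult1}: writing $2^{j\delta} = 2^{j\delta/2}\cdot 2^{j\delta/2}$ and distributing, one bounds
\[
\sum_{j\ge 0} 2^{j\delta} C_j(u)^t C_j(v)^t \le \Bigl(\sup_{j\ge 0} 2^{j\delta/2} C_j(v)\Bigr)^{t}\, \sum_{j\ge 0} \bigl(2^{j\delta/2} C_j(u)\bigr)^t.
\]
The supremum is dominated by $\sup_j C_j(v)\le \bigl(\sum_j C_j(v)^p\bigr)^{1/p}\lesssim \|v\,|\calk^m_{a_0,p}\|$ (using $2^{j\delta/2}\le 1$ for $j\ge 0$, $\delta<0$), and H\"older's inequality with conjugate exponents $p/t$ and $p/(p-t)$ controls the remaining sum by a convergent geometric series (summable precisely because $\delta<0$) times $\|u\,|\calk^m_{a_0,p}\|^t$. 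The main technical hurdle in this scheme is justifying the transfer of \eqref{sobo6} to the transverse annulus $D_1$, which is bounded in $x'$ but unbounded in the tangential directions; this is handled as in the proof of Theorem \ref{mult1} by exploiting that the integrands of interest come multiplied by $\varphi_1$.
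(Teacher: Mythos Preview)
Your proof is correct and follows the same localization-and-rescaling template as the paper. The only substantive difference lies in how you close the final summation over $j$. The paper writes the exponent as $2^{-j\varepsilon}$ with $\varepsilon=\tfrac{t}{2}(\tilde a_1-a_1)>0$, pulls out the convergent series $\sum_j 2^{-j\varepsilon}$, and then bounds \emph{both} localized factors by suprema over $j$, landing on $\|u|\calk^m_{a_2+\varepsilon/t,p}\|\cdot\|v|\calk^m_{a_2,p}\|$ with $a_2=\tfrac{a_1}{2}+\tfrac{m}{2}$; monotonicity in $a$ then recovers the $a_0$-norm on both sides. You instead bound the $v$-factor by a supremum and handle the $u$-factor via H\"older with exponents $p/t$ and $p/(p-t)$, which is legitimate since \eqref{sobo7} forces $1<t<p$. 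Both arguments exploit the same strict negativity $\delta=t(a_1-\tilde a_1)<0$; your route is marginally sharper in that it sums $C_j(u)^p$ directly rather than passing through a supremum, but for the stated result this buys nothing extra. Your acknowledgment of the transfer of \eqref{sobo6} from $\R^d$ to the annulus $D_1$ is also more explicit than the paper, which simply uses it without comment.
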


\begin{proof}
We follow the same strategy as used in the proof of Theorem \ref{mult1}.
Again it suffices to comment on the modifications needed in comparison with the proof of Theorem \ref{mult1}.
We need to modify \eqref{w-5} by using \eqref{sobo6}, which gives 
\begin{eqnarray}\label{w-8}
\|\, u\, \cdot \, v\, && \hspace{-0.7cm} |\calk^{m}_{a_1,t} (\R^d,\R^\ell_*)\|^t
\nonumber
\\
&\lesssim & \sum_{j=1}^\infty 2^{ja_1t} \, 2^{-dj}\,
\| \,u (2^{-j+1} \, \cdot \, )\, |W^{m}_p (D_1)\|^t \,
	\|\, v (2^{-j+1} \, \cdot \, )\, |W^{m}_p (D_1)\|^t\,.
\end{eqnarray}
Let $\varepsilon := \frac t2 \, (\tilde{a}_1 -a_1) >0$ and $a_2 := \frac {a_1}{2} + d (\frac 1p -\frac{1}{2t}) $. 
Observe $d (\frac 1p -\frac{1}{2t}) = \frac m2$.
It follows
\begin{align*}
\|\, u\, \cdot \, v\,  |\calk^{m}_{a_1,t} (\R^d,\R^\ell_*)\|^t
& \lesssim \sum_{j=1}^\infty 2^{ja_1t} \, 2^{-dj} \biggl(\sum_{|\beta| \le m} 
2^{-j|\beta|p} \, 2^{dj} \, \int_{D_j} | \, \partial^\beta u (x)  |^p dx\biggr)^{t/p}
\\
&\qquad\times \quad \biggl(\sum_{|\beta| \le m} 2^{-j|\beta|p} \, 2^{dj} \, 
\int_{D_j} | \, \partial^\beta v (x)  |^p\,  dx\biggr)^{t/p}
\\
& \lesssim \sum_{j=1}^\infty 2^{-j\varepsilon} \sup_{j\ge 0} 2^{j\varepsilon} \biggl(\sum_{|\beta| \le m} 
2^{-j|\beta|p} \, 2^{j(\frac {a_1}{2} + d (\frac 1p -\frac{1}{2t}))p} \, 
\int_{D_j}| \, \partial^\beta u (x)  |^p\, dx\biggr)^{t/p}
\\
& \qquad\times \quad \sup_{j\ge 0} \biggl(\sum_{|\beta| \le m} 
2^{-j|\beta|p} \, 2^{j(\frac {a_1}{2} + d (\frac 1p -\frac{1}{2t}))p} \, 
\int_{D_j} |\, \partial^\beta v (x)  |^p\,  dx\biggr)^{t/p}
\\
& \lesssim  \|\, u\, |\calk^{m}_{a_2+ \varepsilon /t,p} (\R^d,\R^\ell_*)\|^t\ 
\|\,  v\,  |\calk^{m}_{a_2,p} (\R^d,\R^\ell_*)\|^t\, .
\end{align*} 
Because of  $a_0= a_2+ \varepsilon /t$ and the monotonicity of Kondratiev spaces with respect to $a$, 
{i.e., $\calk^{m}_{a_2 +\frac{\varepsilon}{t},p} (\Omega,M)\hookrightarrow \calk^{m}_{a_2,p} (\Omega,M)$}, 
the claim follows.
\end{proof}

\begin{rem}
 \rm
We compare Theorem \ref{mult111} and Theorem \ref{mult112}.
Observe
\[
\calk^{m}_{2a_0-m,t} (\R^d,\R^\ell_*) \hookrightarrow \calk^{m_1}_{2(a_0-d/(2p)),p} (\R^d,\R^\ell_*) \, , 
\]
see Proposition \ref{lemma-embedding-sobolev}. However, again by Proposition \ref{lemma-embedding-sobolev}
it follows 
\[
\calk^{m}_{2a_0-m-\varepsilon,t} (\R^d,\R^\ell_*) \not\subset \calk^{m_1}_{2(a_0-d/(2p)),p} (\R^d,\R^\ell_*) \, , 
\]
for any $\varepsilon >0$. In addition, the assumptions in
Theorem \ref{mult111} and Theorem \ref{mult112} are different. 
\end{rem}

As already mentioned  in the introduction we  are interested in 
some semilinear elliptic PDEs with nonlinearity given by $u^n$ for some {$n>1$}.
For later use we now collect  the consequences of our previous results for the  mapping $u \mapsto u^n$.

\begin{cor}\label{power1}
Let  $m_0,m\in \N$, $m_1\in\N_0$,   $a\in\R$, and $0 \le \ell < d$.
\\
{\rm (i)}  Let either $1 < p< \infty$ and  $m>d/p$ or $p=1$ and $m\ge d$. Then we have for all natural numbers $n>1$, 
	\[
		u^n\in\calk^m_{na-\frac{d(n-1)}{p},p}(\Rd,\R^\ell_*)\quad\text{for all}\quad u\in\calk^m_{a,p}(\Rd,\R^\ell_*)\,,
	\]
	together with the estimate
	\[
		\|\, u^n\, |\calk^{m}_{na-\frac{d(n-1)}{p},p} (\Rd,\R^\ell_*)\|
			\leq c^{n-1}\, \|\, u\, |\calk^{m}_{a,p} (\Rd,\R^\ell_*)\|^n\,,
	\]
where $c$ is the constant in \eqref{ws-08}.
\\
{\rm (ii)} Let  $1 < p< \infty$ and  $d/(2p)\le m_0 <d/p$. 
Let $m_1 \le 2m_0 - d/p$. Then we have 
	\[
		u^2 \in \calk^{m_1}_{2a-\frac{d}{p},p}(\Rd,\R^\ell_*)\quad\text{for all}\quad 
		u\in\calk^{m_0}_{a,p}(\Rd,\R^\ell_*)\,,
	\]
	together with the estimate
	\[
		\|\, u^2\, |\calk^{m_1}_{2a-\frac{d}{p},p} (\Rd,\R^\ell_*)\|
			\leq c \, \|\, u\, |\calk^{m_0}_{a,p} (\Rd,\R^\ell_*)\|^2\,,
	\]
where $c$ is the constant in \eqref{sobo14}.
\\
{\rm (iii)} Let  $1 < p< \infty$, $t$ as in \eqref{sobo9} and  
\[
2d\, \Big(\frac{1}{p}-\frac 12 \Big)  < m < \frac dp  \, .
\]
Then, for any $a_1 < 2a-m$, there exists a constant $c$ such that 
	\[
		u^2 \in \calk^{m}_{a_1,t}(\Rd,\R^\ell_*)\quad\text{for all}\quad 
		u\in\calk^{m}_{a,p}(\Rd,\R^\ell_*)\,,
	\]
	together with the estimate
	\[
		\|\, u^2\, |\calk^{m}_{a_1,t} (\Rd,\R^\ell_*)\|
			\leq c \, \|\, u\, |\calk^{m}_{a,p} (\Rd,\R^\ell_*)\|^2\,.
	\]
\end{cor}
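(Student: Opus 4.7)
The plan is to derive each of the three parts by specializing the product estimates already proved in Theorems \ref{mult1}, \ref{mult111}, \ref{mult112} and their corollaries. Part (i) is the only one that requires genuine work, namely an induction on $n$; parts (ii) and (iii) reduce to setting $u=v$ in the corresponding theorems.

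For part (i) I would argue by induction on $n\ge 2$. The base case $n=2$ is Corollary \ref{mult11} (equivalently Theorem \ref{mult1}) applied with $u=v$, yielding
\[
\|\, u^2\,|\calk^m_{2a-\tfrac{d}{p},p}(\Rd,\R^\ell_*)\|
 \le c\,\|\,u\,|\calk^m_{a,p}(\Rd,\R^\ell_*)\|^2.
\]
For the inductive step, suppose the statement holds for some $n\ge 2$, so that $u^n\in\calk^m_{na-\frac{d(n-1)}{p},p}$. Now apply Corollary \ref{lem-mult1} to the product $u^{n+1}=u^n\cdot u$ with $m_1=m_2=m$, $a_1=na-\frac{d(n-1)}{p}$ and $a_2=a$; the hypothesis on $m,p$ is exactly the one available here, so the corollary gives
\[
\|\,u^{n+1}\,|\calk^m_{a_1+a_2-\frac{d}{p},p}\|
 \le c\,\|\,u^n\,|\calk^m_{a_1,p}\|\,\|\,u\,|\calk^m_{a,p}\|.
\]
A direct computation shows $a_1+a_2-\tfrac{d}{p}=(n+1)a-\tfrac{dn}{p}$, which is the target weight for $n+1$. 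Combining with the inductive hypothesis yields the constant $c^n$, completing the induction and producing the asserted bound $c^{n-1}\|u\|^n$.

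For part (ii) it suffices to invoke Theorem \ref{mult111} with $u=v$, $a_0=a$ and the given $m_0,m_1$; the hypotheses $\frac{d}{2p}\le m_0<\frac{d}{p}$ and $m_1\le 2m_0-\frac{d}{p}$ are exactly those required, and the resulting weight is $a_1=2(a_0-\tfrac{d}{2p})=2a-\tfrac{d}{p}$. For part (iii) I would similarly apply Theorem \ref{mult112} with $u=v$, $a_0=a$, so that $\tilde a_1=2a_0-m=2a-m$; the theorem then yields the estimate for every $a_1<2a-m$.

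The only nontrivial step is the bookkeeping of weights in the induction of part (i), and even there the arithmetic is straightforward. I do not expect any real obstacle since every ingredient — the product estimates of Theorems \ref{mult1}, \ref{mult111}, \ref{mult112} and the slightly sharper Corollary \ref{lem-mult1} — has already been established with exactly the indices needed here.
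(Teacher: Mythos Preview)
Your proposal is correct and follows essentially the same route as the paper: part (i) is obtained by induction, applying Corollary \ref{lem-mult1} to the factorization $u^{n+1}=u^n\cdot u$ (the paper phrases it as applying the corollary to $u$ and $u^{n-1}$), and parts (ii) and (iii) are immediate specializations of Theorems \ref{mult111} and \ref{mult112} with $u=v$. The weight bookkeeping and the tracking of the constant $c^{n-1}$ are exactly as you describe.
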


\begin{proof}
The result in (i) follows by induction upon applying Corollary \ref{lem-mult1}  to $u$ and $u^{n-1}$.
Parts (ii) and (iii) are just  special cases of Theorem \ref{mult111} and Theorem \ref{mult112}, respectively.
\end{proof}

\begin{rem}
\rm
Applying Proposition \ref{mult113} one can use the same induction argument as for the proof of part (i).
However, the results obtained in this way seem to make sense only in the very special situation that 
$d/p$ is a natural number.
The reason for that can be found in the restriction \eqref{sobo11}. 
If $d/p$ is not a natural number one is loosing too much regularity through the induction process.
For $d/p \in \{1, \ldots , d-1\}$ and $(n-1)\frac dp \le nm$ one obtains:
\[
u^n\in\calk^{nm- (n-1)\frac dp}_{na-(n-1)\frac{d}{p},p}(\Rd,\R^\ell_*)\quad\text{for all}\quad u\in\calk^m_{a,p}(\Rd,\R^\ell_*)\,,
\]
together with the estimate
\[
\|\, u^n\, |\calk^{nm-(n-1)\frac dp}_{na-(n-1)\frac{d}{p},p} (\Rd,\R^\ell_*)\|
\leq c^{n-1}\|\, u\, |\calk^{m}_{a,p} (\Rd,\R^\ell_*)\|^n\,. 
\]
\end{rem}


\subsubsection{Moser-type inequalities}


Moser \cite{Mo} was the first to see that one can improve the estimates of products in case one knows 
that both factors are bounded.
In fact the following is true: Let $1 < p<\infty$ and  $m\in\N$. Then $W^m_p (\Rd)$ is an algebra with respect to pointwise multiplication
and there exists a constant c such that 
\begin{eqnarray}\label{ws-30}
\|\, u\, \cdot \, v\,  |W^{m}_{p} (\Rd)\|
			 & \le &  c \, (\, \|\, u\, |W^{m}_{p} (\Rd)\| \, \| \, v \, |L_\infty (\Rd)\| 
\nonumber
\\
&& \qquad \qquad\qquad +\quad 
\, \|\, v\,  |W^{m}_{p} (\Rd)\|\, \| \, {u} \, |L_\infty (\Rd)\|)
	\end{eqnarray}
holds for all $u,v \in W_p^m (\Rd) \cap L_\infty (\Rd) $, see \cite[4.6.4]{RS}.
\\
Inserting this modification into Step 2 of the proof of Theorem \ref{mult1}  we obtain the following.

\begin{satz}\label{thm-mult11}
Let $1< p < \infty$, $m \in \N$, $0 \le \ell < d$, and $a \in \R$. 
\\
Then there exists a constant $c$ s.t.
\begin{eqnarray*}
\|\, u\, \cdot \, v\,  |\calk^{m}_{a,p} (\R^d,\R^\ell_*)\| & \le &  c \, 
(\|\, u\, |\calk^{m}_{a,p} (\R^d,\R^\ell_*)\| \, \| \, v \, |L_\infty (\Rd)\| 
\\
&& \qquad \qquad + \quad  
\|\, v\,  |\calk^{m}_{a,p} (\R^d,\R^\ell_*)\|\, \| \, u \, |L_\infty (\Rd)\|)
\end{eqnarray*}
	holds for all $u, v \in   \calk^{m}_{a,p} (\R^d,\R^\ell_*)\cap L_\infty (\Rd)$. 
\end{satz}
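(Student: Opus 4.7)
My plan is to follow the four-step strategy of the proof of Theorem~\ref{mult1} verbatim, replacing the Sobolev algebra bound \eqref{sobo1} by the Moser inequality \eqref{ws-30}, and exploiting the fact that the $L_\infty$-norm is invariant under the dyadic rescaling $y=2^{j-1}x$. The crucial observation is that $\|v(2^{-j+1}\cdot)|L_\infty(D_1)\|\leq\|v|L_\infty(\R^d)\|$, so the $L_\infty$ factor can simply be pulled out of the $j$-sum. This eliminates the need for the ``supremum versus $\ell_1$'' splitting trick which in Theorem~\ref{mult1} forced us to halve the weight parameter $a$; here we gain back the full weight $a$ on the $\calk^m_{a,p}$-factor.

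First I would treat the case $\supp u,\supp v\subset\{|x'|\le 3/4\}$. As in \eqref{w-5}, the preliminary estimates \eqref{ws-060}--\eqref{ws-070} give
\[
\|u\cdot v|\calk^m_{a,p}(\R^d,\R^\ell_*)\|^p\lesssim\sum_{j=1}^\infty 2^{jap}2^{-dj}\,\|u(2^{-j+1}\cdot)\,v(2^{-j+1}\cdot)|W^m_p(D_1)\|^p.
\]
At this point I would apply \eqref{ws-30} on $D_1$ (after extending the scaled factors to $\R^d$ via Stein's operator, or equivalently using the cone property of $D_1$) together with the scale-invariance of the $L_\infty$-norm to obtain
\[
\|u(2^{-j+1}\cdot)v(2^{-j+1}\cdot)|W^m_p(D_1)\|\lesssim \|u(2^{-j+1}\cdot)|W^m_p(D_1)\|\,\|v|L_\infty\|+\|v(2^{-j+1}\cdot)|W^m_p(D_1)\|\,\|u|L_\infty\|.
\]
Raising to the $p$-th power, summing in $j$, and inverting the rescaling via \eqref{ws-070}, one term reads
\[
\|v|L_\infty\|^p\sum_{j=1}^\infty 2^{jap}2^{-dj}\|u(2^{-j+1}\cdot)|W^m_p(D_1)\|^p=\|v|L_\infty\|^p\sum_{j=1}^\infty\sum_{|\beta|\le m}2^{j(a-|\beta|)p}\int_{D_j}|\partial^\beta u|^p\,dx,
\]
which is bounded by $\|v|L_\infty\|^p\,\|u|\calk^m_{a,p}\|^p$; the symmetric term contributes $\|u|L_\infty\|^p\,\|v|\calk^m_{a,p}\|^p$. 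The desired estimate then follows after taking $p$-th roots via the elementary inequality $(A^p+B^p)^{1/p}\le A+B$ valid for $A,B\ge 0$ and $p\ge 1$.

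The case $\supp u,\supp v\subset\{|x'|\ge 1/4\}$ is then immediate because on this region the weight $\rho$ is bounded above and below, so $\calk^m_{a,p}$ is equivalent to $W^m_p$ and \eqref{ws-30} applies directly. Finally, for the general case I would use exactly the cutoff decomposition from Step~4 of the proof of Theorem~\ref{mult1}: choose $\eta,\tau\in C^m(\R^d)$ with $\eta\equiv 1$ on $\{|x'|\le 1/2\}$, $\supp\eta\subset\{|x'|\le 3/4\}$, $\tau\equiv 1$ on $\supp(1-\eta^2)$, and $\supp\tau\subset\{|x'|\ge 1/4\}$, and write $uv=(u\eta)(v\eta)+\bigl(u(1-\eta^2)\bigr)(v\tau)$. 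Each summand is controlled by one of the two previous cases, combined with the pointwise multiplier property of smooth cutoffs on $\calk^m_{a,p}$ recorded in Subsection~\ref{sub1} and the trivial bound $\|u\eta|L_\infty\|\le\|u|L_\infty\|$ (since $0\le\eta\le 1$). I do not expect any real obstacle: the only subtle point is the scale-invariance of $L_\infty$, which however is precisely what makes the argument cleaner than that of Theorem~\ref{mult1}.
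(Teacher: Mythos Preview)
Your proposal is correct and takes essentially the same approach as the paper, which simply says ``Inserting this modification into Step~2 of the proof of Theorem~\ref{mult1} we obtain the following.'' You have spelled out exactly what that one-line instruction means, including the key observation that scale-invariance of the $L_\infty$-norm lets you pull the factor out of the $j$-sum and thereby recover the full weight $a$ without the halving that occurred in Theorem~\ref{mult1}.
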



\subsubsection{Products of Kondratiev spaces with different $p$}


In the sequel we  are not interested in the most general situation, only in a few special cases.
Quite easy to prove is the following.

\begin{prop}\label{mult2a}
Let $1 \le  p <  \infty$, $m \in \N$, $0 \le \ell < d$, and $a \in \R$. Then there exists a constant $c$ s.t.
\begin{equation*}
		\|\, u\, \cdot \, v\,  |\calk^{m}_{a,p} (\R^d,\R^\ell_*)
			\|\le c \, \|\, v\, |\calk^{m}_{0,\infty} (\R^d,\R^\ell_*)\| \, \|\, u\,  |\calk^{m}_{a,p} (\R^d,\R^\ell_*)\|
	\end{equation*}
	holds for all $u \in   \calk^{m}_{a,p} (\R^d,\R^\ell_*)$ and $v \in   \calk^{m}_{0,\infty} (\R^d,\R^\ell_*)$. 
\end{prop}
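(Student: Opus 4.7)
My plan is to avoid the dyadic decomposition machinery used in Theorem \ref{mult1} entirely: unlike the case of two factors both in $\calk^m_{a,p}$, here one factor is in $\calk^m_{0,\infty}$, which already supplies a pointwise $L_\infty$-estimate with the correct weight. So the proof should reduce to a direct application of the Leibniz rule.

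First, I would use the weak Leibniz formula to write, for any multi-index $\alpha$ with $|\alpha| \le m$,
\[
\partial^\alpha (u\cdot v)(x) = \sum_{\beta \le \alpha}\binom{\alpha}{\beta}\,\partial^\beta u(x)\,\partial^{\alpha-\beta}v(x)\, .
\]
Multiplying by $\rho(x)^{|\alpha|-a}$ and exploiting the trivial additivity $|\alpha|-a = (|\beta|-a) + |\alpha-\beta|$ of the exponent, I would split the weight as $\rho^{|\alpha|-a}=\rho^{|\beta|-a}\cdot \rho^{|\alpha-\beta|}$, so that each summand factors cleanly into a $u$-piece carrying the weight $\rho^{|\beta|-a}$ and a $v$-piece carrying $\rho^{|\alpha-\beta|}$.

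Next, since $|\alpha-\beta|\le m$, the definition of the $\calk^m_{0,\infty}$-norm gives the pointwise bound $|\rho(x)^{|\alpha-\beta|}\partial^{\alpha-\beta}v(x)|\le \|v\,|\calk^m_{0,\infty}(\R^d,\R^\ell_*)\|$ for almost every $x$. Pulling this $L_\infty$-factor out and integrating the $p$-th power, I obtain
\[
\Bigl\|\rho^{|\alpha|-a}\partial^\alpha(uv)\,\bigl|L_p(\R^d)\Bigr\|
\le c_\alpha\,\|v\,|\calk^m_{0,\infty}(\R^d,\R^\ell_*)\|\sum_{\beta\le \alpha}\Bigl\|\rho^{|\beta|-a}\partial^\beta u\,\bigl|L_p(\R^d)\Bigr\|\,.
\]
Summing over $|\alpha|\le m$ and noting that each term on the right is bounded by $\|u\,|\calk^m_{a,p}(\R^d,\R^\ell_*)\|$ yields the claimed inequality, with $c$ depending only on $m$ and $d$ through the binomial coefficients.

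I do not expect any genuine obstacle: the product of a measurable function with a bounded function is measurable, weak differentiability is preserved under multiplication by $C^m$-functions, and here the $v$ is only required to have bounded weighted derivatives in the $\calk^m_{0,\infty}$-sense, which is precisely what the Leibniz formula needs to go through weakly. The only point that deserves a brief remark is the justification of the weak Leibniz rule: this follows by a standard mollification argument using that $v$ and all its weighted derivatives up to order $m$ are locally bounded on $\R^d\setminus \R^\ell_*$, so that approximation by smooth functions is unproblematic on compact subsets of $\R^d\setminus \R^\ell_*$ and the resulting identity extends by density.
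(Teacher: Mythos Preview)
Your argument is correct and is in fact a cleaner version of the paper's own proof. The paper also uses the Leibniz rule together with the very same weight splitting $\rho^{|\alpha|-a}=\rho^{|\beta|-a}\rho^{|\gamma|}$ (written there in the dyadic form $2^{-j(|\beta|-a)p}2^{-j|\gamma|p}$ on the annuli $D_j$), and then pulls out $\sup_x|\rho(x)^{|\gamma|}\partial^\gamma v(x)|$. The only difference is that the paper first passes to the dyadic decomposition $D_j=\{2^{-j-1}<\rho<2^{-j+1}\}$, replaces $\rho$ by $2^{-j}$ on each annulus, and only at the end reassembles the pieces; you correctly observe that for this particular statement the dyadic layer is unnecessary, since the $L_\infty$-control on $\rho^{|\gamma|}\partial^\gamma v$ is already pointwise. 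What the paper's detour buys is merely notational uniformity with the proofs of Theorems~\ref{mult1}, \ref{mult111}, \ref{mult112}, where the decomposition \emph{is} needed to reduce to an unweighted Sobolev inequality on $D_1$; your direct route is shorter and loses nothing.
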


\begin{proof}
We only need to modify several steps of the proof of  Theorem \ref{mult1}. 
Using Leibniz rule and the fact that the sets 
\[
D_j : =\{x\in \Rd: ~~2^{-j-1}<\rho(x)<2^{-j+1}\},\qquad j\in\N_0, 
\]
have finite overlap, we estimate
\begin{align*}
\intertext{$\|\, u\, \cdot \, v\, |\calk^{m}_{a,p} (\R^d,\R^\ell_*)\|^p$}
&\lesssim  \sum_{j=0}^{\infty}\sum_{|\alpha|\leq m}
\sum_{\alpha=\beta+\gamma}2^{-j(|\beta|-a)p}2^{-j|\gamma|p}\int_{D_j}|\partial^{\beta}u(x)\partial^{\gamma}v(x)|^p \, dx
\\
&\lesssim  \sum_{j=0}^{\infty}\sum_{|\beta|\leq m}\sum_{|\gamma|\leq m}2^{-j(|\beta|-a)p}2^{-j|\gamma|p}
\sup_{x\in D_j}|\partial^{\gamma}v(x)|^p\int_{D_j}|\partial^{\beta}u(x)|^p \, dx
\\
&\leq  \left(\sum_{|\gamma|\leq m}\sup_{j\geq 0}\sup_{x\in D_j}|2^{-j|\gamma|}\partial^{\gamma}v(x)|^p\right)	
	\sum_{j=0}^{\infty}\sum_{|\beta|\leq m} 2^{-j(|\beta|-a)p}	\int_{D_j}|\partial^{\beta}u(x)|^p \, dx
\\
&\lesssim  \left(\sum_{|\gamma|\leq m}\sup_{j\geq 0}\sup_{x\in D_j}|\rho(x)^{|\gamma|}\partial^{\gamma}v(x)|	\right)^p	\|u|\calk^{m}_{a,p} (\R^d,\R^\ell_*)\|^p
\\
&\lesssim  \|v|\calk^{m}_{0,\infty} (\R^d,\R^\ell_*)\|^p\|u|\calk^{m}_{a,p} (\R^d,\R^\ell_*)\|^p\, .
\end{align*}
This proves the claim.		
\end{proof}

In our intended application in \cite{DHSS1}  we can also allow for a shift in the regularity 
parameter $m$ in the pointwise multiplier assertion, i.e., for the product $u\, \cdot \, v$ we 
require less weak derivatives than for $u$ and/or $v$. This aspect leads to the following 
modification.

\begin{cor}\label{mult2}
Let  $m,n \in \N$ and  $0 \le \ell < d$.
\\
{\rm (i)} Let $ \max(1,d/n) < p < \infty$ and $a\geq\frac{d}{p}-n$. Then there exists a constant $c$ s.t.
\begin{equation}\label{eq1}
\|\, u\, \cdot \, v\,  |\calk^{m}_{a,p} (\R^d,\R^\ell_*) \|
\le c \, \|\, v\, |\calk^{m+n}_{a+n,p} (\R^d,\R^\ell_*)\| \, \|\, u\,  |\calk^{m}_{a,p} (\R^d,\R^\ell_*)\|
\end{equation}
holds for all $u \in   \calk^{m}_{a,p} (\R^d,\R^\ell_*)$ and $v \in   \calk^{m+n}_{a+n,p} (\R^d,\R^\ell_*)$. 
\\
{\rm (ii)} Let $p = 1$,   $ d \le n$, and $a\geq {d}-n$. Then 
\eqref{eq1} is true as well.  	
\end{cor}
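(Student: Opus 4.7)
The plan is to deduce this corollary by combining the pointwise multiplier estimate of Proposition \ref{mult2a} with the continuous Sobolev-type embedding into $\calk^m_{0,\infty}$ furnished by Theorem \ref{sobolevinfty}. More precisely, Proposition \ref{mult2a} already provides
\[
\|\, u\, \cdot \, v\,  |\calk^{m}_{a,p} (\R^d,\R^\ell_*)\|
	\lesssim \|\, v\, |\calk^{m}_{0,\infty} (\R^d,\R^\ell_*)\| \, \|\, u\,  |\calk^{m}_{a,p} (\R^d,\R^\ell_*)\|,
\]
so the only thing left is to bound the $\calk^{m}_{0,\infty}$-norm of $v$ by its $\calk^{m+n}_{a+n,p}$-norm.

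To this end, I would invoke Theorem \ref{sobolevinfty} with source space $\calk^{m+n}_{a+n,p}$ and target space $\calk^{m}_{0,\infty}$. For part (i), with $1<p<\infty$, the conditions required by Theorem \ref{sobolevinfty}(i) are
\[
(m+n)-\frac{d}{p}>m \qquad\text{and}\qquad (a+n)-\frac{d}{p}\ge 0,
\]
which simplify to $n>d/p$ (equivalently $p>d/n$) and $a\ge d/p-n$, matching exactly the hypotheses of (i). For part (ii), with $p=1$, Theorem \ref{sobolevinfty}(ii) requires
\[
(m+n)-d\ge m \qquad\text{and}\qquad (a+n)-d\ge 0,
\]
i.e.\ $n\ge d$ and $a\ge d-n$, again matching (ii). Thus in either case
\[
\|\, v\, |\calk^{m}_{0,\infty}(\R^d,\R^\ell_*)\| \;\lesssim\; \|\, v\, |\calk^{m+n}_{a+n,p}(\R^d,\R^\ell_*)\|,
\]
and combining with the estimate from Proposition \ref{mult2a} yields \eqref{eq1}.

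There is no real obstacle here: the statement is essentially a bookkeeping consequence of two results already established. The only point requiring care is verifying that the embedding parameters in Theorem \ref{sobolevinfty} are precisely those built into the hypotheses on $p$, $n$, $a$, and in particular that the strict inequality $n>d/p$ in case (i) corresponds to the strict bound $p>d/n$ (which is why $p=1$ must be treated separately via the $p=1$ branch of Theorem \ref{sobolevinfty}). Once this identification is made, the proof reduces to a one-line chaining of the two cited estimates.
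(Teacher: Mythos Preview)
Your proposal is correct and follows essentially the same approach as the paper: combine Proposition \ref{mult2a} with the Sobolev-type embedding $\calk^{m+n}_{a+n,p}\hookrightarrow\calk^{m}_{0,\infty}$, then check that the hypotheses on $p$, $n$, $a$ match the embedding conditions. The only minor point is that the precise reference for the embedding on $(\R^d,\R^\ell_*)$ is Proposition \ref{lemma-embedding-sobolevinfty} rather than Theorem \ref{sobolevinfty}, since the latter is stated for bounded domains of generalized polyhedral type; the conditions, however, are identical.
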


\begin{proof}
In view of Proposition \ref{mult2a} it is enough to apply 
$\calk^{m+n}_{a+n,p}(\R^d,\R^\ell_*) \hookrightarrow \calk^{m}_{0,\infty} (\R^d,\R^\ell_*)$, which by  Corollary \ref{sobolevinfty} is the case if    
\[
m+n -\frac dp>	m, \qquad \text{i.e.,}\qquad n >\frac dp,\qquad \text{and}\qquad a+n -\frac dp\geq 0. 
\]
By our assumptions on $p$ and $a$ the proof is complete. 	
\end{proof}

\begin{rem}
 \rm
(i)
Observe, that in Corollary \ref{mult2} we do not need that $m$ is large. We only use that $a$ is sufficiently large. 
\\
(ii) A simple reformulation yields in case $n=2$
\begin{equation}\label{eq2}
		\|\, u\,\cdot \,  v\,  |\calk^{m-1}_{a-1,p} (\R^d,\R^\ell_*)
			\|\le c \, \|\, v\, |\calk^{m+1}_{a+1,p} (\R^d,\R^\ell_*)\| \, \|\, u\,  |\calk^{m-1}_{a-1,p} (\R^d,\R^\ell_*)\|
\end{equation}
if $m \in \N$,   $\max (1,d/2) <p< \infty $, and $a\ge \frac dp -1$.
\end{rem}



\subsection{Pointwise multiplication in Kondratiev spaces on domains of generalized polyhedral type}


In order to shift the results on multiplication  obtained for $\calk^{m}_{a,p} (\R^d,\R^\ell_*)$ to 
$\calk^{m}_{a,p} (D,M)$ with $(D,M)$ being a pair of generalized polyhedral type, we proceed as in case of the continuous embeddings.
As a first step we employ Proposition \ref{extension} and conclude that all sufficient conditions in  Subsection \ref{pm1}
remain sufficient conditions for Kondratiev spaces on pairs $(D,M)$ as in Cases I-III.
Next we use Lemma \ref{deco2} to show the same for Kondratiev spaces on polyhedral cones.
Now we are ready to prove sufficiency also for Kondratiev spaces on domains of  polyhedral type by making use of  Lemma \ref{deco}.
Finally, we apply Lemma \ref{deco3} to establish sufficiency for pairs of generalized  polyhedral type.

\begin{satz}\label{very general}
Theorem \ref{mult1}, Corollary \ref{mult11}(i),  Corollary \ref{lem-mult1}, Theorem \ref{mult111}, Proposition \ref{mult113},
Theorem \ref{mult112}, Corollary \ref{power1}, Theorem \ref{thm-mult11}, Proposition \ref{mult2a}, and 
Corollary  \ref{mult2} carry over to Kondratiev spaces with respect to pairs $(D,M) $ of generalized polyhedral type. 
\end{satz}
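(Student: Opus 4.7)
The plan is to upgrade each model-case estimate from Subsection \ref{pm1} through the four-step hierarchy outlined in the introduction, mirroring the argument used for the continuous embeddings in Theorem \ref{sobolev}. It is enough to explain the upgrade for a generic bilinear estimate of the form
\[
\|uv\,|\calk^{m_3}_{a_3,p_3}(D,M)\|
 \lesssim \|u\,|\calk^{m_1}_{a_1,p_1}(D,M)\|\,\|v\,|\calk^{m_2}_{a_2,p_2}(D,M)\|\,;
\]
the Moser-type estimate \ref{thm-mult11} and the power assertions of Corollary \ref{power1} follow by the same scheme, since the $L_\infty$ norm is invariant under diffeomorphisms and restrictions and since the power estimates are iterated applications of the bilinear case.

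For Cases I--III I apply the Stein extension operator of Proposition \ref{extension} to lift $u,v$ to $\calk^{m_i}_{a_i,p_i}(\R^d,\R^\ell_*)$, invoke the relevant model-case theorem of Subsection \ref{pm1} there, and restrict back; restriction is norm-nonincreasing for Kondratiev spaces, since the weight function $\rho$ is the same and the integration domain only shrinks, and the continuity of $\mathfrak{E}$ closes the estimate. For Case IV I insert Lemma \ref{deco2}: the equivalent norm on $\calk^{m_3}_{a_3,p_3}(Q,M)$ splits into a smooth-cone contribution and finitely many Case~II contributions, the preceding step applies on each summand, and Lemma \ref{deco2}(i), applied separately to $u$ and to $v$, reassembles the source norms.

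For domains of polyhedral type I invoke the partition of unity $(\varphi_i)_{i\in\Lambda}$ from Lemma \ref{deco} together with a secondary family $(\tilde{\varphi}_i)\subset C_0^\infty(U_i)$ satisfying $\tilde{\varphi}_i\equiv 1$ on $\supp\varphi_i$, so that $(uv)\varphi_i=\varphi_i\cdot(u\tilde{\varphi}_i)\cdot(v\tilde{\varphi}_i)$. Since both $\varphi_i$ and $\tilde{\varphi}_i$ are bounded pointwise multipliers on every Kondratiev space (basic property listed in Subsection \ref{sub1}), Lemma \ref{diffeo} transfers the two factors $u\tilde{\varphi}_i$ and $v\tilde{\varphi}_i$ to the model case attached to $\eta_i$ with norms dominated by $\|u\|$ and $\|v\|$ over $(D,S)$; the first two stages then produce the bilinear estimate on $\eta_i(U_i\cap D)$. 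Pulling back via $\eta_i^{-1}$ and absorbing the multiplier norm of $\varphi_i$ controls $\|(uv)\varphi_i\circ\eta_i^{-1}\,|\calk^{m_3}_{a_3,p_3}(\text{model}_i)\|$, and since $\Lambda$ is finite, Lemma \ref{deco}(ii) assembles $uv\in\calk^{m_3}_{a_3,p_3}(D,S)$ with the desired bound. Finally, Lemma \ref{deco3} treats pairs $(D,M)$ of generalized polyhedral type by the same local decomposition argument used for Case IV, patched with the polyhedral-type result on the complementary subregions, as illustrated by example $D_7$.

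The principal technical nuisance appears at the partition-of-unity stage: the double cut-off $(\varphi_i,\tilde{\varphi}_i)$ must be introduced so that the two factors entering the model-case product estimate are compactly supported inside $U_i$ and therefore, after composition with $\eta_i$, admit a legitimate Stein extension, while $\varphi_i$ simultaneously delivers the decomposition demanded by Lemma \ref{deco}(ii). All intermediate constants depend only on the fixed cover $(U_i)$, the fixed diffeomorphisms $\eta_i$, and finitely many derivatives of the cut-offs, and are consequently absorbed in the hidden constants throughout.
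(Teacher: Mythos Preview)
Your proof is correct and follows exactly the four-step hierarchy the paper itself uses: Proposition~\ref{extension} for Cases I--III, Lemma~\ref{deco2} for Case~IV, Lemma~\ref{deco} for domains of polyhedral type, and Lemma~\ref{deco3} for the generalized case. The paper's own argument is extremely terse (it is the paragraph immediately preceding the statement), and your version simply fills in the details; in particular, the double cut-off $(\varphi_i,\tilde\varphi_i)$ you introduce at the partition-of-unity stage is a clean way to ensure both factors land in the local model space, though one could equally well write $(uv)\varphi_i=(u\circ\eta_i^{-1})\cdot\bigl((v\varphi_i)\circ\eta_i^{-1}\bigr)$ and avoid it.
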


Also Corollary \ref{mult11}(ii) has a counterpart, but restricted to spaces on smooth cones.

\begin{cor}\label{mult1111}
Let $1\le p < \infty$, $m \in \N$, and  $a \in \R$. Furthermore, let $K$ be the smooth cone from Case I.
\\
The Kondratiev space $\calk^{m}_{a,p} (K,\{0\})$ is an algebra with respect to pointwise multiplication
if, and only if, $a\ge\frac{d}{p}$ with   either $1<p<\infty$ and  $m>d/p$ or $p=1$ and $m\ge d$. 
\end{cor}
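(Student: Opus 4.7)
The ``if'' direction is immediate from Theorem~\ref{very general}, which extends Corollary~\ref{mult11}(i) to Kondratiev spaces on any pair of generalized polyhedral type; the truncated smooth cone $(K,\{0\})$ of Case~I is such a pair.

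\textbf{Necessity of the unweighted Sobolev algebra condition.} Assume $\calk^m_{a,p}(K,\{0\})$ is an algebra. Pick any ball $B=B_\varepsilon(x_0)$ with $\overline B\subset K\setminus\{0\}$, so that on $B$ the weight $\rho(x)=\min(1,|x|)$ is pinched between two positive constants. For every $u\in C_0^\infty(B)$ one has $\|u|W^m_p(B)\|\sim \|u|\calk^m_{a,p}(K,\{0\})\|$ with constants depending only on $B$, and $u$ extended by zero is an admissible element of $\calk^m_{a,p}(K,\{0\})$. Thus the algebra property on $\calk^m_{a,p}(K,\{0\})$ forces the classical Sobolev algebra property on $W^m_p(B)$, which by \cite[Thm.~5.23]{Ada} is equivalent to $m>d/p$ if $1<p<\infty$ or $m\ge d$ if $p=1$.

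\textbf{Necessity of $a\ge d/p$.} I argue by contradiction in complete analogy with the proof of Corollary~\ref{mult11}(ii). Fix $\psi\in C^\infty(\overline K)$ supported in $K\cap B_{3/4}(0)$ with $\psi\equiv1$ on $K\cap B_{1/2}(0)$, and for $b\in\mathbb R$ put $f_b(x):=|x|^b\psi(x)$. The analogue of Lemma~\ref{drei} on $K$---proved by exactly the same calculation, since on $K$ the regularized distance $\tilde\varrho$ is equivalent to $|x|$ near the vertex and integration in spherical coordinates $x=r\omega$ separates cleanly---gives
\[
f_b\in\calk^m_{a,p}(K,\{0\})\iff a-b<\tfrac{d}{p},\qquad f_b^2\in\calk^m_{a,p}(K,\{0\})\iff a-2b<\tfrac{d}{p}\,.
\]
If $a<d/p$, the interval $\bigl(a-\tfrac{d}{p},\tfrac12(a-\tfrac{d}{p})\bigr)$ is nonempty and consists of negative numbers; any $b$ from it produces $f_b\in\calk^m_{a,p}(K,\{0\})$ while $f_b\cdot f_b\notin\calk^m_{a,p}(K,\{0\})$, contradicting the algebra property.

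\textbf{Main obstacle.} The only non-routine point is the cone-version of Lemma~\ref{drei}, i.e.\ the precise characterization of membership of $f_b$ and $f_b^2$ in $\calk^m_{a,p}(K,\{0\})$. This reduces to bounding $\partial^\alpha(|x|^b\psi(x))$ by $C_\alpha|x|^{b-|\alpha|}$ (for $x$ near $0$) and then testing convergence of $\int_0^{1}r^{(b-|\alpha|+|\alpha|-a)p+d-1}\,dr=\int_0^1 r^{(b-a)p+d-1}\,dr$, which converges exactly when $(b-a)p+d>0$, i.e.\ $a-b<d/p$; replacing $b$ by $2b$ handles $f_b^2$. Making this uniform over all $|\alpha|\le m$ and matching it to the lower bound (using that $\psi$ is bounded below on a full cap of the cone near $0$) is the only technical step, but is entirely parallel to the Euclidean model case already treated.
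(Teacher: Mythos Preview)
Your proof is correct and follows essentially the same route the paper intends: sufficiency via Theorem~\ref{very general}, necessity of the Sobolev algebra condition by localizing away from the vertex, and necessity of $a\ge d/p$ via test functions of the form $\tilde\varrho^b$ (or equivalently $|x|^b$) as in Corollary~\ref{mult11}(ii). The one place where you do slightly more work than necessary is your ``main obstacle'': the cone analogue of Lemma~\ref{drei} that you sketch is already recorded in the paper as Lemma~\ref{zwei}(i), so you can simply cite it rather than reproving the membership criterion by hand.
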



\section{Appendix - concrete examples}


The following three lemmas are based on straightforward
elementary calculations. Therefore all details are omitted. 

\begin{lem}\label{eins}
 Let $1\le p \le \infty$, $a\in \R$, and $m \in \N_0$.
 \\
 (i) Let $K$ be a smooth cone as in Case I.
 Then $1 \in \calk^{m}_{a,p}(K,\{0\})$ if, and only if, either $a < d/p$ for $p < \infty$ or if 
 $a\le 0$ for $p=\infty$.
 \\
 (ii) Let $P$ be a specific nonsmooth cone as in Case II.
 Then $1 \in \calk^{m}_{a,p}(P,\Gamma)$ if, and only if, either $a < (d-1)/p$ for $p < \infty$ or if 
 $a\le 0$ for $p=\infty$.
 \\
 (iii) Let $I$ be a specific dihedral domain as in Case III.
 Then $1 \in \calk^{m}_{a,p}(I,M_\ell)$ if, and only if, either $a < (d-\ell)/p$ for $p < \infty$ or if 
 $a\le 0$ for $p=\infty$.
 \\
 (iv) Let $Q$ be a polyhedral cone  as in Case IV.
 Then $1 \in \calk^{m}_{a,p}(Q,M)$ if, and only if, either $a < 2/p$ for $p < \infty$ or if 
 $a\le 0$ for $p=\infty$.
\end{lem}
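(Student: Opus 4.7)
The decisive simplification is that $\partial^\alpha 1 \equiv 0$ for every $|\alpha| \ge 1$, so the Kondratiev norm of the constant function $1$ collapses to $\|\,\rho^{-a}\,|L_p(\Omega)\|$, independent of $m$. All four claims therefore reduce to characterizing when $\rho^{-a}$ is $p$-integrable (respectively, $L_\infty$-bounded) on the relevant domain.

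The $p = \infty$ assertions are uniform across (i)--(iv): in each setting one has $\rho \le 1$ throughout $\Omega$, while $\inf_\Omega \rho = 0$ since $M$ is nontrivially approached from inside $\Omega$; hence $\sup_\Omega \rho^{-a} < \infty$ precisely when $a \le 0$. For $p < \infty$ with $a \le 0$ the integral is trivially finite because $\rho^{-ap} \le 1$ on the bounded set $\Omega$, so the interesting regime is $a > 0$, in which we must analyse the local volume growth of $\rho^{-ap}$ near $M$.

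My plan is to treat cases (i)--(iii) by explicit coordinate computations and to reduce case (iv) to these via decomposition. For (i), since $K \subset B_1(0)$ and $\rho(x) = |x|$, spherical coordinates at the origin give $\int_K |x|^{-ap}\,dx \sim \int_0^1 r^{d-1-ap}\,dr$, which is finite iff $ap < d$. For (ii), Fubini in the axial variable $x_d$ combined with polar coordinates in the transverse $(d-1)$ variables, noting that the cross-sectional radius at height $x_d$ is proportional to $x_d$ cut off by $1$, reduces the question to an integral of $r^{d-2-ap}$, forcing $ap < d-1$. For (iii), Fubini with $x = (x',x'') \in I' \times I''$ separates the integral into $|I''| \cdot \int_{I'}|x'|^{-ap}\,dx'$, and the standard integrability criterion for $|x'|^{-ap}$ in $\Rd^{d-\ell}$ yields $ap < d-\ell$. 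Necessity in each of these cases is obtained by observing that the same integral, restricted to a small neighborhood of any point of $M$, already diverges when the stated strict inequality fails.

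The main obstacle is case (iv), where edge and vertex singularities interact. I would invoke the decomposition from Lemma~\ref{deco2}, writing $Q = \widetilde K \cup P_1 \cup \cdots \cup P_n$ with $\widetilde K$ a smooth-cone neighborhood of the vertex and each $P_j$ a nonsmooth cone along the edge $M_j$. On $P_j$ one has $\rho \asymp \dist(\cdot,M_j)$, so case (ii) (with $d = 3$, transverse dimension $2$) supplies the condition $ap < 2$. On $\widetilde K$ I would exploit the scaling $\dist(t\omega,M) = t\,\dist(\omega,M)$: spherical coordinates give
\[
\int_{\widetilde K} \rho^{-ap}\,dx \sim \int_0^1 t^{2-ap}\,dt \cdot \int_{\widetilde K \cap S^2} \dist(\omega,M)^{-ap}\,d\omega,
\]
whose radial part converges iff $ap < 3$, while the angular part, a two-dimensional integral of the distance to the finitely many points where the edges meet $S^2$, converges iff $ap < 2$. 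The binding constraint is $ap < 2$, matching the edge contribution, so integrability over $Q$ holds exactly when $a < 2/p$. Necessity follows by localizing either near an interior point of an edge or near the vertex and observing the divergence of the corresponding integrals.
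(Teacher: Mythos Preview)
Your proposal is correct and is exactly the kind of direct computation the paper has in mind; the paper itself omits the proof entirely, stating only that it follows from ``straightforward elementary calculations.''

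One small imprecision in your treatment of (iv): by the construction preceding Lemma~\ref{deco2}, the set $\widetilde K\cap S^2$ is bounded away from the points where the edges meet $S^2$ (indeed $\widetilde K\subset Q\setminus\bigcup_j\overline{\widetilde P_j}$), so $\dist(\omega,M)$ is bounded above and below by positive constants on $\widetilde K\cap S^2$. The angular integral is therefore a finite positive constant for every $a$, and the condition coming from $\widetilde K$ is simply $ap<3$ from the radial factor. This does not affect your conclusion, since the edge pieces $P_j$ already impose the stronger constraint $ap<2$.
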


\begin{rem}
 \rm
 The case of a smooth cone is considered in \cite[Lemma 2.5]{CDN}.
\end{rem}

Let $\tilde{\varrho}$ denote the regularized distance function. Then, for any $b \in \R$, the mapping 
$T_b : ~ u \mapsto \tilde{\varrho}^b \, u $ yields an isomorphism  of $\calk^m_{a,p}(\Omega,M)$ onto $\calk^m_{a+b,p}(\Omega,M)$. 
As an immediate conclusion 
of Lemma \ref{eins} we obtain the following.

\begin{lem}\label{zwei}
 Let $1\le p \le \infty$, $a, b\in \R$, $0 \le l<d $, and $m \in \N_0$.
 \\
 (i) Let $K$ be a smooth cone as in Case I.
 Then $\tilde{\varrho}^b \in \calk^{m}_{a+b,p}(K,\{0\})$ if, and only if, either $a < d/p$ for $p < \infty$ or if 
 $a\le 0$ for $p=\infty$.
 \\
 (ii) Let $P$ be a specific nonsmooth cone as in Case II.
 Then $\tilde{\varrho}^b \in \calk^{m}_{a+b,p}(P,\Gamma)$ if, and only if, either $a < (d-1)/p$ for $p < \infty$ or if 
 $a\le 0$ for $p=\infty$.
 \\
 (iii) Let $I$ be a specific dihedral domain as in Case III.
 Then $\tilde{\varrho}^b \in \calk^{m}_{a+b,p}(I,M_\ell)$ if, and only if, either $a < (d-\ell)/p$ for $p < \infty$ or if 
 $a\le 0$ for $p=\infty$.
 \\
 (iv) Let $Q$ be a polyhedral cone  as in Case IV.
 Then $\tilde{\varrho}^b \in \calk^{m}_{a+b,p}(Q,M)$ if, and only if, either $a < 2/p$ for $p < \infty$ or if 
 $a\le 0$ for $p=\infty$.
\end{lem}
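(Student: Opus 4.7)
The plan is to deduce Lemma \ref{zwei} as an immediate corollary of Lemma \ref{eins} by invoking the isomorphism property of multiplication by powers of the regularized distance function $\tilde{\varrho}$, which was already recorded among the basic properties of Kondratiev spaces in Subsection \ref{sub1}. Specifically, for any $b \in \R$ the operator $T_b: u \mapsto \tilde{\varrho}^b u$ is an isomorphism from $\calk^m_{a,p}(\Omega,M)$ onto $\calk^m_{a+b,p}(\Omega,M)$.

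First, I would observe that applying this isomorphism to the constant function $u \equiv 1$ gives the equivalence
\[
1 \in \calk^m_{a,p}(\Omega,M) \iff \tilde{\varrho}^b = T_b(1) \in \calk^m_{a+b,p}(\Omega,M),
\]
valid for any of the four concrete configurations $(\Omega,M)$ appearing in Cases I--IV. Thus the membership question for $\tilde{\varrho}^b$ in the shifted Kondratiev space is completely reduced to the membership question for $1$ in the unshifted Kondratiev space with the same parameter $a$.

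Next I would simply invoke parts (i)--(iv) of Lemma \ref{eins} case by case. For the smooth cone this yields the condition $a < d/p$ (or $a \le 0$ if $p = \infty$); for the specific nonsmooth cone of Case II the condition $a < (d-1)/p$; for the specific dihedral domain of Case III the condition $a < (d-\ell)/p$; and for the polyhedral cone of Case IV the condition $a < 2/p$. In each case the condition transfers verbatim to $\tilde{\varrho}^b \in \calk^m_{a+b,p}$ via the equivalence above.

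There is essentially no obstacle here: the work was already done when constructing the isomorphism $T_b$ (which relies on the estimates for $\tilde\varrho$ and its derivatives recalled in the list of basic properties) and when establishing Lemma \ref{eins}. The only point worth double-checking is that the isomorphism property is genuinely bijective in both directions, so that the \emph{only if} statements transfer as well; this is clear since $T_{-b} \circ T_b = \mathrm{id}$ by construction of $\tilde\varrho$ (strict positivity of $\tilde\varrho$ on $\Omega$ makes $\tilde{\varrho}^{-b}$ well-defined and smooth in $\Omega$). Hence the lemma follows with no additional computation.
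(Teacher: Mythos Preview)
Your proposal is correct and matches the paper's approach exactly: the paper states immediately before Lemma \ref{zwei} that it is ``an immediate conclusion of Lemma \ref{eins}'' via the isomorphism $T_b: u \mapsto \tilde{\varrho}^b u$, which is precisely your argument.
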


In our model case $(\Rd, \R^{\ell}_\ast)$ we need a modification.
Let $\psi $ denote a smooth cut-off function, i.e., $\psi (x)=1$, $|x|\le 1$, and $ \psi (x)=0 $ if $|x|\ge 3/2$.

\begin{lem}\label{drei}
 Let $1\le p \le \infty$, $a, b\in \R$, $0 \le \ell <d$, and $m \in \N_0$.
   Then $\tilde{\varrho}^b\, \cdot \, \psi \in \calk^{m}_{a+b,p}(\Rd, \R^{\ell}_\ast)$ if, and only if, either $a < (d-\ell)/p$ for $p < \infty$ or if 
 $a\le 0$ for $p=\infty$.
\end{lem}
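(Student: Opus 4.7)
The plan is to reduce immediately to the case $b=0$ and then inspect the defining integrals directly on the support of $\psi$. By the isomorphism $T_b: u\mapsto \tilde{\varrho}^b u$ from $\calk^{m}_{a,p}(\Rd,\R^\ell_*)$ onto $\calk^{m}_{a+b,p}(\Rd,\R^\ell_*)$ recorded among the basic properties in Subsection \ref{sub1}, the membership $\tilde{\varrho}^b\psi\in\calk^{m}_{a+b,p}(\Rd,\R^\ell_*)$ is equivalent to $\psi\in\calk^{m}_{a,p}(\Rd,\R^\ell_*)$. Hence I would focus on determining for which $(a,p)$ the smooth cutoff $\psi$ itself lies in $\calk^{m}_{a,p}(\Rd,\R^\ell_*)$. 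Writing $x=(x',x'')\in\R^{d-\ell}\times\R^\ell$ and using the explicit weight $\rho(x)=\min(1,|x'|)$ from \eqref{eq-weight-model}, everything reduces to an elementary computation.

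For $1\le p<\infty$ I would split the sum in the defining norm according to whether $\alpha=0$ or $|\alpha|\ge 1$. Since $\psi\equiv 1$ on $\{|x|\le 1\}$ and $\supp\psi\subset\{|x|\le 3/2\}$, the $\alpha=0$ integral equals, up to a bounded contribution from the region $1<|x|<3/2$ where $\rho\ge c>0$, a constant multiple of
\[
\int_{|x'|\le 1,\,|x''|\le 1}|x'|^{-ap}\,dx'\,dx''\,,
\]
which by Fubini and a polar decomposition in $x'$ is finite precisely when $a<(d-\ell)/p$. For $|\alpha|\ge 1$, the derivative $\partial^\alpha\psi$ is supported in the compact annulus $\{1\le|x|\le 3/2\}$, so the corresponding integral is controlled by $\int_{|x'|\le 3/2}|x'|^{(|\alpha|-a)p}\,dx'$ (times a bounded $x''$-factor), and is finite whenever $a<|\alpha|+(d-\ell)/p$. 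Since this is strictly weaker than the $\alpha=0$ condition, sufficiency and necessity both reduce to $a<(d-\ell)/p$, the lower bound in the necessity coming from the same integral diverging on any neighbourhood of $\R^\ell_*\cap\{|x|\le 1\}$ when $a\ge(d-\ell)/p$.

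For $p=\infty$ the same decomposition applies with essential suprema in place of integrals. The $\alpha=0$ term demands that $\rho(x)^{-a}$ be essentially bounded on a neighbourhood of $\R^\ell_*\cap\{|x|<1\}$, where $\psi\equiv 1$; since $\rho(x)=|x'|$ attains arbitrarily small positive values there, this forces $-a\ge 0$, i.e.\ $a\le 0$. For $|\alpha|\ge 1$ one needs $\rho^{|\alpha|-a}\partial^\alpha\psi\in L_\infty$, which is automatic once $|\alpha|-a\ge 0$ and hence is implied by $a\le 0$. Again the $\alpha=0$ threshold is the only binding one.

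The hard part is not really analytic but combinatorial: one has to track which multi-index produces the critical condition and verify that, for $|\alpha|\ge 1$, the support of $\partial^\alpha\psi$ avoids the portion of $\R^\ell_*$ on which $\psi\equiv 1$, so that the higher-order terms are systematically less restrictive than the $\alpha=0$ term. Beyond the isomorphism $T_b$ and the standard one-dimensional estimate for $\int_0^r s^{sp+(d-\ell)-1}\,ds$, no further tools are needed.
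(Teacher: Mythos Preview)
Your reduction via the isomorphism $T_b$ to the case $b=0$, followed by direct inspection of the $\alpha=0$ and $|\alpha|\ge 1$ contributions, is precisely the elementary calculation the paper has in mind (the paper omits all details for this lemma).

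One slip is worth correcting: for $\ell\ge 1$ it is \emph{not} true that $\rho\ge c>0$ on the annulus $\{1<|x|<3/2\}$, since points with $|x'|$ arbitrarily small and $|x''|$ close to $1$ lie in this annulus and on these $\rho(x)=|x'|$. Hence the annular part of the $\alpha=0$ integral is not a bounded remainder a priori. The fix is immediate and you essentially already have it: bound that piece by $\|\psi\|_\infty^p\int_{|x'|\le 3/2}\min(1,|x'|)^{-ap}\,dx'$ times a finite $x''$-factor, exactly as you (correctly) handle the $|\alpha|\ge 1$ terms. This converges under the same condition $a<(d-\ell)/p$, so the conclusion is unaffected. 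Your treatment of the $|\alpha|\ge 1$ terms and of the $p=\infty$ case is fine.
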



\bigskip
\vbox{\noindent Stephan Dahlke\\
Philipps-Universit\"at Marburg\\
FB12 Mathematik und Informatik\\
Hans-Meerwein Stra\ss e, Lahnberge\\
35032 Marburg\\
Germany\\
E-mail: {\tt dahlke@mathematik.uni-marburg.de}\\
WWW: {\tt http://www.mathematik.uni-marburg.de/$\sim$dahlke/}\\}

\bigskip
\smallskip
\vbox{\noindent  Markus Hansen\\
Philipps-Universit\"at Marburg\\
FB12 Mathematik und Informatik\\
Hans-Meerwein Stra\ss e, Lahnberge\\
35032 Marburg\\
Germany\\
E-mail: {\tt hansen@mathematik.uni-marburg.de}\\
WWW: {\tt http://www.mathematik.uni-marburg.de/$\sim$hansen/}\\}

\bigskip
\smallskip
\vbox{\noindent  Cornelia Schneider\\
Friedrich-Alexander-Universit\"at Erlangen-N\"urnberg\\ 
Department Mathematik, AM3\\
Cauerstr. 11\\ 
91058 Erlangen \\ 
Germany\\
E-mail: {\tt schneider@math.fau.de}\\
WWW: {\tt http://www.math.fau.de/$\sim$schneider/}\\}

\bigskip
\smallskip
\vbox{\noindent  Winfried Sickel\\
Friedrich-Schiller-Universit\"at Jena\\ 
Mathematisches Institut\\
Ernst-Abbe-Platz 2\\ 
07743 Jena \\ 
Germany\\
E-mail: {\tt winfried.sickel@uni-jena.de}\\
WWW: {\tt https://users.fmi.uni-jena.de/$\sim$sickel/}\\}

\bigskip


\begin{thebibliography}{50000}



\bibitem{Ada} 
R. A.~Adams (1978): {\em Sobolev spaces.}  Academic Press, Boston.



\bibitem{AF03} 
R. A.~Adams and J. J. Fournier (2003): {\em Sobolev spaces.} Academic Press, Boston.



\bibitem{BabuskaGuo}
{I.~Babu\v{s}ka and  B.~Guo} (1997):
			{Regularity of the solutions for elliptic problems on nonsmooth domains in $\R^3$, Part I:
				countably normed spaces on polyhedral domains}.
			{\em Proc. Roy. Soc. Edinburgh Sect. A}, {\bf 127}, 77--126.


\bibitem{BMNZ}
		{C.~Bacuta, A.~Mazzucato, V.~Nistor, and  L.~Zikatanov} (2010):
			{Interface and mixed boundary value problems on $n$-dimensional polyhedral domains}.
			{\it Doc. Math.} {\bf 15}, 687--745.



\bibitem{BIN78}
O.~V. Besov, V.~P. Il'in, and S.~M. Nikol'skij (1979): {\em Integral
  representations of functions and imbedding theorems. {V}ol. {I+II}}, {V. H.
  Winston \& Sons}, Washington, D.C..


\bibitem{CDN}
M.~Costabel, M.~Dauge, and S.~Nicaise (2010):
Mellin analysis of weighted Sobolev spaces with nonhomogeneous norms on cones. In: Around the research of {V}ladimir {M}az'ya. {I}, {\em Int. Math. Ser.} \textbf{11},   105--136, Springer, New York.  


\bibitem{CS}
G.~M.~Constantine, T.~H.~Savits (1996):
A multivariate Faa di Bruno formula with applications.
{\it Trans. Amer. Math. Soc.} {\bf 348}(2), 503--520.

\bibitem{CDN2}
M.~Costabel, M.~Dauge, and S.~Nicaise (2012):
Analytic regularity for linear elliptic systems in polygons and polyhedra. 
{\em Math. Models Methods Appl. Sci. } \textbf{22}, 1250015 (2012) [63 pages]. 
https://doi.org/10.1142/S0218202512500157.




\bibitem{DD} 
 S.~Dahlke and  R.~DeVore (1997):  
 Besov regularity for elliptic boundary value problems.
 \emph{Commun. Partial Differ. Equations} \textbf{22}, 1--16.


\bibitem{DHSS1} 
 S.~Dahlke, M.~Hansen, C.~Schneider, and W.~Sickel (2018):  
{On Besov Regularity of Solutions to  Nonlinear Elliptic Partial Differential Equations}.  
 \emph{Bericht Mathematik Nr. 2018-04}, Fachbereich Mathematik und Informatik, Philipps University Marburg. 


\bibitem{DahlkeSickel}
 S.~Dahlke and  W.~Sickel (2009):
 Besov regularity for the Poisson equation in smooth and polyhedral cones.  
 In: ``Sobolev spaces in mathematics II. Applications in analysis and partial differential equations" (V.~Mazya, Ed.), 
 \emph{Int.\ Math.\ Ser.} \textbf{9}, 123--145. Springer; Tamara Rozhkovskaya Publisher, Novosibirsk. 

\bibitem{DahlkeSickel2}
 S.~Dahlke and  W.~Sickel (2013):
 On {B}esov regularity of solutions to nonlinear elliptic partial differential equations.
  \emph{Rev. Mat. Complut.} \textbf{26}, no.~1, 115--145.

	
\bibitem{Dauge}
{M.~Dauge} (1988):
{\em Elliptic boundary value problems in corner domains, smoothness and asymptotics of solutions}.
Lecture Notes in Mathematics {\bf 1341}, Springer, Berlin.


\bibitem{dob10}
M.~Dobrowolski (2010): {\em Angewandte Funktionalanalysis. }
{ Springer-Lehrbuch Masterclass}, Springer, Berlin.

\bibitem{EdmundsEvans}
D.~Edmunds and W.~Evans (1986): {\em Spectral Theory and Differential Operators.}
{Oxford mathematical monographs}, Oxford University Press, Oxford.



\bibitem{Gr85} 
P.~Grisvard (1985): {\it Elliptic Problems in Nonsmooth Domains}. 
Pitman, Boston.


\bibitem{Gr1} 
P.~Grisvard (1992): {\it Singularities in Boundary Value Problems}. 
Research Notes in Applied 
Mathematics {\bf 22}, Springer, Berlin.

\bibitem{Guo}
B. Q. Guo (1995): 
The h-p version of the finite element method for solving boundary value problems in polyhedral domains.  
In: Boundary Value Problems and Integral Equations in Nonsmooth Domains,  (M. Costabel, M. Dauge, and S. Nicaise, eds.), Luminy, 1993,  101--120,  Dekker, New York.



\bibitem{Hansen}
{M.~Hansen} (2015):
{Nonlinear Approximation Rates and Besov Regularity for Elliptic PDEs on Polyhedral Domains.} 
{\it Found. Comput. Math.}
{\bf 15}, no.~2, 561--589. 

\bibitem{Hansen2}
{M.~Hansen} (2014):
{New embedding results for Kondratiev spaces and application to adaptive approximation of elliptic PDEs.}
{\em Preprint: SAM-report 2014-30}, Seminar for Applied Mathematics, ETH Z\"urich; to appear in {\em Anal. Appl.}.

\bibitem{Hansen3} 
 M.~Hansen (2017):  
 Refined localization spaces, Kondratiev spaces with fractional smoothness and extension operators.
 \emph{Preprint}. 

\bibitem{HansenScharf}
{M.~Hansen and B.~Scharf} (2017): Relations between Kondratiev spaces and refined localization Triebel-Lizorkin spaces. {\em Preprint}. 





\bibitem{H92} 
W. Hackbusch (1992):
{\it Elliptic Differential Equations: Theory and Numerical  Treatment.} 
Springer, Berlin. 

\bibitem{JK1} 
 D.~Jerison and C. E.~Kenig (1995):
 The inhomogeneous Dirichlet problem in Lipschitz domains. 
 \emph{J.\ Funct.\ Anal.} \textbf{130}, 161--219. 

 


\bibitem{Kon1}
V.~A.~Kondratiev (1967): Boundary value problems for elliptic equations in domains with conical and angular points.
{\em Trudy Moskov. Mat. Obshch.} {\bf 16}, 209-292 (English translation in {\em Trans. Moscow Math. Soc.} {\bf 16}, 227-313).


 
\bibitem{Kon2}
V.~A.~Kondratiev (1970): On the smoothness of solutions of the Dirichlet problem 
for elliptic equations of second order in a neighborhood of an edge. {\em Differ. Uravn.} {\bf 13}, no. 11, 2026-2032.
  

\bibitem{Kon3} 
V.~A.~Kondratiev and O.~A.~Oleinik  (1983):
Boundary-value problems for partial differential equations in non-smooth domains. 
{\em Uspekhi Mat. Nauk} {\bf 38}, no. 2, 3-76 (English translation in {\em Russ. Math. Surv.} {\bf 38}, no. 2, 1-86).


\bibitem{KMR}
{V.~A.~Kozlov, V.~G.~Maz'ya, and J.~Rossmann} (1997):
{\em Elliptic boundary value problems in domains with point singularities}.
American Mathematical Society, Providence,  Rhode Island.





\bibitem{KMR2}
{V.~A.~Kozlov, V.~G.~Maz'ya, and  J.~Rossmann} (2001):
{\em Spectral problems associated with corner singularities of solutions to elliptic equations}.
American Mathematical Society, Providence, Rhode Island.


\bibitem{KO1}
A.~Kufner and B.~Opic (1984): How to define reasonably weighted Sobolev spaces.
{\em Comment. Math. Univ. Carolin.} {\bf 25}, no.~3, 537-554.




\bibitem{KO2}
A.~Kufner and B.~Opic (1986): Some remarks on the definition of weighted Sobolev spaces.
In: {\em Partial differential Equations (Proceedings of an international  conference)}, 120-126, Nauka, Novosibirsk.




\bibitem{Ku}
A.~Kufner and A.-M.~S\"andig (1987): {\em Some applications of weighted Sobolev spaces.} 
Teubner-Texte Math. {\bf 100}, Teubner, Leipzig.





\bibitem{LM}
J. L.~Lions and  E.~Magenes (1972):
{\it Non-Homogeneous Boundary Value Problems and Applications I.}
Springer, Berlin. 


\bibitem{Lo}
S.~V.~Lototsky (2000):
Sobolev spaces with weights in domains and boundary value problems for degenerate elliptic equations.
{\em Methods  Appl. Anal.} {\bf 7}, no. 1, 195--204. 



\bibitem{MS85}
 V.~G.~Maz'ya and T.~O.~Shaposhnikova (1985):
{\em Theory of multipliers in spaces of differentiable functions}.  
 {Monographs and Studies in Mathematics} \textbf{23}, Pitman, Boston.





\bibitem{MS2} 
{V.~G.~Maz'ya and T. O.~Shaposhnikova} (2009): {\em Theory of Sobolev 
multipliers with applications to differential and integral operators}. Springer, Berlin.






\bibitem{MR}
 V.~G.~Maz'ya and J.~Rossmann (2010):
 {\em Elliptic equations in polyhedral domains}.
 {AMS Mathematical Surveys and Monographs} \textbf{162},  Providence, Rhode Island.

\bibitem{NistorMazzucato}
{A.~Mazzucato and V.~Nistor} (2010):
{Well posedness and regularity for the elasticity equation with mixed boundary conditions on
polyhedral domains and domains with cracks}. {\it Arch. Ration. Mech. Anal.} {\bf 195}, 25--73.

\bibitem{Mo} 
{J.~Moser} (1966): {A rapidly convergent iteration method and
non-linear differential equations. I.} 
{\em Ann. Scuola Norm. Sup. Pisa} {\bf 20}, 265--315.
{II.} ibid. {\bf 20}, 499--535.




\bibitem{NP}
S.~A.~Nazarov and  B.~A.~Plamenevskii (1994): {\em Elliptic problems in domains with piecewise smooth boundary.} 
De Gruyter, Berlin.







\bibitem{RS}
 T.~Runst and  W.~Sickel (1996):
 {\em Sobolev spaces of fractional order, Nemytskij operators and nonlinear partial differential equations}. 
 {De Gruyter}, Berlin.

\bibitem{SS1}
	E.~Schrohe and B.-W.~Schulze (1994): Boundary value problems in Boutet de Monvel's algebra for
	manifolds with conical singularities. I.
	In: {\em Advances in Partial Differential Equations
		``Pseudo-differential calculus and mathematical physics''}, 97--209.
	Editors: M.~Demuth, B.-W.~Schulze. Akademie Verlag, Berlin.

\bibitem{SS2}
	E.~Schrohe and B.-W.~Schulze (1995): Boundary value problems in Boutet de Monvel's algebra for
	manifolds with conical singularities. II.
	In: {\em Advances in Partial Differential Equations
		``Boundary value problems, Schr\"odinger operators, Deformation quantization''}, 70--205.
	Editors: M.~Demuth, E.~Schrohe, B.-W.~Schulze. Akademie Verlag, Berlin.


\bibitem{St70}
 E.~M.~Stein (1970): 
 {\em Singular integrals and differentiability properties of functions. }
 {Princeton Univ. Press}, Princeton. 


\bibitem{T78} 
 H.~Triebel (1978):
 {\em Interpolation theory, function spaces, differential operators.} 
 {North-Holland Mathematical Library} \textbf{18}, Amsterdam -- New York. 



\bibitem{zeidler}  
 E.~Zeidler (1990): 
 {\em Nonlinear functional analysis and its applications. II/B: Nonlinear monotone operators. } 
 {Springer}, New York. 


\end{thebibliography}
\end{document}